\newif\ifPDF
\newcommand{\xRightarrow}[2][]{\ext@arrow 0359\Rightarrowfill@{#1}{#2}}
\theoremstyle{plain}
\newtheorem{theorem}{Theorem}[section]				
\newtheorem{proposition}[theorem]{Proposition}		
\newtheorem{corollary}[theorem]{Corollary}
\newtheorem{lemma}[theorem]{Lemma}
\theoremstyle{definition}
\newtheorem{definition}[theorem]{Definition}
\newtheorem{remark}[theorem]{Remark}
\newtheorem{example}[theorem]{Example}
\newcommand{\ov}{\overline}
\newcommand{\CBbb}{\mathbb C}
\newcommand{\GBbb}{\mathbb G}
\newcommand{\QBbb}{\mathbb Q}
\newcommand{\RBbb}{\mathbb R}
\newcommand{\ZBbb}{\mathbb Z}
\newcommand{\Acal}{\mathcal A}
\newcommand{\Ccal}{\mathcal C}
\newcommand{\Dcal}{\mathcal D}
\newcommand{\Hcal}{\mathcal H}
\newcommand{\Lcal}{\mathcal L}
\newcommand{\Mcal}{\mathcal M}
\newcommand{\Ocal}{\mathcal O}
\newcommand{\Pcal}{\mathcal P}
\newcommand{\Scal}{\mathcal S}
\newcommand{\Tcal}{\mathcal T}
\newcommand{\Ucal}{\mathcal U}
\newcommand{\Vcal}{\mathcal V}
\newcommand{\Xcal}{\mathcal X}
\newcommand{\pfrak}{\mathfrak p}
\DeclareMathOperator{\Hom}{Hom}
\DeclareMathOperator{\tr}{tr}
\DeclareMathOperator{\Div}{div}
\DeclareMathOperator{\ord}{ord}
\DeclareMathOperator{\PICRIG}{PICRIG}
\DeclareMathOperator{\PIC}{PIC}
\DeclareMathOperator{\LOG}{LOG}
\DeclareMathOperator{\Gm}{\mathbb{G}_{m}}
\DeclareMathOperator{\Spec}{Spec}
\DeclareMathOperator{\Real}{Re}
\DeclareMathOperator{\Imag}{Im}
\newcommand{\dbar}{\bar\partial}
\newcommand{\lra}{\longrightarrow}
\newcommand{\Pic}{{\rm Pic}}
\newcommand{\GM}{{\rm GM}}
\newcommand{\isorightarrow}{\xrightarrow{
   \,\smash{\raisebox{-0.5ex}{\ensuremath{\sim}}}\,}}
\begin{document}

\title[Flat line bundles and the Cappell-Miller torsion in Arakelov geometry]{Flat line bundles and the Cappell-Miller torsion in Arakelov geometry}

\author[Freixas i Montplet]{Gerard Freixas i Montplet}

\address{CNRS -- Institut de Math\'ematiques de Jussieu, 4 Place Jussieu,
   75005 Paris, France}
\email{gerard.freixas@imj-prg.fr}
\thanks{G. F. supported in part by ANR grant ANR-12-BS01-0002.}

\author[Wentworth]{Richard A. Wentworth}

\address{Department of Mathematics,
   University of Maryland,
   College Park, MD 20742, USA}
\email{raw@umd.edu}
\thanks{R.W. supported in part by NSF grant DMS-1406513.}
\thanks{ The authors also acknowledge support from NSF grants DMS-1107452, -1107263, -1107367 ``RNMS: GEometric structures And Representation varieties'' (the GEAR Network).
}
\subjclass[2000]{Primary: 58J52; Secondary: 14C40}

\begin{abstract}
 In this paper, we extend Deligne's functorial Riemann-Roch isomorphism for hermitian holomorphic line bundles on Riemann surfaces  to the case of  flat, not necessarily unitary connections.  The Quillen metric and $\star$-product of Gillet-Soul\'e is replaced  with complex valued logarithms. On the determinant of cohomology side, the idea goes back to Fay's holomorphic extension of determinants of Dolbeault laplacians, and it is  shown here to be  equivalent to the holomorphic Cappell-Miller torsion. On the Deligne pairing side, the logarithm is a refinement of the intersection connections considered in \cite{FreixasWentworth:15}. The construction naturally leads to an Arakelov theory for flat line bundles on arithmetic surfaces and produces arithmetic intersection numbers valued in $\CBbb/\pi i\,  \ZBbb$. In this context we prove an arithmetic Riemann-Roch theorem. This realizes a program proposed by Cappell-Miller to show that the holomorphic torsion exhibits properties similar to those of the Quillen metric proved by Bismut, Gillet and Soul\'e. Finally, we give examples that clarify the kind of invariants that the formalism captures; namely,  periods of differential forms.
\end{abstract}

\maketitle

\thispagestyle{empty}

\section{Introduction}
Arithmetic intersection theory was initiated by Arakelov \cite{Arakelov} in an attempt to approach the Mordell conjecture on rational points of projective curves over number fields by mimicking the successful arguments of the function field case. The new insight was the realization that an intersection theory on arithmetic surfaces could be defined by adding some archimedean information to divisors. This archimedean datum consists of the so-called Green's functions that arise from smooth hermitian metrics on holomorphic line bundles. The use of a metric structure is also natural for diophantine purposes, as one may want to measure the size of integral sections of a line bundle on an arithmetic surface. 

Arakelov's foundational work was  complemented by Faltings, who proved among other things   the first version of an arithmetric Riemann-Roch type formula \cite{Faltings}. Later, in a long collaboration starting with \cite{GS:AIT}, Gillet and Soul\'e vastly extended the theory both to higher dimensions and to more general structures on the archimedean side. Their point of view is an elaboration of the ideas of Arakelov and is cast as a suitable ``completion'' of the usual Chow groups of classical intersection theory over a Dedekind domain. Their formalism includes arithmetic analogues of characteristic classes of hermitian holomorphic vector bundles
\cite{GS:ACC1, GS:ACC2}. This led them to develop and prove a general Grothendieck-Riemann-Roch type theorem in this setting \cite{GS:ARR}. A key ingredient is the  \emph{analytic torsion} of the Dolbeault complex associated to a hermitian holomorphic bundle
over a compact K\"ahler manifold. Their proof requires deep properties of the analytic torsion due to Bismut and collaborators \cite{Bismut-Freed-1, Bismut-Freed-2, BGS1, BGS2, BGS3, Bismut-Lebeau}. In \cite{Deligne:87}, Deligne proposed a program to lift the Grothendieck-Riemann-Roch theorem to a functorial isomorphism between line bundles that  becomes an isometry when vector bundles are endowed with suitable metrics. In the case of families of curves this goal was achieved. He establishes a canonical isometry between the determinant of cohomology of a hermitian vector bundle with the Quillen metric, and some hermitian intersection bundles, involving in particular the so-called \emph{Deligne pairings} of line bundles. 

In our previous work \cite{FreixasWentworth:15}, we produced natural connections on Deligne pairings of line bundles with flat relative  connections on families of compact Riemann surfaces. These were called \emph{intersection connections}, and they recover  Deligne's constructions in the case where the relative connections are the  Chern connections for a hermitian structure.  As in the case of Deligne's formulation, intersection connections are functorial, and via the Chern-Weil expression they realize a natural cohomological relationship for Deligne pairings. Moreover, we showed that in the case of a trivial family of curves, \emph{i.e.} a single Riemann surface and a holomorphic family of flat line bundles on it, we could interpret Fay's holomorphic extension of analytic torsion for flat unitary line bundles \cite{Fay} as the construction of a Quillen type  holomorphic connection on the determinant of cohomology, and as a statement that the Deligne-Riemann-Roch type isomorphism is flat with respect to these connections. The contents of \cite{FreixasWentworth:15} are summarized in Section \ref{section:preliminar} below. 

The principal results of the present paper are the following.
\begin{itemize}
\item We extend the flatness of the Deligne isomorphism to nontrivial families of smooth projective curves. The proof makes use of the idea of a \emph{logarithm} for a line bundle with connection.
\item The holomorphic extension of analytic torsion naturally defines an example of a logarithm which we call the \emph{Quillen logarithm}. We show that the Quillen logarithm coincides with the torsion invariant defined by Cappell-Miller in \cite{CM}.
\item We initiate an arithmetic intersection theory where the archimedean data consists of  flat, not necessarily unitary, connections.
\end{itemize}
Below we describe each of these items in more detail.

\subsection{Logarithms}
The results in \cite{FreixasWentworth:15} on intersection and Quillen connections are vacuous for a single Riemann surface and a single flat holomorphic line bundle, since there are no interesting  connections over points! To proceed further, and especially with applications to Arakelov theory in mind, we establish
  ``integrated'' versions of the intersection and holomorphic Quillen connections that are nontrivial even when the parameter space is zero dimensional. The nature of such an object is what we have referred to above as a logarithm of a line bundle $\Lcal\to S$ over a smooth variety $S$. This takes the place  of the logarithm of a hermitian metric in the classical situation. More precisely, a logarithm is an equivariant map $\LOG: \Lcal^\times\to \CBbb/2\pi i\, \ZBbb$. It has an associated connection which generalizes the Chern connection of a hermitian metric, but which is not necessarily unitary for some hermitian structure.
Although the notion of a logarithm is equivalent simply to  a trivialization of the $\GBbb_m$-torsor $\Lcal^\times$, it nevertheless plays an important role in the archimedean part of the arithmetic intersection product, as we explain below.

The logarithm provides a refinement of the relationship  between intersection connections, the holomorphic extension of  analytic torsion, and the Deligne isomorphism. More precisely, let $(X,p)$ be a compact Riemann surface with a base point, $\ov{X}$ the conjugate Riemann surface, and $\Lcal\to X$, $\Lcal^{c}\to\ov{X}$ rigidified (at $p$) flat complex line bundles with respective holonomies $\chi$ and $\chi^{-1}$, for some character $\chi\colon\pi_{1}(X,p)\to\CBbb^{\times}$. Applied to these data, Deligne's canonical (up to sign) isomorphism for $\Lcal$ and $\Lcal^{c}$ gives
 \begin{equation} \label{eqn:deligne}
 	\Dcal\colon \left\{\lambda(\Lcal-\Ocal_{X})\otimes_{\mathbb{C}}\lambda(\Lcal^{c}-\Ocal_{\overline{X}})\right\}^{\otimes 2}
	\isorightarrow\langle\Lcal,\Lcal\otimes\omega_{X}^{-1}\rangle\otimes_{\mathbb{C}}\langle\Lcal^{c},\Lcal^{c}\otimes\omega_{\overline{X}}^{-1}\rangle
 \end{equation}
 where $\lambda$ denotes the (virtual) determinant of cohomology for the induced holomorphic structure, and $\langle\ ,\ \rangle$ denotes the Deligne pairing (see Section \ref{section:preliminar} below for a review of Deligne's isomorphism). Choosing a hermitian metric on $T_{X}$, one can define a holomorphic extension of analytic torsion as in \cite{Fay}, and this gives rise to a natural \emph{Quillen logarithm} $\LOG_Q$ on the left hand side of
\eqref{eqn:deligne}, and an associated generalization of the Quillen connection. On the other hand, we shall show in Section \ref{section:int-log} that the intersection connection of \cite{FreixasWentworth:15} can be integrated  to an \emph{intersection logarithm} $\LOG_{int}$ on the right hand side of \eqref{eqn:deligne}.  The first main result  is the following (see Section \ref{section:LOG-det-coh} below, especially Theorem \ref{theorem:Iso-Deligne} and Corollary \ref{corollary:iso-Deligne}):

 \begin{theorem}\label{theorem:iso-Deligne}
 Deligne's isomorphism \eqref{eqn:deligne} is compatible with $\LOG_Q$ and $\LOG_{int}$, modulo $\pi i\,  \ZBbb$. That is,
 \begin{equation} \label{deligne-log}
 \LOG_{Q}=\LOG_{int}\circ\Dcal  \mod \pi i\,  \ZBbb
 \end{equation}
 Moreover, in families the Deligne isomorphism is flat with respect to the Quillen and intersection connections.
 \end{theorem}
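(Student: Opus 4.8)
The plan is to deduce the congruence \eqref{deligne-log} from a rigidity argument together with a single normalization, and to recognize the flatness assertion as the infinitesimal form of the same statement. The organizing observation is elementary: a logarithm $\LOG$ on a line bundle $\Lcal\to S$ determines a connection, and two logarithms $\LOG_{Q}$ and $\LOG_{int}$ living on line bundles identified by $\Dcal$ induce connections that agree under $\Dcal$ if and only if the function $\delta:=\LOG_{Q}-\LOG_{int}\circ\Dcal$ is locally constant on $S$. Consequently the two halves of the theorem are one and the same: the ``moreover'' flatness is the assertion that $\delta$ is locally constant in families, while \eqref{deligne-log} is the computation that the resulting constant lies in $\pi i\,\ZBbb$. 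I would therefore first prove that $\Dcal$ intertwines the Quillen and intersection connections, and then evaluate $\delta$.

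To intertwine the connections I would work over a base $S$ parametrizing the character $\chi$ (and, for the family version, the curve as well) and compute each connection separately. On the right of \eqref{eqn:deligne} the connection attached to $\LOG_{int}$ is, by the construction of Section \ref{section:int-log}, the intersection connection of \cite{FreixasWentworth:15}, whose curvature and functoriality are already available. On the left the connection attached to $\LOG_{Q}$ is the holomorphic Quillen connection coming from Fay's extension \cite{Fay}, whose variation is governed by the first variation of the underlying holomorphic torsion. Matching the two variations is exactly the Deligne--Riemann--Roch identity at the level of Chern--Weil representatives, and it delivers simultaneously the flatness of $\Dcal$ and the local constancy of $\delta$. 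I expect this variational analysis to be the principal obstacle: since $\chi$ is not unitary the relevant $\dbar$-Laplacian is not self-adjoint, so $\LOG_{Q}$ is a zeta-regularized determinant of an operator with complex spectrum --- the Cappell--Miller torsion of \cite{CM} --- and one must show that its first variation is the expected local expression and depends holomorphically on the parameters.

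With $\delta$ known to be locally constant, it remains to evaluate it. The character variety $\Hom(\pi_{1}(X,p),\CBbb^{\times})\cong(\CBbb^{\times})^{2g}$ is connected, so $\delta$ is a single constant and may be computed on the totally real locus of unitary characters. There $\Lcal^{c}$ is the conjugate of $\Lcal$, the holomorphic torsion degenerates to the classical Ray--Singer torsion, and the tensor square in \eqref{eqn:deligne} makes $\real\,\LOG_{Q}$ equal, up to a positive multiple, to the logarithm of the Quillen metric; Deligne's isomorphism theorem in the form proved by Gillet--Soul\'e \cite{Deligne:87, GS:ARR} states that $\Dcal$ is then an isometry, which forces $\real\,\delta=0$. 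The remaining purely imaginary constant I would pin down modulo $\pi$ by evaluating the phases of the two logarithms at a convenient reference character --- this is where the period invariants emphasized in the introduction enter --- while the intrinsic sign ambiguity of Deligne's isomorphism, which shifts $\LOG_{int}\circ\Dcal$ by $\log(-1)\equiv\pi i$, accounts for exactly the indeterminacy $\pi i\,\ZBbb$. Combining $\real\,\delta=0$ with this computation gives $\delta\in\pi i\,\ZBbb$, which is \eqref{deligne-log}, and the intertwining of connections obtained in the second step is the asserted flatness in families.
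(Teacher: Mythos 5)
Your skeleton is the same as the paper's: work over the connected character variety $M_{B}(X)$, show the difference $\delta=\LOG_{Q}-\LOG_{int}\circ\Dcal$ is locally constant (equivalently, that $\Dcal$ intertwines the connections), and normalize at the unitary locus where Deligne's isomorphism is an isometry, with the sign ambiguity of $\Dcal$ accounting for the reduction modulo $\pi i\,\ZBbb$. Where you diverge is in how the derivative identity $d\LOG_{Q}=d\LOG_{int}\circ\Dcal_{\chi}$ is obtained. You propose to prove it by a first-variation (anomaly) computation for the zeta-regularized determinant of the non-self-adjoint Laplacian, i.e.\ for the Cappell--Miller torsion --- and you correctly flag this as the principal obstacle, but you do not carry it out, and it is the genuinely hard analytic content of your route. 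The paper never performs this computation: its $\LOG_{Q}$ is defined via Fay's holomorphically extended torsion $T(\chi)$, whose holomorphy in $\chi$ is Fay's theorem, and the equality of the Quillen-type and intersection connections over $M_{B}(X)$ is quoted from the earlier work \cite[Sec.~5]{FreixasWentworth:15}; the identification of $\LOG_{Q}$ with the Cappell--Miller torsion is a separate, later theorem (Theorem \ref{thm:Quillen-CM}), itself proved not by a variational formula but by the same rigidity device the paper also offers as a second proof of the present theorem: both logarithms are holomorphic on $M_{B}(X)\setminus(V_{0}\cup V)$ and agree on the unitary locus, which is a maximal totally real subvariety, hence agree everywhere. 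So your plan is workable but front-loads exactly the analysis (holomorphic perturbation theory for closed operators, uniform heat-trace asymptotics) that the paper's ordering of results is designed to avoid at this stage.

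One step of your normalization is also muddled, though repairable. After getting $\real\delta=0$ from the isometry you propose to pin down the imaginary constant ``by evaluating phases at a convenient reference character,'' invoking period invariants. No such extra computation is needed, and periods play no role here: at a nontrivial unitary $\chi\notin V\cup V_{0}$ one may choose the bases of Prym differentials with $\eta_{k}(\chi^{-1})=\overline{\eta_{k}(\chi)}$, so that $\LOG_{L^{2}}$ is real, $T(\chi)$ is the real-valued Ray--Singer torsion, and $\LOG_{int}$ evaluates to $\log\|\langle\ell,m\otimes\theta^{-1}\rangle\|^{2}$; thus both logarithms are real modulo the $\log(\pm1)$ coming from the sign ambiguity of $\Dcal$, and the isometry alone forces $\delta\in\pi i\,\ZBbb$. (The periods you allude to enter elsewhere in the paper, in the arithmetic applications, not in fixing this constant.) Finally, your opening equivalence --- that \eqref{deligne-log} and the flatness assertion are two faces of the local constancy of $\delta$ --- matches how the paper deduces the family statement: the pointwise equality of logarithms, with its discrete $\pi i\,\ZBbb$ indeterminacy, differentiates to parallelism of $\Dcal$ for $\nabla_{Q}$ and $\nabla_{int}$.
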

The proof we give relies on our previous work. The idea is to deform the line bundles to the universal family over the Betti moduli space $M_{B}(X)=\Hom(\pi_{1}(X,p),\CBbb^{\times})$, over which we have previously proven the compatibility of Deligne's isomorphism with holomorphic Quillen and intersection connections. In \cite[Sec.\ 5.3]{FreixasWentworth:15}, these connections where shown to be flat over $M_{B}(X)$. Since the Quillen and intersection logarithms are primitives for the logarithms, the logarithms must therefore coincide up to a constant. The constant is fixed by evaluation on unitary characters, for which Deligne's isomorphism is an isometry. The ambiguity of sign in the isomorphism \eqref{eqn:deligne} is responsible for taking the values in the equality in the theorem modulo $\pi i\,  \ZBbb$, instead of $2\pi i\, \ZBbb$. 

\subsection{The Quillen-Cappell-Miller logarithm}
In recent years, several authors have developed complex valued analogues of analytic torsion for flat line bundles  \cite{Kim:07,  BravermanKappeler:08, McIntyreTeo:08}. In the holomorphic case, this is due to Cappell-Miller \cite{CM}. With the notation above, the Cappell-Miller holomorphic torsion can be seen as a trivialization of $\lambda(\Lcal)\otimes_{\CBbb}\lambda(\Lcal^{c})$ that depends on the hermitian metric on $T_{X}$ and the connections on the line bundles. Hence, it gives raise to a logarithm, which we temporarily call the Cappell-Miller logarithm. In \cite{CM}, the authors ask whether their torsion has similar properties to the holomorphic torsion, as in the work of Bismut-Gillet-Soul\'e and Gillet-Soul\'e. Our second main result is that this is indeed the case for Riemann surfaces and line bundles. In fact, we prove the following (see Section \ref{section:CM} and Theorem \ref{thm:Quillen-CM} below):

\begin{theorem}\label{theorem:CM-Quillen}
The Cappell-Miller logarithm and the Quillen logarithm coincide.
\end{theorem}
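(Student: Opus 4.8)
The plan is to show that both logarithms, being trivializations of the same line bundle $\lambda(\Lcal)\otimes_{\CBbb}\lambda(\Lcal^{c})$ (equivalently, after the squaring and $\Lcal^c$-duality built into \eqref{eqn:deligne}, of the left-hand side of Deligne's isomorphism), are \emph{a priori} equal up to an additive constant once we know they have the same associated connection, and then to pin down that constant by evaluation on a convenient point. First I would verify that the Quillen logarithm $\LOG_Q$ of Section~\ref{section:LOG-det-coh} and the Cappell-Miller logarithm $\LOG_{CM}$ of Section~\ref{section:CM} both arise as primitives of the same holomorphic Quillen connection on $\lambda(\Lcal)\otimes_{\CBbb}\lambda(\Lcal^{c})$. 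The natural way to do this is to compare their infinitesimal variations: differentiating $\LOG_Q$ reproduces, by construction, the holomorphic extension of analytic torsion à la Fay, while differentiating $\LOG_{CM}$ should reproduce the Cappell-Miller torsion's anomaly/variation formula. So the first technical step is to write down the variational formula for the Cappell-Miller torsion as one varies the flat connection (equivalently, moves in the base of the universal family over $M_B(X)$) and match it, term by term, against the variation of Fay's holomorphic determinant.

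The cleanest route to that comparison is spectral. Both invariants are defined through zeta-regularized determinants of the Dolbeault (or twisted $\dbar$-) Laplacian $\Delta=\dbar^*\dbar$, but for a \emph{non-unitary} flat connection the relevant operator is no longer self-adjoint; Cappell-Miller work instead with the non-self-adjoint Laplacian built from the connection and its formal adjoint, taking a regularized determinant via the trace of the heat kernel and a contour integral around the spectrum. I would recall that in the unitary case the Cappell-Miller torsion reduces to the Ray-Singer torsion, and that Fay's holomorphic extension is characterized by being the unique holomorphic function on $M_B(X)$ whose restriction to the unitary locus is the Ray-Singer analytic torsion (up to the metric-dependent factors that are common to both sides). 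Thus the strategy is: \emph{(i)} show $\LOG_{CM}$ is holomorphic in the character $\chi$, i.e.\ that the Cappell-Miller determinant depends holomorphically on the connection parameters; \emph{(ii)} show its restriction to unitary $\chi$ agrees with the Ray-Singer/Quillen value; \emph{(iii)} invoke the identity theorem (holomorphic functions agreeing on the totally real unitary locus $\Hom(\pi_1,\U(1))\subset M_B(X)$, which is maximal-dimensional totally real, agree everywhere) to conclude equality on all of $M_B(X)$, hence for our single $\chi$.

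The main obstacle is step \emph{(i)}, the holomorphicity and the precise matching of variational formulas, because the Cappell-Miller construction secretly involves \emph{two} flat structures — the connection with holonomy $\chi$ on $\Lcal$ and the conjugate/dual connection with holonomy $\chi^{-1}$ on $\Lcal^c$ — and the regularized determinant is a product over the spectrum of a non-normal operator whose eigenvalues move as $\chi$ varies. One must control the variation of these eigenvalues and of the corresponding spectral projectors, and check that the heat-kernel/resolvent regularization produces no extra nonholomorphic anomaly beyond what is already absorbed into the metric-dependent normalization shared with $\LOG_Q$. Concretely, I expect to use a Kato-style analytic perturbation argument to establish holomorphic dependence of the regularized determinant away from the locus where the twisted cohomology jumps, together with an analysis of the behavior across that jump locus showing the logarithm extends (this is exactly where Fay's extension and the Cappell-Miller torsion must be shown to have matching singular/extension behavior). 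Once holomorphicity and the unitary-locus comparison are in hand, the rigidity from Theorem~\ref{theorem:iso-Deligne} — namely that both logarithms are flat primitives of the \emph{same} Quillen connection over $M_B(X)$ — reduces everything to fixing a single constant, and that constant vanishes precisely because on the unitary locus the Quillen metric and the Ray-Singer torsion agree by the classical Bismut-Gillet-Soul\'e theory.
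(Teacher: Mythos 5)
Your strategy — establish holomorphicity of the Cappell-Miller torsion in the character $\chi$ via a Kato-type analytic perturbation argument for the non-self-adjoint Laplacians, check agreement with the Ray-Singer/Quillen value on the unitary locus, and conclude by the identity theorem on that maximal totally real subvariety of $M_B(X)$ — is exactly the paper's proof of this theorem. The preliminary musings about matching variational formulas and connections are an unnecessary detour, but the three-step plan you settle on is the argument the paper carries out.
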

In light of  the theorem, we may also call $\LOG_{Q}$ the \emph{Quillen-Cappell-Miller logarithm}. The idea of the proof is again by deformation to the universal case over $M_{B}(X)$. We prove that Cappell-Miller's construction can be done in familes and that it provides a \emph{holomorphic} trivialization of the ``universal'' determinant of cohomology on $M_{B}(X)$. The strategy is analogous to the observation in \cite{BGS3}, according to which Bismut-Freed's construction of the determinant of cohomology and Quillen's metric are compatible with the holomorphic structure of the Knudsen-Mumford determinant \cite{KM}. However, these authors work with hermitian vector bundles, and their reasoning is particular to the theory of self-adjoint Laplace type operators.  As our operators are not self-adjoint, this argument does not directly apply. Nevertheless, the operators we consider are still conjugate, \`a la Gromov, to a fixed self-adjoint laplacian on a fixed domain. This presentation exhibits a holomorphic dependence with respect to parameters in $M_{B}(X)$. In this context, Kato's theory of analytic perturbations of closed operators \cite[Chap.\ VII]{Kato} turns out to be well-suited, and provides the necessary alternative arguments to those in  \cite{BGS3}. Once this is completed, we obtain two holomorphic logarithms on the universal determinant of cohomology that agree on the unitary locus. By a standard argument this implies that they must coincide everywhere. \emph{A posteriori}, we remark that the analogue of the curvature theorems of Bismut-Freed and Bismut-Gillet-Soul\'e for the Cappell-Miller torsion is empty, since we prove the latter gives rise to a flat Quillen type connection in the family situation. This makes our functorial approach essential in order to establish nontrivial finer properties of the Cappell-Miller torsion.

\subsection{The Arithmetic-Riemann-Roch theorem}
The third aim of this paper is to use the results above to initiate an Arakelov theory for flat line bundles on arithmetic surfaces (Section \ref{section:AIT}). The quest for such a theory was made more conceivable by Burgos' cohomological approach to Arakelov geometry, which interprets Green currents as objects in some truncated Deligne \emph{real} cohomology \cite{Burgos}. This evolved into the abstract formalism of Burgos-Kramer-K\"uhn \cite{BKK}, allowing one to introduce \emph{integral} Deligne cohomology instead. Despite these developments, to our knowledge, the attempts so far have been unsuccessful. It turns out that the intersection logarithm is the key in the construction of an arithmetic intersection pairing for flat line bundles. At the archimedean places, the nature of our tools forces us to work simultaneously with a Riemann surface and its conjugate, and pairs of flat line bundles with opposite holonomies. 
We find an analogue of this apparatus in the arithmetic setting 
which we call a \emph{conjugate pair} $\Lcal^\sharp$ of line bundles with connections (see Definition \ref{def:conjugate-pair}). 
Through Deligne's pairing and the intersection logarithm, we attach to conjugate pairs $\Lcal^{\sharp}$ and $\Mcal^{\sharp}$
 an object $\langle\Lcal^{\sharp},\Mcal^{\sharp}\rangle$, which consists of a line bundle over $\Spec\Ocal_{K}$ together with the data of intersection logarithms at the archimedean places. For such an object there is a variant of the arithmetic degree in classical Arakelov geometry, denoted  $\deg^{\sharp}$, which takes values in $\CBbb/\pi i\,  \ZBbb$ instead of $\RBbb$. The construction also applies to mixed situations; for instance, to a rigidified conjugate pair $\Lcal^{\sharp}$ and a hermitian line bundle $\ov{\Mcal}$. When the dualizing sheaf $\omega_{\Xcal/S}$ is equipped with a smooth hermitian metric, we can define $\lambda(\Lcal^{\sharp})_{Q}$, the determinant of cohomology of $\Lcal^{\sharp}$ with the Quillen-Cappell-Miller logarithms at the archimedean places. Using this formalism, we prove an arithmetic Riemann-Roch type theorem for these enhanced line bundles (Theorem \ref{theorem:arithmetic-RR} below):

\begin{theorem}[{\sc Arithmetic Riemann-Roch}]     \label{thm:arr}
Let $\Xcal\rightarrow S=\Spec\Ocal_K$ be an arithmetic surface with a section $\sigma:S\rightarrow\Xcal$.
Suppose the relative dualizing sheaf $\omega_{\Xcal/S}$ is endowed  with a smooth hermitian metric. Let $\Lcal^{\sharp}$ be a rigidified conjugate pair of line bundles with connections. Endow the determinant of cohomology of $\Lcal^{\sharp}$ with the Quillen-Cappell-Miller logarithm. Then the following equality holds in $\CBbb/\pi i\,  \ZBbb$.
\begin{align}
	\begin{split}
	12\deg^{\sharp}\lambda(\Lcal^{\sharp})_{Q}-2\delta
	&=2(\overline{\omega}_{\Xcal/S},\overline{\omega}_{\Xcal/S})
	+6(\Lcal^{\sharp},\Lcal^{\sharp})
	-6(\Lcal^{\sharp},\overline{\omega}_{\Xcal/S})\\
&\qquad	-(4g-4)[K:\QBbb]
	\left(\frac{\zeta'(-1)}{\zeta(-1)}+\frac{1}{2}\right),
	\end{split}
\end{align}
where $\delta=\sum_{\pfrak}n_{\pfrak}\log (N\pfrak)$ is the ``Artin conductor'' measuring the bad reduction of $\Xcal\rightarrow\Spec\Ocal_K$. If $K$ does not admit any real embeddings then the equality lifts to  $\CBbb/2\pi i\, \ZBbb$.
\end{theorem}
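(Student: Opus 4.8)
The plan is to follow Deligne's functorial strategy: regard the isomorphism \eqref{eqn:deligne} over the integral base $S=\Spec\Ocal_K$, apply the arithmetic degree $\deg^{\sharp}$ to both sides, and transport all of the archimedean analytic content through the compatibility of logarithms. The genuinely new analytic inputs — that the determinant of cohomology carries the Quillen–Cappell–Miller logarithm and that this logarithm matches the intersection logarithm across Deligne's isomorphism — are already secured by Theorems \ref{theorem:iso-Deligne} and \ref{theorem:CM-Quillen}. What remains is to convert these into a numerical identity in $\CBbb/\pi i\,\ZBbb$ and to account correctly for the finite places.

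First I would produce an integral model of \eqref{eqn:deligne}. Over $S$ the determinant of cohomology is the Knudsen–Mumford determinant and the Deligne pairings are defined integrally, so Deligne's isomorphism extends to an isomorphism of line bundles on $S$ away from the fibres of bad reduction; its failure to be integral at a bad prime $\pfrak$ is measured by the local contribution $n_{\pfrak}\log(N\pfrak)$, and the sum of these is the Artin conductor $\delta$. Applying $\deg^{\sharp}$ then matches finite-place contributions up to this defect, while Theorem \ref{theorem:iso-Deligne} matches the archimedean contributions modulo $\pi i\,\ZBbb$ and Theorem \ref{theorem:CM-Quillen} identifies the analytic object computing $\lambda(\Lcal^{\sharp})_Q$. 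Writing the conjugate-pair objects so that $\deg^{\sharp}\lambda(\Lcal^{\sharp})_Q$ combines the contributions of $X$ and $\overline{X}$, and reading the factor $2$ off the $\otimes 2$ in \eqref{eqn:deligne}, this gives
\[
2\bigl(\deg^{\sharp}\lambda(\Lcal^{\sharp})_Q - \deg^{\sharp}\lambda(\Ocal^{\sharp})_Q\bigr) = (\Lcal^{\sharp},\Lcal^{\sharp}) - (\Lcal^{\sharp},\overline{\omega}_{\Xcal/S}) + (\text{finite defect}),
\]
where the Deligne-pairing degree is evaluated via the intersection logarithm of Section \ref{section:AIT} and expanded by bilinearity. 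Multiplying by $6$ isolates the two pairing terms $6(\Lcal^{\sharp},\Lcal^{\sharp})$ and $-6(\Lcal^{\sharp},\overline{\omega}_{\Xcal/S})$ of the asserted formula.

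It remains to treat the structure-sheaf term $\deg^{\sharp}\lambda(\Ocal^{\sharp})_Q$, the trivial conjugate pair with trivial connection. Here the situation is unitary: the Quillen–Cappell–Miller logarithm collapses to the classical Quillen metric and $\deg^{\sharp}$ to the usual arithmetic degree $\widehat{\deg}$, so the classical arithmetic Noether formula of Gillet–Soulé (equivalently Faltings and Moret-Bailly) applies. It supplies the self-intersection $(\overline{\omega}_{\Xcal/S},\overline{\omega}_{\Xcal/S})$, the zeta contribution proportional to $(4g-4)[K:\QBbb]\bigl(\zeta'(-1)/\zeta(-1)+\tfrac12\bigr)$, and the remaining portion of $\delta$. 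The conjugate-pair structure contributes each archimedean datum once for $X$ and once for $\overline{X}$, which is what doubles the Hermitian contributions and accounts for the coefficient $2$ on $(\overline{\omega}_{\Xcal/S},\overline{\omega}_{\Xcal/S})$ and the factor in $-2\delta$. Combining this with the previous display yields the stated equality.

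I expect the principal obstacle to be the modular bookkeeping rather than any new analysis. Two points need care. First, one must check that the defects arising from the integral model and from the classical Noether formula assemble to exactly $-2\delta$ with the correct sign and multiplicity. Second, and more delicately, one must track the ambiguity of \eqref{eqn:deligne} up to sign: at a real place complex conjugation identifies $\Lcal^c$ with $\overline{\Lcal}$ and the sign ambiguity forces the identity only modulo $\pi i\,\ZBbb$, whereas if $K$ has no real embeddings every place is complex, the conjugate factors are independent, and the two logarithms agree modulo $2\pi i\,\ZBbb$ — which is precisely the refinement recorded in the final clause of the theorem.
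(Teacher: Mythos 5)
Your strategy is the same as the paper's: combine Deligne's isomorphism for $\Lcal$, $\Lcal^{c}$ and $\Ocal_{\Xcal}$ over $\Spec\Ocal_K$, transport the archimedean data through Theorem \ref{theorem:iso-Deligne} and Theorem \ref{theorem:CM-Quillen}, reduce the structure-sheaf term to the Gillet--Soul\'e arithmetic Riemann--Roch/Noether formula in Deligne's functorial form, and apply $\deg^{\sharp}$ at the end. One piece of your bookkeeping is misplaced, however: you attribute the Artin conductor $\delta$ (or part of it) to a failure of integrality of the Deligne isomorphism $\lambda(\Lcal-\Ocal)^{\otimes 2}\isorightarrow\langle\Lcal,\Lcal\otimes\omega_{\Xcal/S}^{-1}\rangle$ at the bad fibres. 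That isomorphism is an honest isomorphism of line bundles over all of $S$, bad fibres included, so there is no ``finite defect'' in your first display. The discriminant enters only through the Mumford/Noether isomorphism $\lambda(\Ocal_{\Xcal})_{Q}^{\otimes 12}\otimes\Ocal(-\Delta)\isorightarrow\langle\overline{\omega}_{\Xcal/S},\overline{\omega}_{\Xcal/S}\rangle$ (with $\Delta$ related to the Artin conductor by T.~Saito), and since the conjugate-pair determinant carries two copies of $\lambda(\Ocal_{\Xcal})$ this is applied twice, producing all of $-2\delta$, the coefficient $2$ on $(\overline{\omega}_{\Xcal/S},\overline{\omega}_{\Xcal/S})$, and the factor $4g-4=2(2g-2)$ in the zeta term at once. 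With that correction your two displays assemble to the stated formula with no residual defect to reconcile. On the final clause, the precise mechanism is that the single integral Deligne isomorphism carries one global sign, which appears once at $\tau$ and once at $\overline{\tau}$ and hence squares away when $\tau\neq\overline{\tau}$; at a real place it appears only once, which is what obstructs the lift to $\CBbb/2\pi i\,\ZBbb$ --- not an identification of $\Lcal^{c}$ with $\overline{\Lcal}$, which holds only for unitary holonomy.
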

In the theorem it is possible to avoid the rigidification of $\Lcal^{\sharp}$ along the section $\sigma$, at the cost of taking values in $\CBbb/\pi i\,  \ZBbb[1/h_{K}]$, where $h_{K}$ is the class number of $K$. The particular choice of section is not relevant: none of the quantities computed by the formula depends upon it. However, the existence of a section is needed for the construction. A variant of the formalism (including an  arithmetic Riemann-Roch formula) consists in introducing conjugate pairs of arithmetic surfaces and line bundles. This makes sense and can be useful when $K$ is a CM field. The arithmetic intersection numbers are then valued in $\CBbb/2\pi i\,\ZBbb$.


\tableofcontents

\section{Deligne-Riemann-Roch and intersection connections}\label{section:preliminar}
In this section we briefly review those results from our previous work \cite{FreixasWentworth:15} that are relevant for the present article. Let $\pi\colon\Xcal\to S$ be a smooth and proper morphism of quasi-projective and smooth complex varieties, with connected fibers of dimension 1. Let $\Lcal$ and $\Mcal$ be two holomorphic line bundles on $\Xcal$. The Deligne pairing of $\Lcal$ and $\Mcal$ is a holomorphic line bundle $\langle \Lcal,\Mcal\rangle$ on $S$, that can be presented in terms of generators and relations. Locally on $S$ (\emph{i.e.} possibly after replacing $S$ by an open subset), the line bundle is trivialized by symbols $\langle\ell, m\rangle$, where $\Div \ell$ and $\Div m$ are disjoint, finite and \'etale\footnote{Under the most general assumptions (l.c.i. flat morphisms between schemes), it only makes sense to require flatness of the divisors. In our setting (smooth morphisms of smooth varieties over $\CBbb$), a Bertini type argument shows we can take them to be \'etale \cite[Lemma 2.8]{FreixasWentworth:15}.} over an open subset of $S$ (for simplicity, we say that $\ell$ and $m$ are in \emph{relative general position}).
 Relations, inducing the glueing and cocycle conditions, are given by
\begin{displaymath}
	\langle f\ell,m\rangle=N_{\Div m/S}(f)\langle\ell,m\rangle,
\end{displaymath} 
whenever $f$ is a meromorphic function such that both symbols are defined, as well as a symmetric relation in the other ``variable''. Here, $N_{\Div m/S}(f)$ denotes the norm of $f$ along the divisor of $m$. It is multiplicative with respect to addition of divisors, and it is equal to the usual norm on functions for finite, flat divisors over the base. The construction is consistent, thanks to the Weil reciprocity law: for two meromorphic functions $f$ and $g$ whose divisors are in relative general position, we have
\begin{displaymath}
	N_{\Div f/S}(g)=N_{\Div g/S}(f).
\end{displaymath}
The Deligne pairing can be constructed both in the analytic and the algebraic categories, and it is compatible with the analytification functor. This is why we omit specifying the topology. The Deligne pairing is compatible with base change and has natural functorial
properties in $\Lcal$ and $\Mcal$.

Let $\nabla \colon \Lcal\to \Lcal\otimes\Omega^{1}_{\Xcal/S}$ be a relative holomorphic connection, and assume for the time being that $\Mcal$ has relative degree 0. We showed that there exists a $\Ccal^{\infty}_{\Xcal}$ connection $\widetilde{\nabla}\colon \Lcal\to \Lcal\otimes\Acal^{1}_{\Xcal}$, \emph{compatible} with the holomorphic structure on $\Lcal$ (this is $\widetilde{\nabla}^{0,1}=\ov{\partial}_{\Lcal}$), such that the following rule determines a well defined compatible connection on $\langle \Lcal,\Mcal\rangle$:
\begin{displaymath}
	\nabla_{tr}\langle\ell, m\rangle=\langle\ell,m\rangle\otimes\tr_{\Div m/S}\left(\frac{\widetilde{\nabla}\ell}{\ell}\right).
\end{displaymath}
Notice that it makes sense to take the trace of the differential form $\widetilde{\nabla}\ell/\ell$ along $\Div m$, since the latter is finite \'etale over the base, and the divisors of the sections are disjoint. The existence of $\widetilde{\nabla}$ is not obvious, since the rule just defined encodes a nontrivial reciprocity law, that we call (WR):
\begin{displaymath}
	\tr_{\Div f/S}\left(\frac{\widetilde{\nabla}\ell}{\ell}\right)=\tr_{\Div\ell/S}\left(\frac{df}{f}\right),
\end{displaymath}
whenever $f$ is a meromorphic function and the divisors of $f$ and $\ell$ are in relative general position. The construction of $\widetilde{\nabla}$ can be made to be compatible with base change, and then  it is  unique up to $\Gamma(\Xcal,\pi^{-1}\Acal^{1,0}_{S})$. Furthermore, if $\sigma\colon S\to\Xcal$ is a section and $\Lcal$ is trivialized along $\sigma$, one can isolate a particular extension $\widetilde{\nabla}$ that restricts to the exterior differentiation on $S$ along $\sigma$ (through the trivialization of $\Lcal$). Then the connection $\nabla_{tr}$ can be extended to $\Mcal$ of any relative degree, without ambiguity. We call $\widetilde{\nabla}$ a (or the) canonical extension of $\nabla$, and $\nabla_{tr}$ a trace connection.

Trace connections are manifestly not symmetric, since they do not require any connection on $\Mcal$. Let $\widetilde{\nabla}^{\prime}\colon \Mcal\to \Mcal\otimes\Acal_{\Xcal}^{1}$ be a smooth compatible connection on $\Mcal$ and let $\nabla_{tr}$ be a trace connection on $\langle\Lcal,\Mcal\rangle$. If the relative degree of $\Mcal$ is not zero, we tacitly assume that $\Lcal$ is rigidified along a given section. The trace connection $\nabla_{tr}$ can then be completed to a connection that ``sees'' $\widetilde{\nabla}^{\prime}$:
\begin{displaymath}
	\frac{\nabla_{int}\langle\ell,m\rangle}{\langle\ell,m\rangle}=\frac{\nabla_{tr}\langle\ell,m\rangle}{\langle\ell,m\rangle}
	+\frac{i}{2\pi}\pi_{\ast}\left(\frac{\widetilde{\nabla}^{\prime}m}{m}\wedge F_{\widetilde{\nabla}}\right),
\end{displaymath}
where $F_{\widetilde{\nabla}}$ is the curvature of the canonical extension $\widetilde{\nabla}$ on $\Lcal$. Assume now that $\widetilde{\nabla}^{\prime}$ is a canonical extension of a relative holomorphic connection $\nabla^{\prime}\colon\Mcal\to\Mcal\otimes\Omega_{\Xcal/S}^{1}$. Then the intersection connection is compatible with the obvious symmetry of the Deligne pairing. These constructions carry over to the case when the relative connections only have a smooth dependence on the horizontal directions, but are still holomorphic on fibers. The intersection connection reduces to the trace connection if $\widetilde{\nabla}^{\prime}$ is the Chern connection of a smooth hermitian metric on $\Mcal$, flat on fibers. Finally, the trace connection coincides with the Chern connection of the metrized Deligne pairing in case $\widetilde{\nabla}$ is a Chern connection, flat on fibers, as well.

%
 Let us denote $\lambda(\Lcal)$ for the determinant of the cohomology of $\Lcal$, that is
\begin{displaymath}
	\lambda(\Lcal)=\det R\pi_{\ast}(\Lcal).
\end{displaymath}
The determinant of $R\pi_{\ast}(\Lcal)$ makes sense, since it is a perfect complex and so the theory of Knudsen-Mumford \cite{KM} applies. It can be extended,  multiplicatively, to virtual objects, namely formal sums of line bundles with integer coefficients. Deligne \cite{Deligne:87} proves the existence of an isomorphism
\begin{displaymath}
	\Dcal\colon\lambda(\Lcal-\Ocal)^{\otimes 2}\isorightarrow\langle\Lcal,\Lcal\otimes_{\Xcal/S}\rangle,
\end{displaymath}
where $\omega_{\Xcal/S}$ is the relative cotangent bundle of $\pi$. The isomorphism is compatible with base change and is functorial in $\Lcal$. It is unique  with this properties, up to sign. It can be combined with Mumford's canonical (up to sign) and functorial isomorphism \cite{Mumford}, which in the language of Deligne's pairings reads
\begin{displaymath}
	\lambda(\Ocal)^{\otimes 12}\isorightarrow\langle\omega_{\Xcal/S},\omega_{\Xcal/S}\rangle.
\end{displaymath}
Hence, we have a canonical (up to sign) isomorphism
\begin{displaymath}
	\Dcal^{\prime}\colon\lambda(\Lcal)^{\otimes 12}\isorightarrow\langle\omega_{\Xcal/S},\omega_{\Xcal/S}\rangle\otimes\langle\Lcal,\Lcal\otimes_{\Xcal/S}\rangle^{\otimes 6},
\end{displaymath}
which is again compatible with base change and functorial in $\Lcal$. The latter is also usually called Deligne's isomorphism.

When the line bundles $\Lcal$ and $\omega_{\Xcal/S}$ are endowed with smooth hermitian metrics, all the line bundles on $S$ involved in Deligne's isomorphism inherit hermitian metrics. On the Deligne pairings, the construction is the metrized counterpart of the intersection connection definition, and it will not be recalled here. It amounts to the $\star$-product of Green currents introduced by Gillet-Soul\'e in arithmetic intersection theory. The determinant of cohomology can be equipped with the so-called Quillen metric, whose Chern connection is compatible with the Quillen connection of Bismut-Freed \cite{Bismut-Freed-1, Bismut-Freed-2}. The Deligne isomorphism is, up to an overall topological constant, an isometry for these metrics. The value of the constant can be pinned down, for instance by using the arithmetic Riemann-Roch theorem of Gillet-Soul\'e \cite{GS:ARR}. We refer the reader to the survey articles of Soul\'e \cite{Soule} and Bost \cite{Bost}, where all these constructions and facts are summarized. Because the Deligne isomorphism is an isometry in the metrized case, it is in particular parallel for the corresponding Chern connections.  

 One of the aims of \cite{FreixasWentworth:15} is to elucidate to what extent Deligne's isometry, and more precisely its Chern connection version, carries over in the case of relative, flat, compatible connections on $\Lcal$ that are not necessarily unitary for some hermitian structure. In \cite[Sec.\ 5]{FreixasWentworth:15}
  (see especially Theorem 5.10 and Remark 5.11 therein) we discuss and solve this question in the particular case of trivial fibrations. 
  The present article shows that this particular case actually implies the most general one. 
  More precisely, fix $X$ a compact Riemann surface, a base point $p\in X$, and a hermitian metric on 
   $\omega_{X}$. 
   As parameter space we take $S=M_{B}(X)$, the affine variety of characters $\chi\colon\pi_{1}(X,p)\to\CBbb^{\times}$. Let $\Xcal=X\times M_{B}(X)$, which is fibered over $M_{B}(X)$ by the second projection. Over $\Xcal$, there is a universal line bundle with relative connection, $(\Lcal,\nabla)$, whose holonomy at a given $\chi\in M_{B}(X)$ is $\chi$ itself. We also need to introduce the conjugate Riemann surface $\ov{X}$ (reverse the complex structure), with same base point and same character variety $M_{B}(X)$. We put $\Xcal^{c}=\ov{X}\times M_{B}(X)$. There is a universal line bundle with relative connection $(\Lcal^{c},\nabla^{c})$, whose holonomy at a given $\chi\in M_{B}(X)$ is now $\chi^{-1}$. Note that if $\chi$ is unitary, then $\Lcal^{c}_{\chi}$ is the holomorphic line bundle on $\ov{X}$ conjugate to $\Lcal_{\chi}$, but this is not the case for general $\chi$. By using a variant of the holomorphic analytic torsion introduced by Fay \cite{Fay}, later used by Hitchin \cite{Hitchin}, we endowed the product of determinants of cohomologies, $\lambda(\Lcal)\otimes_{\CBbb}\lambda(\Lcal^{c})$, with a holomorphic and flat connection on $M_{B}(X)$. We then showed that this Quillen type connection corresponds to the tensor product of intersection connections on the Deligne pairings, through the tensor product of Deligne's isomorphisms for $\Lcal$ and $\Lcal^{c}$. 
%

\section{Logarithms and Deligne Pairings}
\subsection{Logarithms and connections on holomorphic line bundles}
Let $S$ be a connected complex analytic manifold and $\Lcal\to S$ a $\Ccal^{\infty}_{S}$ complex line bundle. 
To simplify the presentation, the same notation will be used
 when $\Lcal$ is understood to have the structure of a holomorphic line bundle.  
 Also, no notational distinction will be made between a holomorphic line bundle and the associated invertible  sheaf of $\Ocal_S$ modules. Finally, 
denote by $\Lcal^{\times}$ the $\Gm$ torsor (or principal bundle) given by the complement of the zero section in the total space of $\Lcal$. 

 Here we introduce the notion of smooth logarithm for $\Lcal$.  For a holomorphic bundle there is also notion of holomorphic logarithm, and whenever we talk about holomorphic logarithm it will be implicit that $\Lcal$ has a holomorphic structure. 
The discussion of logarithms and connections is given in terms of $d\log$ deRham complexes. The reader will notice that this is an additive reformulation of the notion of trivialization (see Remark \ref{remark:why-logs} below).
\begin{definition}
A \emph{smooth (resp.\ holomorphic) logarithm} for $\Lcal$ is a map
\begin{displaymath}
	\LOG:\Lcal^{\times}\longrightarrow\mathbb{C}/2\pi i\, \mathbb{Z}
\end{displaymath}
satisfying: $\LOG(\lambda\cdot e)=\log\lambda+\LOG(e)$, for  $\lambda\in \Gm$ and $e\in\Lcal^{\times}$, and such that the well-defined $\mathbb{C}^{\times}$-valued function $\exp\circ\LOG$ is smooth (resp.\ holomorphic) with respect to the natural structure of smooth (resp. complex analytic) manifold on $\Lcal^{\times}$.
\end{definition}
\begin{remark}\label{remark:why-logs}
Clearly, a logarithm  is a reformulation  of the choice of a trivialization.
The reason for working with logarithms in this paper is to provide a simplification of some formulas, a direct relationship with connections, as well as a context that is well-suited for the arithmetic discussion later on. Indeed, in classical Arakelov geometry the corresponding avatar of a smooth $\LOG$  is the ordinary logarithm  of a smooth hermitian metric, and more generally the notion of Green current for a cycle.
\end{remark}
A logarithm $\LOG$ can be reduced modulo $\pi i\,  \ZBbb$. We will write $\overline{\LOG}$ for the reduction of $\LOG$. By construction, the reduction of a logarithm modulo $\pi i\,  \ZBbb$ factors through $\Lcal^{\times}/\lbrace\pm 1\rbrace$:
\begin{displaymath}
	\xymatrix{
		\Lcal^{\times}\ar@{->>}[d]\ar[r]^{\hspace{-0.5cm}\LOG}	&\CBbb/2\pi i\, \ZBbb\ar@{->>}[d]\\
		\Lcal^{\times}/\lbrace\pm 1\rbrace\ar[r]^{\hspace{-0.25cm} \overline{\LOG} }	&\CBbb/\pi i\,  \ZBbb\ .
	}
\end{displaymath}
Though perhaps not apparent at this moment, the necessity for this reduction will appear at several points below (notably because of the sign ambiguity in Deligne's isomorphism).

Given a smooth (resp.\ holomorphic) logarithm $\LOG$, we can locally lift it to a well defined $\mathbb{C}$-valued smooth (resp.\ holomorphic) function. Therefore, the differential  $d\LOG$ is a well-defined differential form on $\Lcal^\times$.
A smooth $\LOG$ is holomorphic exactly when $d\LOG$ is holomorphic.
%
We can attach to a smooth logarithm a smooth and flat connection $\nabla_{\LOG}$ on $\Lcal$, determined by the rule
\begin{equation}\label{eq:1}
	\frac{\nabla_{\LOG}e}{e}=e^\ast (d\LOG) ,
\end{equation}
where $e: S^\circ\subset S\to \Lcal^\times$ is a local frame. 
 If $\Lcal$ is holomorphic the connection
 is compatible exactly when $\LOG$ is holomorphic, as we immediately see by taking the $(0,1)$ part of \eqref{eq:1} for $e$ holomorphic. The existence of a smooth (resp.\ holomorphic) $\LOG$ on $\Lcal$ is related to the existence of a flat smooth (resp.\ holomorphic connection). 
 
 We will say that a connection $\nabla$ on $\Lcal$ is \emph{associated to a logarithm}, if $\nabla=\nabla_{\LOG}$ for some logarithm $\LOG$ on $\Lcal$. For the sake of clarity, it is worth elaborating on this notion from a cohomological point of view. Let us focus on the holomorphic case, which is the relevant one in the present work (the smooth case is dealt with similarly). Introduce the holomorphic log-deRham complex:
\begin{displaymath}
	\Omega_{S}^{\times}:\Ocal_{S}^{\times}\xrightarrow{\ d\log\ }\Omega_{S}^{1}\overset{d}{\longrightarrow}\Omega_{S}^{2}\longrightarrow\cdots
\end{displaymath}
There is an exact sequence of sheaves of abelian groups
\begin{displaymath}
	0\longrightarrow \tau_{\geq 1}\Omega_{S}^{\bullet}\longrightarrow\Omega_{S}^{\times}\longrightarrow\Ocal_{S}^{\times}\longrightarrow 1\ ,
\end{displaymath}
where $\Omega_{S}^{\bullet}$ is the holomorphic deRham complex and $\tau_{\geq i}$ stands for the filtration b\^ete  of a complex from degree $i$ on. From the hypercohomology long exact sequence, we derive a short exact sequence of groups
\begin{displaymath}
	H^{0}(S,\Omega_{S}^{1})\lra \mathbb{H}^{1}(S,\Omega_{S}^{\times})\lra\Pic(S)\ .
\end{displaymath}
The middle group $\mathbb{H}^{1}(S,\Omega_{S}^{\times})$ classifies isomorphism classes of holomorphic line bundles on $S$ with  flat holomorphic connections. The vector space $H^{0}(S,\Omega_{S}^{1})$ maps to the holomorphic connections on the trivial line bundle. The map to $\Pic(S)$ is just forgetting the connection. We will write by $[\Lcal,\nabla]$ the class in $\mathbb{H}^{1}(S,\Omega_{S}^{\times})$ of $\Lcal$ with a holomorphic connection $\nabla$.
\begin{proposition}
There exists a holomorphic $\LOG$ for $\Lcal$ if, and only if, there exists a holomorphic connection $\nabla$ on $\Lcal$ with $[\Lcal,\nabla]=0$. In this case, the connection $\nabla$ is associated to a $\LOG$.
\end{proposition}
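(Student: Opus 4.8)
The plan is to reduce the whole statement to one structural fact about the group $\mathbb{H}^{1}(S,\Omega_{S}^{\times})$: the class $[\Lcal,\nabla]$ vanishes if and only if the pair $(\Lcal,\nabla)$ is isomorphic to $(\Ocal_{S},d)$, the trivial bundle with its canonical connection. Unwinding the exact sequence $H^{0}(S,\Omega_{S}^{1})\to\mathbb{H}^{1}(S,\Omega_{S}^{\times})\to\Pic(S)$, vanishing of the image in $\Pic(S)$ says $\Lcal$ is holomorphically trivial, while the indeterminacy coming from $H^{0}(S,\Omega_{S}^{1})$ accounts for the connections $d+\omega$ on $\Ocal_{S}$; the identity element is precisely $(\Ocal_{S},d)$, since $d+\omega$ is gauge-equivalent to $d$ only when $\omega=d\log g$ for a global unit $g$. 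Concretely, then, $[\Lcal,\nabla]=0$ is equivalent to the existence of a nowhere-vanishing global holomorphic section $s$ that is flat, $\nabla s=0$. I expect this translation to be the only real conceptual point; everything afterwards is a matter of unwinding the definition of $\LOG$ and of $\nabla_{\LOG}$.

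Granting this, I would prove the ``if'' direction together with the final assertion at once. Given a holomorphic connection $\nabla$ with $[\Lcal,\nabla]=0$, fix a nowhere-vanishing flat holomorphic section $s$. Because $s$ is a global frame, every $e\in\Lcal^{\times}$ lying over $x\in S$ is uniquely of the form $e=\lambda\,s(x)$ with $\lambda\in\CBbb^{\times}$, and I would define $\LOG(e)=\log\lambda\in\CBbb/2\pi i\,\ZBbb$. The equivariance $\LOG(\mu e)=\log\mu+\LOG(e)$ is immediate, and $\exp\circ\LOG$ is exactly the fiber coordinate relative to $s$, hence holomorphic: the map $\Lcal^{\times}\isorightarrow S\times\CBbb^{\times}$, $e\mapsto(x,\lambda)$, induced by the holomorphic frame $s$ is biholomorphic. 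Thus $\LOG$ is a holomorphic logarithm. To identify $\nabla$ with $\nabla_{\LOG}$, I would evaluate the defining rule $\nabla_{\LOG}e/e=e^{\ast}(d\LOG)$ on $e=s$: by construction $\LOG\circ s\equiv 0$, so $s^{\ast}(d\LOG)=d(\LOG\circ s)=0$ and therefore $\nabla_{\LOG}s=0=\nabla s$. Two connections agreeing on a global frame agree everywhere, by the Leibniz rule, so $\nabla=\nabla_{\LOG}$; in particular the given $\nabla$ is associated to a logarithm.

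For the converse, I would start from a holomorphic logarithm $\LOG$ and set $\nabla=\nabla_{\LOG}$. By the discussion preceding the proposition, $\nabla_{\LOG}$ is flat, and it is a \emph{holomorphic} connection precisely because $\LOG$ is holomorphic, so $[\Lcal,\nabla]$ is defined. It remains to check that this class vanishes, for which I would again exhibit a flat frame. Since $\exp\circ\LOG$ is a nowhere-vanishing holomorphic function on $\Lcal^{\times}$ satisfying $\exp\LOG(\lambda e)=\lambda\,\exp\LOG(e)$, it is a holomorphic trivialization and determines a unique global holomorphic section $s$ with $\exp\circ\LOG(s)=1$, equivalently $\LOG\circ s\equiv 0$. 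Then $s^{\ast}(d\LOG)=d(\LOG\circ s)=0$, so $\nabla s=0$, giving $(\Lcal,\nabla)\cong(\Ocal_{S},d)$ and hence $[\Lcal,\nabla]=0$. This closes the equivalence and, combined with the previous paragraph, yields the statement in full.
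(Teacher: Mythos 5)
Your proof is correct. It rests on the same underlying fact as the paper's argument --- a holomorphic logarithm is precisely a flat holomorphic trivialization --- but you organize it differently. The paper computes $\mathbb{H}^{1}(S,\Omega_{S}^{\times})$ by a \v{C}ech resolution, represents $[\Lcal,\nabla]$ by a cocycle $(\lbrace\omega_{i}\rbrace,\lbrace f_{ij}\rbrace)$ relative to local frames $e_{i}$, and passes back and forth between a logarithm and a trivialization of that cocycle: a logarithm gives $f_{i}=\exp\LOG(e_{i})$ with $\omega_{i}=d\log f_{i}$, and conversely units $f_{i}$ with $\omega_{i}=d\log f_{i}$, $f_{ij}=f_{i}/f_{j}$ let one glue $\LOG(e_{i})=\log f_{i}$. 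You instead pivot on the global statement that $[\Lcal,\nabla]=0$ if and only if $(\Lcal,\nabla)\cong(\Ocal_{S},d)$, i.e.\ there is a nowhere-vanishing flat holomorphic section $s$, after which both implications are one-line manipulations with $s$ (and your identification of $\nabla$ with $\nabla_{\LOG}$ by comparing them on the frame $s$ is clean and complete). The two routes are close --- the paper's local data assemble to exactly your $s$, namely $s=e_{i}/f_{i}$ on each $U_{i}$ --- but yours is shorter and more invariant, at the cost of taking as input the classification of $\mathbb{H}^{1}(S,\Omega_{S}^{\times})$ by isomorphism classes of pairs, which the paper asserts beforehand but effectively re-derives inside its proof rather than quoting. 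The paper's explicit $f_{i}$'s also make the subsequent corollary (uniqueness of the associated $\LOG$ up to an additive constant) immediate; your approach recovers that just as easily, since two flat frames on connected $S$ differ by a constant, though you do not say so. One point worth making explicit in your write-up: the class $[\Lcal,\nabla]$ is only defined for \emph{flat} holomorphic connections (the cocycle condition includes $d\omega_{i}=0$), which matters when $\dim S>1$; your argument is consistent with this, since a connection admitting a global flat frame is automatically flat, but the hypothesis should be stated.
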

\begin{proof}
We compute the hypercohomology group $\mathbb{H}^{1}(S,\Omega_{S}^{\times})$ with a \v{C}ech resolution. Let $\Ucal=\lbrace U_{i}\rbrace_i$ be an open covering of $S$ by suitable open subsets, and such that $\Lcal$ admits a local holomorphic trivialization $e_{i}$ on $U_{i}$. Elements of $\mathbb{H}^{1}(S,\Omega_{S}^{\times})$ can be represented by couples $(\lbrace\omega_{i}\rbrace,\lbrace f_{ij}\rbrace)$ in
\begin{displaymath}
	\Ccal^{0}(\Ucal,\Omega_{S}^{1})\oplus\Ccal^{1}(\Ucal,\Ocal_{S}^{\times}),
\end{displaymath}
subject to the cocycle relation
\begin{displaymath}
	d\omega_{i}=0,\quad \omega_{i}-\omega_{j}=d\log (f_{ij}),\quad f_{ij}f_{jk}f_{ki}=1.
\end{displaymath}
Coboundaries are of the form
\begin{displaymath}
	\omega_{i}=d\log(f_i),\quad f_{ij}=f_{i}/f_{j}.
\end{displaymath}
Let $\LOG$ be a holomorphic logarithm on $\Lcal$. Then the attached flat connection $\nabla_{\LOG}$ has trivial class. Indeed, we put 
\begin{displaymath}
	f_{i}=\exp\LOG(e_i),\quad\omega_{i}=\frac{\nabla_{\LOG}e_i}{e_i}=d\log(f_i).
\end{displaymath}
Conversely, let $\nabla$ be a holomorphic connection on $\Lcal$ with vanishing class. We put
\begin{displaymath}
	\omega_{i}=\frac{\nabla e_{i}}{e_i},\quad e_{i}=f_{ij}e_{j}.
\end{displaymath}
The cocycle $(\lbrace\omega_{i}\rbrace,\lbrace f_{ij}\rbrace)$ is trivial. We can thus find units $f_{i}\in\Gamma(U_i,\Ocal_{S}^{\times})$ with
\begin{displaymath}
	\omega_{i}=d\log(f_i),\quad f_{ij}=f_i/f_j.
\end{displaymath}
Then, we can define a holomorphic logarithm $\LOG$ by imposing
\begin{displaymath}
	\LOG(e_{i})=\log(f_i)\mod 2\pi i\, \mathbb{Z},
\end{displaymath}
and extending trivially under the $\Gm$ action. For $\LOG$ to be well-defined, it is enough to observe that on overlaps we have, by definition
\begin{displaymath}
	\LOG(e_i)=\log(f_{ij})+\LOG(e_j),
\end{displaymath}
which is compatible with the $\Gm$ action. By construction, $\nabla=\nabla_{\LOG}$. As a complement, notice that if $\tilde{f}_i$ is another choice of functions, then necessarily $\tilde{f}_i=\lambda_i f_i$, for some nonvanishing constant $\lambda_i$. Moreover, $\lambda_{i}=\lambda_{j}$ because of the condition $\tilde{f}_i/\tilde{f}_j=f_{ij}=f_{i}/f_{j}$. Therefore, the change in $\LOG$ is just by a constant, as was to be expected.
\end{proof}
Actually, the proof of the proposition also gives:
\begin{corollary}
A holomorphic connection $\nabla$ on $\Lcal$ is associated to a $\LOG$ if, and only if, $[\Lcal,\nabla]=0$. In this case, the associated $\LOG$ is unique up to a constant.
\end{corollary}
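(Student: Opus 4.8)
The plan is to observe that the corollary is already contained in the \v{C}ech cocycle computation carried out in the proof of the Proposition; the only genuinely new point is the uniqueness assertion, so I would organize the argument around that.

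First I would dispose of the equivalence ``$\nabla$ associated to a $\LOG$ $\iff [\Lcal,\nabla]=0$'' by reading off both directions from the \v{C}ech description of $\mathbb{H}^{1}(S,\Omega_{S}^{\times})$. For the forward direction, if $\nabla=\nabla_{\LOG}$ then setting $f_{i}=\exp\LOG(e_{i})$ and $\omega_{i}=\nabla e_{i}/e_{i}=d\log(f_{i})$ exhibits the representing cocycle $(\lbrace\omega_{i}\rbrace,\lbrace f_{ij}\rbrace)$ as a coboundary, whence $[\Lcal,\nabla]=0$. Conversely, if $[\Lcal,\nabla]=0$, the same computation run backwards produces units $f_{i}\in\Gamma(U_{i},\Ocal_{S}^{\times})$ with $\omega_{i}=d\log(f_{i})$ and $f_{ij}=f_{i}/f_{j}$, and then $\LOG(e_{i}):=\log(f_{i})\mod 2\pi i\,\ZBbb$, extended $\Gm$-equivariantly, defines a holomorphic logarithm with $\nabla=\nabla_{\LOG}$. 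Both of these are verbatim the steps already performed for the Proposition, now read as a statement about the fixed connection $\nabla$ rather than as one about mere existence.

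The substantive step is uniqueness. Given two holomorphic logarithms $\LOG,\LOG'$ with $\nabla_{\LOG}=\nabla_{\LOG'}$, I would consider the difference $g:=\LOG-\LOG'\colon\Lcal^{\times}\to\CBbb/2\pi i\,\ZBbb$. Because both logarithms satisfy $\LOG(\lambda\cdot e)=\log\lambda+\LOG(e)$, the difference is $\Gm$-invariant and hence descends to a map $\bar g\colon S\to\CBbb/2\pi i\,\ZBbb$. Pulling back along any local frame $e$, the defining relation \eqref{eq:1} gives $e^{\ast}(dg)=e^{\ast}(d\LOG)-e^{\ast}(d\LOG')=0$, and since $g$ factors through $S$ this says precisely $d\bar g=0$. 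As $S$ is connected, a locally constant $\CBbb/2\pi i\,\ZBbb$-valued function is globally constant, so $\LOG-\LOG'$ is a single constant in $\CBbb/2\pi i\,\ZBbb$.

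The point requiring the most care, and the one I expect to be the only real obstacle, is this uniqueness descent: one must check that $\LOG-\LOG'$ genuinely descends to the base (which is where the $\Gm$-equivariance of \emph{both} logarithms is used) and that the vanishing of $d\bar g$ upgrades to a global constant via connectedness, all while keeping track of the ambiguity modulo $2\pi i\,\ZBbb$. The final paragraph of the Proposition's proof already records the corresponding statement in coordinates, namely that a second choice $\tilde f_{i}=\lambda_{i}f_{i}$ forces $\lambda_{i}=\lambda_{j}$ via the cocycle condition $\tilde f_{i}/\tilde f_{j}=f_{ij}=f_{i}/f_{j}$, so the invariant descent argument above is simply the coordinate-free reformulation of that remark.
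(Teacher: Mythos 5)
Your proposal is correct and follows essentially the same route as the paper, which simply observes that the corollary is already contained in the \v{C}ech computation of the Proposition's proof, with the uniqueness handled by the remark that a second choice $\tilde f_i=\lambda_i f_i$ forces $\lambda_i=\lambda_j$. Your coordinate-free descent argument for uniqueness is just a harmless repackaging of that same observation.
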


\subsection{Construction of naive logarithms}\label{section:LOG}
Let $\pi:\Xcal\rightarrow S$ be a smooth proper morphism of smooth quasi-projective complex varieties with connected fibers of relative dimension one. We assume given a fixed  section $\sigma:S\rightarrow\Xcal$ and $\Lcal$, $\Mcal$ holomorphic line bundles on $\Xcal$. We require $\Lcal$ comes with a rigidification (\emph{i.e.} a choice of trivialization) along $\sigma$. We consider relative connections
\begin{displaymath}
 	\nabla_{\Xcal/S}^{\Lcal}:\Lcal\rightarrow\Lcal\otimes\Acal^{1}_{\Xcal/S},\quad \nabla_{\Xcal/S}^{\Mcal}:\Mcal\rightarrow\Mcal\otimes\Acal^{1}_{\Xcal/S},
\end{displaymath}	
compatible with the holomorphic structures. Hence, the $(0,1)$ projection $(\nabla_{\Xcal/S}^{\Lcal})^{(0,1)}=\overline{\partial}_{\Lcal,}$, the relative Dolbeault operator on $\Lcal$, and similarly for $\Mcal$. We suppose that $\nabla_{\Xcal/S}^{\Lcal}$ is flat, but make no assumption on $\nabla_{\Xcal/S}^{\Mcal}$ for the time being. The connection on $\Lcal$ can be thought as a \emph{smooth family} (with respect to $S$) of holomorphic connections on $\Lcal$ restricted to fibers. Below we use this data to construct a smooth logarithm map on the Deligne pairing of $\Lcal$ and $\Mcal$:
\begin{displaymath}
	\LOG_{na}:\langle\Lcal,\Mcal\rangle^{\times}\longrightarrow\mathbb{C}/2\pi i\, \mathbb{Z}.
\end{displaymath}
and we compute its associated connection. This logarithm is defined so as to give a direct relationship with the intersection connection on $\langle\Lcal,\Mcal\rangle$. We anticipate, however,  some problems with this construction:
\begin{enumerate}
	\item it is only defined locally on contractible open subsets of $S$;
	\item it depends on auxiliary data which  prevents  an extension to the whole of $S$;
	\item while it depends on the connection $\nabla^{\Mcal}_{\Xcal/S}$, it nearly depends only on the holomorphic structure of $\Lcal$ on fibers (see Remark \ref{remark:dependence-holom} below for the precise meaning of this assertion) -- in particular, it cannot be compatible with the symmetry of Deligne pairings. 
\end{enumerate}
For these various reasons, we shall call it a \emph{naive logarithm}. 

Let $\nu_{\Lcal}:S\rightarrow H^{1}_{dR}(\Xcal/S)/R^{1}\pi_{\ast}(2\pi i\, \mathbb{Z})$ be the smooth classifying map of $(\Lcal,\nabla_{\Xcal/S}^{\Lcal})$. This map does not depend on the rigidification. Locally on contractible open subsets $S^{\circ}$ of $S$, we can lift $\nu_{\Lcal}$ to a smooth section of $H^{1}_{dR}(\Xcal/S)$, that we write $\tilde{\nu}$. We work over a fixed $S^{\circ}$ and make a choice of lifting $\tilde{\nu}$. We take the universal cover $\widetilde{\Xcal}\rightarrow\Xcal\mid_{S^{\circ}}$. Let $\ell$, $m$ be meromorphic sections of $\Lcal$ and $\Mcal$, whose divisors are finite and \'etale over $S^{\circ}$ (finite, flat and unramified). Using the rigidification $\sigma$ and $\nabla_{\Xcal/S}^{\Lcal}$ and a local lifting $\tilde{\sigma}$ to $\widetilde{\Xcal}$, the section $\ell$ and can be uniquely lifted to $\widetilde{\Xcal}$, as a meromorphic function on fibers, transforming under some character under the action of the fundamental group (the character depends on the fiber) and taking the value $1$ along $\tilde{\sigma}$. We denote this lift $\tilde{\ell}$. Precisely, if $\gamma\in\pi_{1}(\Xcal_s,\sigma(s))$, the transformation of $\tilde{\ell}$ on $\widetilde{\Xcal}_{s}$ with respect to translation by $\gamma$ is
\begin{equation}\label{eq:tr-law}
	\tilde{\ell}(\gamma z)=\exp\left(\int_{\gamma}\tilde{\nu}\mid_{\Xcal_s}\right)\tilde{\ell}(z),\quad z\in\widetilde{\Xcal}_s.
\end{equation}
Notice that the dependence of $\tilde{\ell}$ relative to the base $S^{\circ}$ is only $\Ccal^{\infty}$, because the connections were only assumed to depend smoothly on the horizontal directions. We declare
\begin{equation}\label{eq:7}
	\LOG_{na}(\langle\ell,m\rangle)=\log(\tilde{\ell}(\widetilde{\Div m}))-\int_{\tilde{\sigma}}^{\widetilde{\Div m}}\tilde{\nu}
	-\frac{i}{2\pi}\pi_{\ast}(\frac{\nabla m}{m}\wedge\tilde{\nu})\mod 2\pi i\, \mathbb{Z}\ .
\end{equation}
The index $\emph{na}$ stands for \emph{naive}. Let us clarify the construction:
\begin{enumerate}
\item The integrals are computed fiberwise. We notice that in the last term, there is no need for a global extension of the connection $\nabla_{\Xcal/S}^{\Mcal}$.
\item $\tilde{\nu}$ is to be understood as a differential form $\eta(z,s)$ on $\Xcal$, which is harmonic for fixed $s$ and represents $\tilde{\nu}$ fiberwise.\footnote{The construction of $\eta$ requires the intermediate choice of a metric on $T\Xcal\mid_{S^{\circ}}$ and the use of elliptic theory applied to Kodaira laplacians, with smooth dependence on a parameter. The vertical projection of $\eta$ does not depend on the choice of metric. This is particular to the case of curves.} There is an ambiguity in this representative: it is only unique up to $\pi^{-1}\Acal^{1}_{S^{\circ}}$. This does not affect the integrals, since they are computed on fibers. Therefore, we can choose to rigidify $\eta(z,s)$ by imposing it vanishes along the section $\sigma$.

\item the notation $\widetilde{\Div m}$ indicates a lift of $\Div m$ to the universal cover. Hence, if on a given fiber $\Xcal_{s}$ we have $\Div m=\sum_i n_i P_i$ (finite sum), then $\widetilde{\Div m}=\sum_i n_i\tilde{P}_i$, where the $\tilde{P}_i$ are choices of preimages of $P_i$ in the universal cover $\widetilde{\Xcal}_{s}$. With this understood, the first two terms in the definition of $\LOG_{na}$ expand to
\begin{displaymath}
	\sum_{i}n_{i}\log(\tilde{\ell}(\tilde{P}_{i}))-\sum_{i}n_{i}\int_{\tilde{\sigma}}^{\tilde{P}_i}\tilde{\nu}.
\end{displaymath}
The integration path from $\tilde{\sigma}$ to $\tilde{P}_{i}$ is taken in $\widetilde{\Xcal}_{s}$. This expression does not depend on the choice of liftings $\tilde{\sigma}$ and $\tilde{P}_i$, modulo $2\pi i\, \ZBbb$. For instance, if $P$ and $\gamma P$ are points in $\widetilde{\Xcal}_{s}$ differing by the action of $\gamma\in\pi_{1}(\Xcal_{s},\sigma(s))$, then
\begin{displaymath}
	\begin{split}
		\log(\tilde{\ell}(\gamma P))-\int_{\tilde{\sigma}(s)}^{\gamma P}\tilde{\nu}&=\int_{\gamma}\tilde{\nu}+\log(\tilde{\ell}(P))
		-\int_{\tilde{\sigma}(s)}^{P}\tilde{\nu}-\int_{P}^{\gamma P}\tilde{\nu} \mod 2\pi i\, \mathbb{Z}\\
		&=\log(\tilde{\ell}(P))-\int_{\tilde{\sigma}(s)}^{P}\tilde{\nu}\mod 2\pi i\, \mathbb{Z}\ .
	\end{split}
\end{displaymath}
And if we change the lifting $\tilde{\sigma}$ to $\sigma^{\ast}=\gamma\tilde{\sigma}$, then the new lifting of $\ell$ is $\ell^{\ast}$ with
\begin{displaymath}
	\ell^{\ast}(z)=\exp\left(-\int_{\gamma}\tilde{\nu}\right)\tilde{\ell}(z),
\end{displaymath}
and from this relation it follows the independence of the lift $\tilde{\sigma}$ modulo $2\pi i\, \ZBbb$. 
\item There are several facts that can be checked similarly to our previous work \cite[Sec. 3 and 4]{FreixasWentworth:15}. For instance, the compatibility to the relations defining the Deligne pairing, most notably under the change $f\mapsto fm$ ($f$ a rational function), follows from various reciprocity laws for differential forms, plus the observation
\begin{displaymath}
	\pi_{\ast}\left(\frac{df}{f}\wedge\tilde{\nu}\right)=\partial\pi_{\ast}(\log|f|^{2}\cdot\tilde{\nu})=0.
\end{displaymath}
Here we used that $\tilde{\nu}$ as above is fiberwise $\partial$-closed (by harmonicity), and also that $\tilde{\nu}$ is a 1-form while $\pi$ reduces types by $(1,1)$. 
\end{enumerate}
With this understood, we conclude that $\LOG_{na}$ is a smooth logarithm for $\langle\Lcal,\Mcal\rangle_{\mid S^{\circ}}$. Let us start exploring the dependence of the naive logarithm on the connections.
\begin{lemma}\label{lemma:change-nu-theta}
Let $\theta$ be a differential 1-form on $\Xcal_{\mid S^{\circ}}$, holomorphic on fibers. Assume that $\nabla_{\Xcal/S}^{\Mcal}$ is either flat or the relative Chern connection of a smooth hermitian metric on $\Mcal$. Then the definition of $\LOG_{na}$ is invariant under the change $\nabla_{\Xcal/S}^{\Lcal}\mapsto\nabla_{\Xcal/S}^{\Lcal}+\theta$.
\end{lemma}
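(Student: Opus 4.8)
The plan is to track how the two pieces of data entering \eqref{eq:7}, namely the fiberwise harmonic form $\tilde{\nu}$ and the lift $\tilde{\ell}$ of $\ell$, transform under the replacement $\nabla^{\Lcal}_{\Xcal/S}\mapsto\nabla^{\Lcal}_{\Xcal/S}+\theta$, and then to substitute these into the defining formula and collect the resulting variations term by term. Since the data attached to $m$ and $\Mcal$ are untouched by a change of connection on $\Lcal$, the only places the variation can enter are through $\tilde{\nu}$, through $\tilde{\ell}$, and through the last integral. The whole point will be that the variations of the first two terms cancel and that of the third vanishes for type reasons.

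First I would record the effect on $\tilde{\nu}$. Adding $\theta$ to $\nabla^{\Lcal}_{\Xcal/S}$ shifts the relative de Rham class of $(\Lcal,\nabla^{\Lcal}_{\Xcal/S})$, hence the classifying section $\nu_{\Lcal}$, by the fiberwise class of $\theta$. Because $\theta$ is holomorphic on fibers it is fiberwise closed and fiberwise harmonic, so it is its own harmonic representative; consequently the fiberwise harmonic lift changes by $\tilde{\nu}\mapsto\tilde{\nu}+\theta$. The horizontal correction needed to preserve the normalization of $\eta$ along $\sigma$ is a pullback from the base, hence vertically trivial and invisible to all the fiberwise integrals and to $\pi_{\ast}$. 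Next, from $\nabla^{\Lcal}_{\Xcal/S}\tilde{e}=0$ for a flat frame $\tilde{e}$ one has $d\log\tilde{\ell}=\nabla^{\Lcal}_{\Xcal/S}\ell/\ell$ along fibers; replacing the connection by $\nabla^{\Lcal}_{\Xcal/S}+\theta$ and using that $\theta$ is exact on the simply connected cover, together with the normalization $\tilde{\ell}(\tilde{\sigma})=1$, gives
\[
	\tilde{\ell}(z)\longmapsto \exp\left(\int_{\tilde{\sigma}}^{z}\theta\right)\tilde{\ell}(z).
\]
One checks directly that this is compatible with the transformation law \eqref{eq:tr-law} for the new representative $\tilde{\nu}+\theta$, which serves as the consistency test for the two variations just found.

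Substituting into \eqref{eq:7}, the variation of the first term is $+\int_{\tilde{\sigma}}^{\widetilde{\Div m}}\theta$ and that of the second term is $-\int_{\tilde{\sigma}}^{\widetilde{\Div m}}\theta$, so these cancel exactly. The variation of the last term is
\[
	-\frac{i}{2\pi}\pi_{\ast}\left(\frac{\nabla m}{m}\wedge\theta\right),
\]
and here the hypothesis on $\nabla^{\Mcal}_{\Xcal/S}$ enters: whether $\nabla^{\Mcal}_{\Xcal/S}$ is flat or is the relative Chern connection of a hermitian metric, it is compatible with the holomorphic structure, so its logarithmic derivative $\nabla m/m$ is of type $(1,0)$ along the fibers, the $(0,1)$ part being $\ov\partial_{\Mcal}m/m=0$ for $m$ fiberwise holomorphic. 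Since $\theta$ is likewise $(1,0)$ along the fibers, the product $\tfrac{\nabla m}{m}\wedge\theta$ restricts to a $(2,0)$-form on each curve $\Xcal_{s}$ and hence vanishes fiberwise; therefore its fiber integral $\pi_{\ast}(\cdots)$ is zero. Collecting the three contributions shows that $\LOG_{na}$ is unchanged.

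The main obstacle is the bookkeeping of the second step rather than any hard analysis: one must verify that the harmonic representative moves by exactly $\theta$, with no metric-dependent correction (which is precisely why holomorphicity of $\theta$, forcing harmonicity, is used), and that the induced normalization shift of $\tilde{\ell}$ is the asserted one, keeping careful track of the ambiguities modulo $2\pi i\,\ZBbb$ and of the independence of the choices of lifts $\tilde{\sigma}$ and $\tilde{P}_{i}$ to the universal cover, exactly as in the well-definedness discussion following \eqref{eq:7}. Once the transformations $\tilde{\nu}\mapsto\tilde{\nu}+\theta$ and $\tilde{\ell}\mapsto\exp(\int_{\tilde{\sigma}}^{z}\theta)\,\tilde{\ell}$ are in place, the cancellation of the first two terms and the type vanishing of the third are immediate.
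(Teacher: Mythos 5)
Your proposal is correct and follows essentially the same route as the paper: you identify the transformations $\tilde{\nu}\mapsto\tilde{\nu}+\theta$ and $\tilde{\ell}\mapsto\exp(\int_{\tilde{\sigma}}^{z}\theta)\,\tilde{\ell}$, cancel the first two terms of \eqref{eq:7}, and kill the variation of the third term by a fiberwise type argument. The only (harmless) divergence is in the Chern-connection case, where the paper proves the stronger fact that the third term vanishes identically via harmonicity and a residue argument, whereas you observe — correctly, and slightly more economically — that the uniform $(1,0)\wedge(1,0)$ type reason already kills its \emph{variation}, which is all the lemma needs.
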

\begin{proof}
Notice the change of connections translates into changing $\tilde{\nu}$ by $\tilde{\nu}+\theta$. For the new connection, the lift $\tilde{\ell}'$ compares to $\tilde{\ell}$ by
\begin{displaymath}
	\tilde{\ell}'(z)=\exp\left(\int_{\tilde{\sigma}}^{z}\theta\right)\tilde{\ell}(z),\quad z\in\widetilde{\Xcal}_{s}.
\end{displaymath}
Therefore
\begin{displaymath}
	\log(\tilde{\ell}'(z))-\int_{\tilde{\sigma}(s)}^{z}(\tilde{\nu}+\theta)
	=\log(\tilde{\ell}(z))-\int_{\tilde{\sigma}(s)}^{z}\tilde{\nu}\mod 2\pi i\, \ZBbb.
\end{displaymath}
This settles the first two terms. For the last term in $\LOG_{na}$, we first suppose $\nabla_{\Xcal/S}^{\Mcal}$ is flat. Hence it is holomorphic on fibers, and we have
\begin{displaymath}
	\int_{\Xcal_{s}}\frac{\nabla m}{m}\wedge(\tilde{\nu}+\theta)=\int_{\Xcal_{s}}\frac{\nabla m}{m}\wedge\tilde{\nu},
\end{displaymath}
for type reasons: both $\nabla m/m$ and $\theta$ are of type $(1,0)$ on fibers. If $\nabla_{\Xcal/S}^{\Mcal}$ is the Chern connection of a smooth hermitian metric $\|\cdot\|$ on $\Mcal$, then the last term actually vanishes! Indeed,
\begin{align*}
	\int_{\Xcal_{s}}\frac{\nabla m}{m}\wedge\tilde{\nu}&=\int_{\Xcal_{s}}\partial\log\|m\|^{2}\wedge\widetilde{\nu} 
	=\int_{\Xcal_{s}}\partial\log\|m\|^{2}\wedge\widetilde{\nu}'' \\
	&=\int_{\Xcal_{s}}\partial(\log\|m\|^{2}\cdot\widetilde{\nu}'') =\int_{\Xcal_{s}}d(\log\|m\|^{2}\cdot\widetilde{\nu}'') \\
	&=0\ .
\end{align*}
We again  used that the vertical representatives of $\tilde{\nu}$ are harmonic and that the singular differential form $\log\|m\|^{2}\cdot\widetilde{\nu}''$ has no residues on $\Xcal_s$. This concludes the proof.
\end{proof}
\begin{remark}\label{remark:dependence-holom}
The content of the lemma is that for these particular connections on $\Mcal$, $\LOG_{na}$  nearly depends only on the (relative) Chern connection on $\Lcal$. The only subtle point is that for this to be entirely true, we would need the invariance of $\LOG_{na}$ under the additional transformation $\tilde{\nu}\mapsto\tilde{\nu}+\theta$, for $\theta$ a horizontal section of $R^{1}\pi_{\ast}(2\pi i\, \ZBbb)\mid_{S^{\circ}}$. This is however not the case! This issue will be addressed by considering the conjugate family at the same time. The resulting logarithm will then depend on the full connection $\nabla_{\Xcal/S}^{\Lcal}$.
\end{remark}
Let us now focus on the case when $\Mcal$ is endowed with a Chern connection. 
\begin{lemma}\label{lemma:chern-on-M}
Assume $\nabla^{\Mcal}_{\Xcal/S}$ is the relative Chern connection of a smooth hermitian metric on $\Mcal$. Endow the line bundle $\Mcal\otimes\Ocal(-(\deg\Mcal)\sigma)$, of relative degree 0, with a relative flat unitary connection. Finally, equip $\sigma^{\ast}(\Lcal)$ with the holomorphic logarithm induced by the rigidification $\sigma^{\ast}(\Lcal)\isorightarrow\Ocal_{S}$. Then, the isomorphism of Deligne pairings
\begin{displaymath}
	\langle\Lcal,\Mcal\rangle\isorightarrow\langle\Lcal,\Mcal\otimes\Ocal(-(\deg\Mcal)\sigma)\rangle\otimes\sigma^{\ast}(\Lcal)^{\otimes\deg\Mcal}
\end{displaymath}
is compatible with the respective logarithms. In particular, the naive logarithm on $\langle\Lcal,\Mcal\rangle$ does not depend on the particular choice of Chern connection $\nabla^{\Mcal}_{\Xcal/S}$.
\end{lemma}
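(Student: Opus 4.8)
The plan is to verify the compatibility on a single Deligne symbol and to reduce the only possible discrepancy to the vanishing of a curvature term of exactly the type already controlled in Lemma \ref{lemma:change-nu-theta}. Write $d=\deg\Mcal$ and $\Mcal_0=\Mcal\otimes\Ocal(-d\sigma)$, let $s$ be the canonical section of $\Ocal(d\sigma)$ with $\Div s=d\sigma$, choose a meromorphic section $m_0$ of $\Mcal_0$ in relative general position with $\ell$ and with $\sigma$, and set $m=m_0\cdot s$, so that $\Div m=\Div m_0+d\sigma$. Under the functorial isomorphism of Deligne pairings together with the canonical identification $\langle\Lcal,\Ocal(d\sigma)\rangle\isorightarrow\sigma^\ast(\Lcal)^{\otimes d}$ sending $\langle\ell,s\rangle$ to $(\sigma^\ast\ell)^{\otimes d}$, the symbol $\langle\ell,m\rangle$ is sent to $\langle\ell,m_0\rangle\otimes(\sigma^\ast\ell)^{\otimes d}$. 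Since the objects being compared are genuine logarithms (equivariant trivializations) of the respective pairings, it suffices to match them on this single frame; in particular the result will not depend on the auxiliary choice of $m_0$.

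First I would compute the left-hand side. As $\nabla^\Mcal_{\Xcal/S}$ is a Chern connection, the last term of \eqref{eq:7} vanishes by the computation in the proof of Lemma \ref{lemma:change-nu-theta}, so
\[
	\LOG_{na}(\langle\ell,m\rangle)=\log\tilde\ell(\widetilde{\Div m})-\int_{\tilde\sigma}^{\widetilde{\Div m}}\tilde\nu .
\]
Lifting $\Div m=\Div m_0+d\sigma$ to $\widetilde{\Div m}=\widetilde{\Div m_0}+d\tilde\sigma$ and using additivity of both terms in the divisor, the $d\sigma$-part contributes $d\log\tilde\ell(\tilde\sigma)$ to the first term and $d\int_{\tilde\sigma}^{\tilde\sigma}\tilde\nu=0$ to the second. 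By construction of the lift $\tilde\ell$ from the rigidification of $\Lcal$ along $\sigma$, the value $\log\tilde\ell(\tilde\sigma)$ coincides with $\LOG_{\mathrm{rig}}(\sigma^\ast\ell)$, the rigidification logarithm evaluated on $\sigma^\ast\ell$. Hence
\[
	\LOG_{na}(\langle\ell,m\rangle)=\Big(\log\tilde\ell(\widetilde{\Div m_0})-\int_{\tilde\sigma}^{\widetilde{\Div m_0}}\tilde\nu\Big)+d\,\LOG_{\mathrm{rig}}(\sigma^\ast\ell).
\]

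Next I would treat the right-hand side, whose logarithm is $\LOG_{na}^{(0)}(\langle\ell,m_0\rangle)+d\,\LOG_{\mathrm{rig}}(\sigma^\ast\ell)$, where $\LOG_{na}^{(0)}$ is the naive logarithm attached to the flat unitary connection $\nabla_0$ on $\Mcal_0$. Here $\tilde\ell$ and $\tilde\nu$ are the same objects as above, since they depend only on $\Lcal$, $\nabla^\Lcal_{\Xcal/S}$, the rigidification, and the chosen lift. Comparing with the previous display, the two sides agree modulo $2\pi i\,\ZBbb$ precisely when the curvature term of $\LOG_{na}^{(0)}$ vanishes, i.e.\ when
\[
	\pi_\ast\Big(\frac{\nabla_0 m_0}{m_0}\wedge\tilde\nu\Big)=0 .
\]
This is the crux, and the point where I expect the only real work to lie. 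The key observation is that a flat unitary connection on the relative degree-zero bundle $\Mcal_0$ is the Chern connection of its fibrewise flat hermitian metric $\|\cdot\|_0$; consequently $\nabla_0 m_0/m_0=\partial\log\|m_0\|_0^2$ on fibers, and the computation becomes identical to the Chern case of Lemma \ref{lemma:change-nu-theta}: only the $(0,1)$-part $\tilde\nu''$ contributes, and since $\tilde\nu''$ is $\partial$-closed by harmonicity,
\[
	\int_{\Xcal_s}\frac{\nabla_0 m_0}{m_0}\wedge\tilde\nu=\int_{\Xcal_s}\partial\big(\log\|m_0\|_0^2\cdot\tilde\nu''\big)=\int_{\Xcal_s}d\big(\log\|m_0\|_0^2\cdot\tilde\nu''\big)=0
\]
by Stokes, the singular integrand having no residues on $\Xcal_s$. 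This yields the desired equality and hence the compatibility with the logarithms.

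Finally, the stated independence is an immediate corollary: the right-hand side refers only to the flat metric on $\Mcal_0$ and to the rigidification, and not at all to the Chern connection $\nabla^\Mcal_{\Xcal/S}$; the equality therefore forces $\LOG_{na}$ on $\langle\Lcal,\Mcal\rangle$ to be independent of the chosen Chern connection. The main obstacle, as indicated, is the vanishing of the flat-unitary curvature term; once the flat unitary connection is recognized as a Chern connection, this is precisely the residue/Stokes computation already established in the proof of Lemma \ref{lemma:change-nu-theta}.
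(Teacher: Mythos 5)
Your proof is correct and follows essentially the same route as the paper's: both reduce to the vanishing of the curvature term for Chern connections (the Stokes/residue computation from the proof of Lemma \ref{lemma:change-nu-theta}, which applies equally to the flat unitary connection on $\Mcal\otimes\Ocal(-(\deg\Mcal)\sigma)$ since it is the Chern connection of a fibrewise flat metric), and then split $\widetilde{\Div m}$ into $\widetilde{\Div m_0}+(\deg\Mcal)\tilde\sigma$, matching the $\tilde\sigma$-contribution of $\log\tilde\ell$ with the rigidification logarithm of $\sigma^{\ast}\ell$ and noting the integral term contributes nothing there. You are somewhat more explicit than the paper about the flat-unitary curvature term, but the argument is the same.
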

\begin{proof}
For the Deligne pairing on the left hand side, as we already saw in the proof of Lemma \ref{lemma:change-nu-theta}, we have
\begin{displaymath}
	\LOG_{na}(\langle\ell,m\rangle)=\log(\tilde{\ell}(\widetilde{\Div m}))-\int_{\tilde{\sigma}}^{\widetilde{\Div m}}\tilde{\nu},
\end{displaymath}
because $\nabla^{\Mcal}_{\Xcal/S}$ is a Chern connection. Assume now that $\ell$ does not have a pole or a zero along $\sigma$. Then, by the very construction of $\tilde{\ell}$, we have on the one hand
\begin{displaymath}
	\log(\tilde{\ell}(\widetilde{\Div m}))=\log(\tilde{\ell}(\widetilde{\Div m}-(\deg\Mcal)\tilde{\sigma}))+(\deg\Mcal)\log(\sigma^{\ast}\ell),
\end{displaymath}
while on the other hand it is obvious that
\begin{displaymath}
	\int_{\tilde{\sigma}}^{\widetilde{\Div m}}\tilde{\nu}=\int_{\tilde{\sigma}}^{\widetilde{\Div m}-(\deg\Mcal)\tilde{\sigma}}\tilde{\nu}.
\end{displaymath}
The lemma follows from these observations.
\end{proof}

\subsection{The connection attached to a naive logarithm}
We maintain the notations so far. We wish to compute the connection associated to $\LOG_{na}$, this is $d\LOG_{na}$. This requires differentiation of functions on $S^{\circ}$ of the form
\begin{displaymath}
	\int_{\tilde{\sigma}}^{\widetilde{\Div m}}\tilde{\nu},\quad\pi_{\ast}\left(\frac{\nabla m}{m}\wedge\tilde{\nu}\right).
\end{displaymath}
For instance, in the first integral we have to deal with the horizontal variation of both $\widetilde{\Div m}$ and $\tilde{\nu}$ (we are allowed to suppose that $\widetilde{\Div m}$ is given by sections, after possibly changing $S^{\circ}$ by some open cover). The path of integration, in fibers, from $\tilde{\sigma}$ to $\widetilde{\Div m}$ can be seen as a smooth family of currents on fibers. We need to explain how to differentiate these. For lack of an appropriate reference, we elaborate on this question below.

\subsubsection{Families of currents on cohomology classes and differentiation} A family of currents on $S$ relative to a smooth and proper morphism $\pi\colon\Xcal\to S$, of degree $d$, is a section of the sheaf of $\Ccal^{\infty}_{S}$-modules $\pi_{\ast}(\Dcal_{\Xcal/\Scal}^{d})$, where $\Dcal_{\Xcal/\Scal}^{d}$ is the (sheafified) $\Ccal^{\infty}_{S}$-linear topological dual of the sheaf $\Acal^{n-d}_{\Xcal/S,0}$ of smooth relative differentials with compact support. In other words, by definition, $\Dcal_{\Xcal/\Scal}^{d}$ is the subsheaf of the sheaf of currents $\Dcal_{\Xcal}^{d}$ on $\Xcal$ which are $\pi^{-1}\Ccal^{\infty}_{S}$-linear (compatibly with multiplication of currents by smooth functions) and vanish on $(\Acal_{\Xcal}^{p}\wedge\pi^{\ast}\Acal^{n-d-p}_{S})\cap\Acal^{n-d}_{\Xcal,0}$ for $0\leq p<n-d$ (the index $0$ indicates compact support). The space of sections of $\Dcal_{\Xcal/S}^{d}(U)$ over an open $U\subset\Xcal$ is a closed subspace of $\Dcal_{\Xcal}^{d}(U)$. Because $\pi$ is proper, we have a pairing
\begin{displaymath}
	\pi_{\ast}(\Dcal_{\Xcal/\Scal}^{d})\times\pi_{\ast}(\Acal^{n-d}_{\Xcal/S,0})\longrightarrow\Ccal^{\infty}_{S,0}.
\end{displaymath}
Write $T_{\bullet}\in\pi_{\ast}(\Dcal_{\Xcal/\Scal}^{d})(U)$ for a smooth family of currents, viewed as an association $s\mapsto T_{s}$, where $T_{s}$ is a current on $\Xcal_{s}$, smoothly depending on $s\in U$. This can be rigorously formulated as follows. Let $\theta$ be a degree $n-d$ differential form on a given fiber $\Xcal_{s_{0}}$. Because $\pi$ is submersive, it can be trivialized in a neighborhood of $s_{0}$. Let $\varphi$ be a smooth function on $S$, with compact support in a neighborhood of $s_{0}$, and taking the value 1 in a neighborhood of $s_{0}$. Using the trivialization of $\pi$ and the function $\varphi$, one extends $\theta$ to a compactly supported form $\widetilde{\theta}$ on $\Xcal$, with support in a neighborhood of the fiber $\Xcal_{s_{0}}$. Then we put $T_{s_{0}}(\theta)=T_{\bullet}(\widetilde{\theta})(s_{0})$. The construction does not depend on any choices. Indeed, let $\widetilde{\theta}$ and $\widetilde{\theta}'$ be two such extensions, depending on local trivializations and choices of compactly supported functions $\varphi$ and $\varphi'$ on $S$, as before. Then, one can write $\widetilde{\theta}=\widetilde{\theta}'+\sum_{i=1}^{\dim S}\rho_{i}\omega_{i}$, where the $\rho_{i}$ are smooth functions on $S$, vanishing at $s_{0}$, and the $\omega_{i}$ are smooth differential forms on $\Xcal$ with compact supports. But because $T_{\bullet}$ is $\Ccal^{\infty}_{S}$-linear, compatible with multiplication of currents by smooth functions, we have $T_{\bullet}(\rho_{i}\omega_{i})=\rho_{i} T_{\bullet}(\omega_{i})$, which vanishes at $s_{0}=0$. Thus, $T_{\bullet}(\widetilde{\theta})(s_{0})=T_{\bullet}(\widetilde{\theta}')(s_{0})$. Furthermore, if $s\mapsto \theta_{s}$ is a smooth family of differential forms on fibers $\Xcal_{s}$, then $s\mapsto T_{s}(\theta_{s})$ is a smooth function. Both points of view, the sheaf theoretic one and $s\mapsto T_{s}$, are easily seen to be equivalent. We will confuse them from now on.

A smooth family of currents can be differentiated with respect to the parameter space $S$. It gives raise to a smooth family of currents with values in 1-differential forms, given by a pairing
\begin{displaymath}
	\pi_{\ast}(\Dcal_{\Xcal/\Scal}^{d})\times\pi_{\ast}(\Acal^{n-d}_{\Xcal/S,0})\longrightarrow\Acal^{1}_{S,0}.
\end{displaymath}
Locally on $S$, we can trivialize $\Acal^{1}_{S}$ as a sheaf of $\Ccal^{\infty}_{S}$-modules (by taking a basis of smooth vector fields) and see such objects as vectors of smooth families of currents. This is legitimate, since our relative currents are $\Ccal^{\infty}_{S}$-linear, and any two (local) basis of vector fields on $S$ differ by a matrix of $\Ccal^{\infty}_{S}$ coefficients. We can now iterate this procedure, and talk about families of currents with values in differential forms of any degree, and differentiate them. The differential of a family of currents $T_{\bullet}$ is denoted $d_{S}T_{\bullet}$. One checks $d_{S}^{2}=0$.  

Assume now that the morphism $\pi$ is of relative dimension 1, as is the case in this article. Then we can extend families of currents to relative cohomology classes. We begin with $T_{\bullet}$ a smooth family of currents of degree $n-1$, with values in differential forms of degree $d$. To simplifiy the notations, we assume $T_{\bullet}$ is defined over the whole $S$. Let $\theta$ be a smooth section of $H^{1}_{dR}(\Xcal/S)$. We define a differential form $T_{\bullet}(\theta)$ on $S$, by using harmonic representatives: relative to a contractible $S^{\circ}$, we represent $\theta$ by a smooth family of differential forms $\eta(z,s)$ on fibers $\Xcal_s$, which are harmonic for fixed $s\in S^{\circ}$. Then, on $S^{\circ}$ we put
\begin{displaymath}
	T_{\bullet}(\theta)\mid_{S^{\circ}}=(T_{\bullet})\mid_{S^{\circ}}(\eta)\in\Acal^{d}_{S}(S_{0}).
\end{displaymath}
Here the construction is best understood in the interpretation $s\mapsto T_{s}$ of smooth families of currents. Because the harmonic representative $\eta$ is unique modulo $\pi^{-1}\Acal^{1}_{S^{\circ}}$ and $T_{\bullet}$ is a relative current, the expression $T_{\bullet}(\theta)\mid_{S^{\circ}}$ is well defined and can be globalized to the whole $S$. We write the resulting differential form $T_{\bullet}(\theta)$. 

Let $\nabla_{\GM}\colon H^{1}_{dR}(\Xcal/S)\rightarrow H^{1}_{dR}(\Xcal/S)\otimes\Acal^{1}_{S}$ be the Gauss-Manin connection. With the previous notation for $T_{\bullet}$ and $\theta$, we can also define $T_{\bullet}(\nabla_{\GM}\theta)$, by the following prescription. On contractible $S^{\circ}$ we write
\begin{displaymath}
	\nabla_{\GM}\theta=\sum_{i}\theta_{i}\otimes \beta_i,
\end{displaymath}
where the $\theta_i$ are flat sections of $H^{1}(\Xcal/S)\bigr|_{S^{\circ}}$, and the $\beta_i$ are smooth 1-forms on $S^{\circ}$. Then, because $T_\bullet$ is $\Ccal_{S}^{\infty}$-linear, the expression
\begin{displaymath}
	T_{\bullet}(\nabla_{\GM}\theta):=\sum_{i}T_{\bullet}(\theta_i)\wedge \beta_i.
\end{displaymath}
 is independent of choices made.  Hence, it is well-defined and extends to $S$.

With these conventions, the following differentiation rule is easily checked:
\begin{equation}\label{eq:current_diff_2}
	d_{S}(T_{\bullet}(\theta))=(d_{S} T_{\bullet})(\theta)+(-1)^{d}T_{\bullet}(\nabla_{\GM}\theta).
\end{equation}
To do so, one computes the Gauss-Manin connection by locally trivializing the family and applying Stokes' theorem (this is the so-called Cartan-Lie formula \cite[Sec. 9.2.2]{Voisin}). Alternatively, the equation is an easy consequence of the construction of the canonical extension of a relative flat connection in \cite[Sec. 4]{FreixasWentworth:15}, in this case applied to the relative connection determined by $\theta$ (an auxiliary local choice of a section of $\pi$ is needed). Notice that the rule is indeed compatible with the expected property $d_{S}^2=0$ on families of currents, as we see by applying $d_{S}$ to \eqref{eq:current_diff_2} and recalling that $\nabla_{\GM}^2=0$. 
The main examples of currents that will fit into this framework are currents of integration against differential forms or families of paths, as in the proposition below.

\subsubsection{Differentiation of naive logarithms} The next statement provides an illustration of differentiation of currents on cohomology classes in the context of naive logarithms.

\begin{proposition}\label{prop:LOG-int}
Let $(\Lcal,\nabla_{\Xcal/S}^{\Lcal})$, $(\Mcal,\nabla_{\Xcal/S}^{\Mcal})$, $\tilde{\nu}$ be as above. Suppose given a global extension $\nabla^{\Mcal}:\Mcal\rightarrow\Mcal\otimes\Acal^{1}_{\Xcal/\CBbb}$ of $\nabla_{\Xcal/S}^{\Mcal}$, compatible with the holomorphic structure. Then
\begin{displaymath}
	d\LOG_{na}\langle\ell,m\rangle=\frac{\nabla_{\langle\Lcal,\Mcal\rangle}^{int}\langle\ell,m\rangle}{\langle\ell,m\rangle}
	-\frac{i}{2\pi}\pi_{\ast}(F_{\nabla^{\Mcal}}\wedge\tilde{\nu}),
\end{displaymath}
where $F_{\nabla^{\Mcal}}$ is the curvature of $\nabla^{\Mcal}$.
\end{proposition}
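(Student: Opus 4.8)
The plan is to differentiate the three summands of \eqref{eq:7} separately and to recognize the total as the intersection connection form plus the stated curvature correction. Throughout I use the calculus of families of currents on cohomology classes, and in particular the differentiation rule \eqref{eq:current_diff_2}. Since $\Div\ell$ and $\Div m$ are in relative general position, $\tilde\ell$ is regular and nonvanishing along $\widetilde{\Div m}$, and I work over a contractible $S^{\circ}$ on which $\Div m=\sum_i n_i P_i(s)$ is given by sections, writing $\tilde P_i$ for the induced lifts. Recall that in the flat trivialization on fibers one has $\frac{\nabla^{\Lcal}_{\Xcal/S}\ell}{\ell}=d_{\mathrm{fib}}\log\tilde\ell$, so that $d\log\tilde\ell$ (the full exterior derivative on $\widetilde{\Xcal}$) and the canonical extension are related by $\frac{\widetilde\nabla\ell}{\ell}=d\log\tilde\ell-A$, where $A$ is the horizontal correction from \cite[Sec.\ 4]{FreixasWentworth:15} that forces $\frac{\widetilde\nabla\ell}{\ell}$ to descend from $\widetilde{\Xcal}$ to $\Xcal$ and to satisfy the reciprocity law (WR).

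First I would treat the last summand, $-\frac{i}{2\pi}\pi_{\ast}(\frac{\nabla m}{m}\wedge\tilde\nu)$, using the global extension $\nabla^{\Mcal}$ (legitimate, since only the vertical part of $\tilde\nu$ enters the fiber integral, so the relative and global connections give the same value). Viewing this as $T_{\bullet}(\tilde\nu)$ for the relative degree-one, degree-zero-valued current $T_{\bullet}(\theta)=-\frac{i}{2\pi}\pi_{\ast}(\frac{\nabla^{\Mcal}m}{m}\wedge\theta)$, rule \eqref{eq:current_diff_2} gives $d_S(T_\bullet(\tilde\nu))=(d_S T_\bullet)(\tilde\nu)+T_\bullet(\nabla_{\GM}\tilde\nu)$. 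The density of $d_S T_\bullet$ is governed by $d(\frac{\nabla^{\Mcal}m}{m})=F_{\nabla^{\Mcal}}+2\pi i\,[\Div m]$, the second term being the Poincar\'e--Lelong residue. Hence $(d_S T_\bullet)(\tilde\nu)$ splits into the smooth contribution $-\frac{i}{2\pi}\pi_{\ast}(F_{\nabla^{\Mcal}}\wedge\tilde\nu)$, which is exactly the correction term in the statement, and the residue contribution $-\frac{i}{2\pi}\cdot 2\pi i\sum_i n_i\,\tilde P_i^{\ast}\tilde\nu=\sum_i n_i\,\tilde P_i^{\ast}\tilde\nu$.

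Next I would differentiate the first two summands $\sum_i n_i\big[\log\tilde\ell(\tilde P_i)-\int_{\tilde\sigma}^{\tilde P_i}\tilde\nu\big]$. The endpoint evaluation contributes $\sum_i n_i\,\tilde P_i^{\ast}(d\log\tilde\ell)$, and differentiating the family of path integrals by the Cartan--Lie/Stokes description of $d_S$ of a family of $1$-chains produces a moving-endpoint boundary term $-\sum_i n_i\,\tilde P_i^{\ast}\tilde\nu$ (the $\tilde\sigma$-endpoint contribution vanishing because $\tilde\nu$ is rigidified to vanish along $\sigma$, where also $\tilde\ell\equiv 1$) together with a Gauss--Manin term $-\sum_i n_i\int_{\tilde\sigma}^{\tilde P_i}\nabla_{\GM}\tilde\nu$. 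The boundary term $-\sum_i n_i\,\tilde P_i^{\ast}\tilde\nu$ cancels exactly the residue contribution $+\sum_i n_i\,\tilde P_i^{\ast}\tilde\nu$ isolated above. Using $P_i^{\ast}A=\int_{\tilde\sigma}^{\tilde P_i}\nabla_{\GM}\tilde\nu$ (this is precisely how the horizontal part $A$ of the canonical extension is built, the sum over $\Div m$ being well defined by (WR)), the remaining $\sum_i n_i\big[\tilde P_i^{\ast}(d\log\tilde\ell)-\int_{\tilde\sigma}^{\tilde P_i}\nabla_{\GM}\tilde\nu\big]$ assembles into $\sum_i n_i\,\tilde P_i^{\ast}(\frac{\widetilde\nabla\ell}{\ell})=\tr_{\Div m/S}(\frac{\widetilde\nabla\ell}{\ell})$, the trace connection. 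Finally, since the Gauss--Manin variation of the harmonic representative $\tilde\nu$ of the class of $(\Lcal,\nabla^{\Lcal}_{\Xcal/S})$ is represented by $-F_{\widetilde\nabla}$, the leftover term $T_\bullet(\nabla_{\GM}\tilde\nu)=-\frac{i}{2\pi}\pi_{\ast}(\frac{\nabla^{\Mcal}m}{m}\wedge\nabla_{\GM}\tilde\nu)$ becomes $+\frac{i}{2\pi}\pi_{\ast}(\frac{\nabla^{\Mcal}m}{m}\wedge F_{\widetilde\nabla})$, the second piece of $\nabla^{int}$ (here $\nabla^{\Mcal}$ plays the role of $\widetilde\nabla'$). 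Adding the three contributions yields the asserted identity.

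I expect the main obstacle to be the content of the last paragraph: making rigorous the differentiation of the family of path $1$-chains (the exact form of the Cartan--Lie boundary and Gauss--Manin terms, with the $\tilde\sigma$-endpoint killed by the rigidification), together with the two identifications with the canonical extension, namely $P_i^{\ast}A=\int_{\tilde\sigma}^{\tilde P_i}\nabla_{\GM}\tilde\nu$ and $\nabla_{\GM}\tilde\nu\leftrightarrow -F_{\widetilde\nabla}$, which are exactly the points where the reciprocity law (WR) and the results of \cite{FreixasWentworth:15} must be invoked. The careful bookkeeping of the Poincar\'e--Lelong residue and its cancellation against the boundary term, including the numerical coincidence $-\frac{i}{2\pi}\cdot 2\pi i=1$, is the other delicate ingredient.
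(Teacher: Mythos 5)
Your proposal is correct and follows essentially the same route as the paper's proof: differentiate the three summands of \eqref{eq:7} via the current-differentiation rule \eqref{eq:current_diff_2}, use Poincar\'e--Lelong to produce the curvature term $-\frac{i}{2\pi}\pi_{\ast}(F_{\nabla^{\Mcal}}\wedge\tilde{\nu})$ and a residue that cancels the moving-endpoint boundary term, identify $\nabla_{\GM}\nu_{\Lcal}$ with $-F_{\nabla^{\Lcal}}$, and assemble the remainder into the trace part of the intersection connection via the defining property of the canonical extension (the paper's equations \eqref{eq:2}--\eqref{eq:6}). The only difference is expository: you make explicit the horizontal correction $A$ and the cancellation of the two $\tr_{\Div m/S}(\tilde{\nu})$ contributions, which the paper leaves implicit in summing its displayed identities.
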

\begin{proof}
We collect the following identities. First, since we suppose that $\widetilde{\nu}$ is rigidified, \emph{i.e.} vanishes, along $\sigma$, the differentiation law \eqref{eq:current_diff_2} gives
\begin{equation}\label{eq:2}
	d\int_{\tilde{\sigma}}^{\widetilde{\Div m}}\tilde{\nu}=\tr_{\Div m/S^{\circ}}(\tilde{\nu})+\int_{\tilde{\sigma}}^{\widetilde{\Div m}}\nabla_{\GM}\nu_{\Lcal}.
\end{equation}
This still holds even for nonrigidified $\tilde{\nu}$. Similarly, we have
\begin{equation}\label{eq:3}
	\frac{i}{2\pi}d\pi_{\ast}\left(\frac{\nabla m}{m}\wedge\tilde{\nu}\right)=\frac{i}{2\pi}\pi_{\ast}(F_{\nabla^{\Mcal}}\wedge\tilde{\nu})
	-\tr_{\Div m/S^{\circ}}(\tilde{\nu})+\frac{i}{2\pi}\pi_{\ast}(\frac{\nabla m}{m}\wedge\nabla_{\GM}\nu_{\Lcal}),
\end{equation}
where we used the Poincar\'e-Lelong equation of currents
\begin{displaymath}
	\frac{i}{2\pi}d\left[\frac{\nabla m}{m}\right]+\delta_{\Div m}=\frac{i}{2\pi}F_{\nabla^{\Mcal}}.
\end{displaymath}
Now we observe that, in terms of the curvature of the canonical extension $\nabla^{\Lcal}$ of $\nabla_{\Xcal/S}^{\Lcal}$, we have
\begin{equation}\label{eq:4}
	\frac{i}{2\pi}\pi_{\ast}(\frac{\nabla m}{m}\wedge\nabla_{\GM}\nu_{\Lcal})
	=-\frac{i}{2\pi}\pi_{\ast}\left(\frac{\nabla m}{m}\wedge F_{\nabla^{\Lcal}}\right).
\end{equation}
We recall the definition of the intersection connection:
\begin{equation}\label{eq:5}
	\frac{\nabla_{\langle\Lcal,\Mcal\rangle}^{int}\langle\ell,m\rangle}{\langle\ell,m\rangle}=
	\frac{i}{2\pi}\pi_{\ast}\left(\frac{\nabla m}{m}\wedge F_{\nabla^{\Lcal}}\right)
	+\tr_{\Div m/S^{\circ}}\left(\frac{\nabla^{\Lcal}\ell}{\ell}\right)
\end{equation}
and that by the very definition of the canonical extension $\nabla^{\Lcal}$
\begin{equation}\label{eq:6}
	\tr_{\Div m/S^{\circ}}\left(\frac{\nabla^{\Lcal}\ell}{\ell}\right)=\tr_{\widetilde{\Div m}/S^{\circ}}\left(\frac{d\tilde{\ell}}{\tilde{\ell}}\right)
	-\int_{\tilde{\sigma}}^{\widetilde{\Div m}}\nabla_{\GM}\nu_{\Lcal}.
\end{equation}
Putting together equations \eqref{eq:2}--\eqref{eq:4}, and taking into account the defining equations \eqref{eq:5}--\eqref{eq:6}, we conclude with the assertion of the theorem.

\end{proof}
\begin{remark}
Recall that the curvature of the intersection connection above in terms of the curvatures of $\nabla^{\Lcal}$ (the canonical extension of $\nabla_{\Xcal/S}^{\Lcal}$) and $\nabla^{\Mcal}$ is given by
\begin{displaymath}
	F_{\langle\Lcal,\Mcal\rangle^{int}}=\pi_{\ast}(F_{\nabla^{\Lcal}}\wedge F_{\nabla^{\Mcal}}).
\end{displaymath}
This was proven in \cite[Prop. 3.16]{FreixasWentworth:15}. This formula is consistent with:
\begin{displaymath}
	\begin{split}
		d\frac{\nabla_{\langle\Lcal,\Mcal\rangle}^{int}\langle\ell,m\rangle}{\langle\ell,m\rangle}&=
	d^{2}\LOG_{na}\langle\ell,m\rangle
	+\frac{i}{2\pi}d\pi_{\ast}(F_{\nabla^{\Mcal}}\wedge\tilde{\nu})\\
	&=0-\frac{i}{2\pi}\pi_{\ast}\left(F_{\nabla^{\Mcal}}\wedge\nabla_{\GM}\nu_{\Lcal}\right)\\
	&=\frac{i}{2\pi}\pi_{\ast}\left(F_{\nabla^{\Lcal}}\wedge F_{\nabla^{\Mcal}}\right).
\end{split}
\end{displaymath}	
Observe the sign in the second line, according to \eqref{eq:current_diff_2}.
\end{remark}

\subsection{Dependence of naive logarithms on liftings}
Continuing with the notation of Section \ref{section:LOG}, we now study the dependence of the construction of $\LOG_{na}$ for Deligne pairings $\langle\Lcal,\Mcal\rangle$ on the lifting $\tilde{\nu}$ of $\nu_{\Lcal}$. 
Let $\theta$ be a flat section of $R^{1}\pi_{\ast}(2\pi i\, \mathbb{Z})$ on the contractible open subset $S^{\circ}$ of $S$. We think of $\theta$ as a smooth family of cohomology classes with periods in $2\pi i\, \mathbb{Z}$. As usual, on fibers we will confuse the notation $\theta$ with its harmonic representative. We wish to study the change of $\LOG_{na}$ under the transformation $\tilde{\nu}\mapsto\tilde{\nu}+\theta$. A first remark is that given a meromorphic section $\ell$ of $\Lcal$, the lifting $\tilde{\ell}$ does not depend on the choice of $\theta$. Therefore, we are led to study the change of the expression
\begin{displaymath}
	\int_{\tilde{\sigma}}^{\widetilde{\Div m}}\tilde{\nu}+\frac{i}{2\pi}\pi_{\ast}\left(\frac{\nabla m}{m}\wedge\tilde{\nu}\right);
\end{displaymath}
that is, the factor
\begin{equation}\label{eq:100}
	\int_{\tilde{\sigma}}^{\widetilde{\Div m}}\theta+\frac{i}{2\pi}\pi_{\ast}\left(\frac{\nabla m}{m}\wedge\theta\right).
\end{equation}
Observe that a change of representatives in $\tilde{\sigma}$ or $\widetilde{\Div m}$ does not affect this factor modulo $2\pi i\, \ZBbb$, because $\theta$ has periods in $2\pi i\, \ZBbb$. Since the expression is a function on $S^{\circ}$, we can reduce to the case when the base $S$ is a point, and thus work over a single Riemann surface $X$. 

There is no general answer for the question  posed above unless we make some additional assumptions on $\nabla^{\Mcal}$. The first case to consider is when $\nabla^{\Mcal}$ is the Chern connection of a smooth hermitian metric on $\Mcal$. Then, we already saw during the proof of Lemma \ref{lemma:change-nu-theta} that
\begin{displaymath}
	\int_{X}\frac{\nabla m}{m}\wedge\tilde{\nu}=\int_{X}d(\log\|m\|^{2}\tilde{\nu})=0.
\end{displaymath}
We are then reduced to
\begin{displaymath}
	\int_{\tilde{\sigma}}^{\widetilde{\Div m}}\theta.
\end{displaymath}
This quantity does not vanish in general. In this case, the lack of invariance under the tranformation $\tilde{\nu}\mapsto\tilde{\nu}+\theta$ will be addressed later in Section \ref{subsection:conjugate-families} by introducing the conjugate datum. 

The second relevant case is when $\nabla^{\Mcal}$ is holomorphic. Let $\vartheta$ be a harmonic differential form whose class in $M_{dR}(X)=H^{1}(X,\CBbb)/H^{1}(X,2\pi i\, \ZBbb)$ corresponds to the connection $\nabla^{\Mcal}$. Then, the associated Chern connection corresponds to $\vartheta''-\overline{\vartheta}''$, and we have the comparison
\begin{equation}\label{eq:11}
	\nabla^{\Mcal}=\nabla^{\Mcal}_{ch}+\vartheta'+\overline{\vartheta}''.
\end{equation}
 Also, because $\theta$ has purely imaginary periods, we have a decomposition
\begin{equation}\label{eq:12}
	\theta=\theta''-\overline{\theta}''.
\end{equation}
These relations will be used in the proof of the following statement.
\begin{proposition}[{\sc Refined Poincar\'e-Lelong equation}]\label{prop:refined-PL}
Assume $\nabla^{\Mcal}$ is holomorphic and choose a harmonic one form $\vartheta$ representing the class of $\nabla^{\Mcal}$ in $M_{dR}(X)$. Let $\theta$ be a harmonic one form with periods in $2\pi i\, \ZBbb$. Then
\begin{equation}\label{eq:refining-PL}
	\int_{\tilde{\sigma}}^{\widetilde{\Div m}}\theta+\frac{i}{2\pi}\int_{X}\frac{\nabla m}{m}\wedge\theta=
	\frac{i}{2\pi }\int_{X}\vartheta\wedge\theta \quad\mod 2\pi i\, \mathbb{Z}.
\end{equation}
\end{proposition}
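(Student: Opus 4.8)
The plan is to rewrite both sides in terms of the single closed $1$-form $\beta:=\vartheta-\frac{\nabla m}{m}$ on $X$ (here and below $\nabla=\nabla^{\Mcal}$) and then to evaluate $\int_X\beta\wedge\theta$ by a Stokes/residue computation on a cut fundamental domain. First I record the structure of the ingredients. Since $\nabla^{\Mcal}$ is holomorphic and $X$ is a curve, its curvature (a priori of type $(2,0)$) vanishes, so $\nabla^{\Mcal}$ is flat and $\frac{\nabla m}{m}$ is a \emph{closed} meromorphic $1$-form with simple poles along $\Div m$ whose residue at each point is the order of $m$ there. In particular $\frac{\nabla m}{m}\wedge\theta$ is integrable on $X$ (simple poles against a smooth form), so all integrals below converge; moreover, using the type decomposition \eqref{eq:12} one has $\frac{\nabla m}{m}\wedge\theta=\frac{\nabla m}{m}\wedge\theta''$, consistent with the reduction already exploited in Lemma \ref{lemma:change-nu-theta}.

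The key observation is that \emph{every} period of $\beta$ lies in $2\pi i\,\ZBbb$. Around a small loop encircling a point of $\Div m$ the period of $\beta$ is $-2\pi i$ times the order, hence in $2\pi i\,\ZBbb$; over a class $\gamma\in H_1(X)$, the periods of $\frac{\nabla m}{m}$ compute the logarithm of the monodromy of $\nabla^{\Mcal}$ modulo $2\pi i\,\ZBbb$ (the residues accounting for the ambiguity of pushing $\gamma$ off the poles), and by the very definition of $\vartheta$ as the harmonic representative of the class of $\nabla^{\Mcal}$ in $M_{dR}(X)=H^1(X,\CBbb)/H^1(X,2\pi i\,\ZBbb)$ these coincide with the periods of $\vartheta$ modulo $2\pi i\,\ZBbb$. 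Consequently $\oint_\gamma\beta\in 2\pi i\,\ZBbb$ for every cycle $\gamma$ in $X\setminus\Div m$.

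Then comes the Stokes computation. I write $\psi(z)=\int_{\tilde\sigma}^z\theta$ for the multivalued primitive of $\theta$ on $X\setminus\Div m$; its monodromy lies in $2\pi i\,\ZBbb$ since $\theta$ has periods there. I cut $X$ along a system of generators of $\pi_1$ into a polygon and excise small disks around the points of $\Div m$; on the resulting domain $\Pi$ both $\psi$ and $\beta$ are single valued, $\beta$ is smooth and closed, and hence $\beta\wedge\theta=-d(\psi\beta)$. Applying Stokes and letting the radii tend to $0$, the excised circles contribute the residues, producing $-2\pi i\int_{\tilde\sigma}^{\widetilde{\Div m}}\theta$, while the identified polygon edges contribute a Riemann bilinear sum $\sum_k\big(\oint_{a_k}\theta\,\oint_{b_k}\beta-\oint_{b_k}\theta\,\oint_{a_k}\beta\big)$. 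By the preceding paragraph and the hypothesis on $\theta$, each summand is a product of two elements of $2\pi i\,\ZBbb$, so this remainder lies in $(2\pi i)^2\ZBbb$. Collecting terms, $\int_X\beta\wedge\theta=-2\pi i\int_{\tilde\sigma}^{\widetilde{\Div m}}\theta+E$ with $E\in(2\pi i)^2\ZBbb$.

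Finally I multiply by $\frac{i}{2\pi}$. Since $\frac{i}{2\pi}(-2\pi i)=1$ and $\frac{i}{2\pi}(2\pi i)^2\ZBbb\subset 2\pi i\,\ZBbb$, this gives $\frac{i}{2\pi}\int_X\beta\wedge\theta\equiv\int_{\tilde\sigma}^{\widetilde{\Div m}}\theta\pmod{2\pi i\,\ZBbb}$, and substituting $\beta=\vartheta-\frac{\nabla m}{m}$ and rearranging yields \eqref{eq:refining-PL}. The main obstacle is the bookkeeping in the Stokes step: one must fix a branch of $\psi$ and orient the excised circles consistently so that the residue term comes out with the precise constant $-2\pi i$ (a wrong sign here cannot be absorbed modulo $2\pi i\,\ZBbb$), and one must verify that the edge contributions genuinely organize into products of periods of $\theta$ and $\beta$, both lying in $2\pi i\,\ZBbb$, so that after scaling by $i/2\pi$ they disappear modulo $2\pi i\,\ZBbb$. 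The comparison \eqref{eq:11} is not needed directly; it enters only implicitly, through the identification of the periods of $\vartheta$ with the monodromy of $\nabla^{\Mcal}$.
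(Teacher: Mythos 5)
Your proof is correct, and it reaches the identity by a genuinely different organization than the paper's. The paper proves the proposition as a ``conjunction of reciprocity laws'': it splits $\theta=\theta''-\overline{\theta}''$ and $\nabla^{\Mcal}=\nabla^{\Mcal}_{ch}+\vartheta'+\overline{\vartheta}''$ (its equations \eqref{eq:11}--\eqref{eq:12}), applies the classical reciprocity formula to $\theta'$ paired against the Chern form $\partial\log\|m\|^{2}$ of an auxiliary flat metric, conjugates to get the $\theta''$ statement, recombines, and only at the end replaces the periods of $\partial\log\|m\|^{2}$ by those of $\vartheta''-\overline{\vartheta}''$ modulo $2\pi i\,\ZBbb$; the two integrals on the left of \eqref{eq:refining-PL} are treated separately throughout. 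You instead bundle everything into the single closed $1$-form $\beta=\vartheta-\nabla m/m$, observe that \emph{all} of its periods on $X\setminus\Div m$ (residues around the poles as well as periods over $H_1$) lie in $2\pi i\,\ZBbb$ --- the latter because $\int_\gamma \nabla m/m$ computes $\log$ of the holonomy of $\nabla^{\Mcal}$, which is what $\vartheta$ represents by definition of $M_{dR}(X)$ --- and then run one Stokes/residue computation on the cut polygon with excised disks. Both arguments ultimately rest on the Riemann bilinear relations on a fundamental domain, but yours dispenses with the type decomposition and the auxiliary flat metric entirely, and it makes transparent why the error terms are killed: they are $\ZBbb$-bilinear in periods lying in $2\pi i\,\ZBbb$, hence land in $(2\pi i)^2\ZBbb$ and vanish after the $i/2\pi$ normalization. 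Your sign bookkeeping checks out (the negatively oriented excised circles give $+2\pi i\sum_i\psi(P_i)\operatorname{Res}_{P_i}\beta=-2\pi i\sum_i n_i\psi(P_i)$ after passing from $\theta\wedge\beta$ to $\beta\wedge\theta$), and the only point worth stating explicitly is that $\sum_i n_i\psi(P_i)$, computed with the branch of $\psi$ on the cut polygon, agrees with the paper's $\int_{\tilde\sigma}^{\widetilde{\Div m}}\theta$ modulo $2\pi i\,\ZBbb$, since the two differ by an integer combination of periods of $\theta$.
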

\begin{remark}
Before giving the proof, let us observe that this relation is a refinement of the Poincar\'e-Lelong equation applied to $\theta$. Indeed, in a family situation, we may differentiate \eqref{eq:refining-PL} following the differentiation rules for currents \eqref{eq:current_diff_2} (the indeterminacy of $2\pi i\, \mathbb{Z}$ is locally constant and thus killed by differentiation). In a family situation $\theta$ is necessarily flat for the Gauss-Manin connection: $\nabla_{\GM}\theta=0$. We obtain
\begin{displaymath}
	\tr_{\Div m}(\theta)+\pi_{\ast}\left(d\left[\frac{\nabla m}{m}\right]\wedge\theta\right)=\frac{i}{2\pi}\pi_{\ast}(F_{\nabla^{\Mcal}}\wedge\theta).
\end{displaymath}
This is a relative version of the Poincar\'e-Lelong equation applied to $\theta$!
\end{remark}
\begin{proof}[Proof of Proposition \ref{prop:refined-PL}]
The statement is the conjunction of various reciprocity laws. They involve the boundary of a fundamental domain delimited by (liftings of) simple curves $\alpha_{i}$ and $\beta_i$ whose homology classes provide with a symplectic basis of $H_{1}(X,\mathbb{Z})$, with 
 intersection matrix 
$$\left(\begin{array}{cc}
	0	&+1_{g}\\
	-1_{g}	&0
\end{array}\right).$$ We can take $\widetilde{\Div m}$ in the chosen fundamental domain based at $\tilde{\sigma}$, because we already justified \eqref{eq:100} does not depend on representatives. Applying the reciprocity formula to $\theta'$ and the meromorphic differential form $\nabla^{\Mcal}_{ch}m/m=\partial\log\|m\|^{2}$ (where $\|\cdot\|$ stands for a flat metric on $\Mcal$), we have
\begin{equation}\label{eq:13}
	\int_{\tilde{\sigma}}^{\widetilde{\Div m}}\theta'=\frac{1}{2\pi i\, }\sum_{j}\int_{\alpha_j}\theta'\int_{\beta_j}\partial\log\|m\|^{2}
	-\int_{\beta_j}\theta'\int_{\alpha_j}\partial\log\|m\|^{2}.
\end{equation}
Now we take into account that $\theta'=-\overline{\theta}''$, and conjugate the previous expression to obtain
\begin{displaymath}
	-\int_{\tilde{\sigma}}^{\Div\tilde{m}}\theta''=\frac{1}{2\pi i\, }\sum_{j}\int_{\alpha_j}\theta''\int_{\beta_j}\overline{\partial}\log\|m\|^{2}
	-\int_{\beta_j}\theta''\int_{\alpha_j}\overline{\partial}\log\|m\|^{2}.
\end{displaymath}
But observe that for a closed curve $\gamma$ disjoint from the divisor of $m$, we have by Stokes' theorem,
\begin{displaymath}
	\int_{\gamma}d\log\|m\|^{2}=0,
\end{displaymath}
and therefore
\begin{displaymath}
	\int_{\gamma}\partial\log\|m\|^{2}=-\int_{\gamma}\overline{\partial}\log\|m\|^{2}.
\end{displaymath}
We thus derive
\begin{equation}\label{eq:14}
	\int_{\tilde{\sigma}}^{\widetilde{\Div m}}\theta''=\frac{1}{2\pi i\, }\sum_{j}\int_{\alpha_j}\theta''\int_{\beta_j}\partial\log\|m\|^{2}
	-\int_{\beta_j}\theta''\int_{\alpha_j}\partial\log\|m\|^{2}.
\end{equation}
Equations \eqref{eq:13}--\eqref{eq:14} together lead to
\begin{equation}\label{eq:15}
	\int_{\tilde{\sigma}}^{\Div\tilde{m}}\theta=\frac{1}{2\pi i\, }\sum_{j}\int_{\alpha_j}\theta\int_{\beta_j}\partial\log\|m\|^{2}
	-\int_{\beta_j}\theta\int_{\alpha_j}\partial\log\|m\|^{2}.
\end{equation}
But now, modulo $2\pi i\, \mathbb{Z}$, we have
\begin{equation}\label{eq:16}
	\int_{\gamma}\partial\log\|m\|^{2}=\int_{\gamma}(\vartheta''-\overline{\vartheta}'').
\end{equation}
Because the periods of $\theta$ are in $2\pi i\, \mathbb{Z}$, eqs.\ \eqref{eq:15}--\eqref{eq:16} summarize to
\begin{displaymath}
	\int_{\tilde{\sigma}}^{\widetilde{\Div m}}\theta=\frac{1}{2\pi i\, }\sum_{j}\int_{\alpha_j}\theta\int_{\beta_j}(\vartheta''-\overline{\vartheta}'')-\int_{\beta_j}\theta\int_{\alpha_j}(\vartheta''-\overline{\vartheta}'')
\end{displaymath}
modulo $2\pi i\, \mathbb{Z}$. The last combination of periods can be expressed in terms of integration over the whole $X$, and we conclude:
\begin{equation}\label{eq:17}
	\int_{\tilde{\sigma}}^{\widetilde{\Div m}}\theta=\frac{1}{2\pi i\, }\int_{X}\theta\wedge(\vartheta''-\overline{\vartheta}'').
\end{equation}
Let's now treat the second integral:
\begin{displaymath}
	\int_{X}\frac{\nabla m}{m}\wedge\theta=\int_{X}\frac{\nabla_{ch}m}{m}\wedge \theta+\int_{X}(\vartheta'+\overline{\vartheta}'')\wedge\theta.
\end{displaymath}
The first integral on the right hand side vanishes:
\begin{displaymath}
	\int_{X}\frac{\nabla_{ch}m}{m}\wedge \theta=\int_{X}d(\log\|m\|^{2}\theta'')=0,
\end{displaymath}
where we use that $\theta''$ is closed and that the differential form $\log\|m\|^2 \theta''$ has no residues. Hence we arrive at
\begin{equation}\label{eq:18}
	\frac{i}{2\pi}\int_{X}\frac{\nabla m}{m}\wedge\theta
	=\frac{i}{2\pi}\int_{X}(\vartheta'+\overline{\vartheta}'')\wedge\theta
	=\frac{1}{2\pi i\, }\int_{X}\theta\wedge (\vartheta'+\overline{\vartheta}'').
\end{equation}
We sum \eqref{eq:17} and \eqref{eq:18} to obtain
\begin{displaymath}
	\int_{\sigma}^{\widetilde{\Div m}}\theta+\frac{i}{2\pi}\int_{X}\frac{\nabla m}{m}\wedge\theta=
	\frac{1}{2\pi i\, }\int_{X}\theta\wedge\vartheta
\end{displaymath}
modulo $2\pi i\, \mathbb{Z}$, as was to be shown.
\end{proof}
\begin{remark}
The integral
\begin{displaymath}
	\frac{i}{2\pi }\int_{X}\vartheta\wedge\theta\quad\mod 2\pi i\, \mathbb{Z}
\end{displaymath}
depends only on the class of $\vartheta$ modulo the lattice $H^{1}(X,2\pi i\, \mathbb{Z})$, or equivalently on the point $[\nabla^{\Mcal}]$ in $M_{dR}(X)$.
\end{remark}

\section{The Intersection Logarithm}\label{section:int-log}

\subsection{Intersection logarithms in conjugate families}\label{subsection:conjugate-families}
As previously, $X$ is a compact Riemann surface and $\sigma\in X$ a fixed base point. We regard $X$ as a smooth projective algebraic curve over $\CBbb$, and then we write $X\rightarrow\Spec\CBbb$ for the structure map. Let $\overline{X}$ be the conjugate Riemann surface to $X$. As a differentiable manifold $\overline{X}$ coincides with $X$, and in particular $\pi_{1}(X,\sigma)=\pi_{1}(\overline{X},\overline{\sigma})$. The almost complex structures  of $X$ and $\overline{X}$ are related by $\overline J=-J$, and hence the orientations are opposite to each other. We can also canonically realize $\overline{X}$ as the complex analytic manifold associated to the base change of $X\rightarrow\Spec\CBbb$ by the conjugation $\CBbb\rightarrow\CBbb$. 

Introduce $(\Lcal^{c}, \nabla^{\Lcal,c})$ the canonically rigidified (at $\overline{\sigma}$) holomorphic line bundle with connection attached to the differential form $-\tilde{\nu}$, regarded as a differential form on $\overline{X}$. Equivalently, if $\chi\colon\pi_{1}(X,\sigma)\rightarrow\CBbb^{\times}$ is the holonomy character of $\nabla^{\Lcal}$, then $(\Lcal^{c},\nabla^{\Lcal,c})$ is the flat holomorphic line bundle on $\overline{X}$ with holonomy character $\chi^{-1}$. We say that $(\Lcal,\nabla^{\Lcal})$ and $(\Lcal^{c},\nabla^{\Lcal,c})$ is a \emph{conjugate pair}. We emphasize that this terminology does not refer to the complex structure. As rank $1$ local systems, these bundles are mutually complex conjugate exactly when the character $\chi$ is unitary.

For the connection $\nabla^{\Mcal}$, from now on we focus on two cases:
\begin{enumerate}
	\item[\textbullet] {\bf $\nabla^{\Mcal}$ is a Chern connection (not necessarily flat).} In this case, $\Mcal^{c}$ denotes the complex conjugate line bundle to $\Mcal$ on $\overline{X}$. We let $\nabla^{\Mcal,c}$ be the conjugate of the connection $\nabla^{\Mcal}$.
	\item[\textbullet] {\bf $\nabla^{\Mcal}$ is flat.} Then we assume that $\Mcal$ is rigidified at $\sigma$. Then $(\Mcal^{c},\nabla^{\Mcal,c})$ is the flat holomorphic line bundle on $\overline{X}$, canonically rigidified at $\overline{\sigma}$, with inverse holonomy character to $(\Mcal,\nabla^{\Mcal})$.
\end{enumerate}
There is an intersection between these two situations: the flat unitary case. The conventions defining $(\Mcal^{c},\nabla^{\Mcal,c})$ are consistent. By these we mean both are mutually isomorphic: there is a unique isomorphism respecting the connections and rigidifications.
In either case, we write $\LOG^{c}_{na}$ for the corresponding naive logarithm for $\langle\Lcal^{c},\Mcal^{c}\rangle$. 
\begin{proposition}\label{prop:indep-repr}
The sum of logarithms $\LOG_{na}$ and $\LOG^{c}_{na}$, for $\langle\Lcal,\Mcal\rangle$ and $\langle\Lcal^{c},\Mcal^{c}\rangle$, defines a logarithm for
\begin{displaymath}
	\langle\Lcal,\Mcal\rangle\otimes_{\mathbb{C}}\langle\Lcal^{c},\Mcal^{c}\rangle,
\end{displaymath} 
that only depends on the point $[\nabla^{\Lcal}]$ in $M_{dR}(X)$, the rigidifications, and on $\nabla^{\Mcal}$. If $\nabla^{\Mcal}$ is flat, then the dependence on $\nabla^{\Mcal}$  factors through $M_{dR}(X)$ as well.
\end{proposition}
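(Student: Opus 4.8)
The plan is to reduce the whole statement to a single invariance. That the sum $\LOG_{na}+\LOG_{na}^{c}$ is a logarithm for the tensor product is formal, since a sum of logarithms on two line bundles is a logarithm on their tensor product. Over the point $S=\Spec\CBbb$ the data entering the construction of $\LOG_{na}$ are the rigidifications, the connection $\nabla^{\Mcal}$, and the class $\nu_{\Lcal}\in M_{dR}(X)$ together with a chosen lift $\tilde{\nu}$ to $H^{1}_{dR}(X)$ and a harmonic representative (whose vertical projection is canonical). The dependence on the auxiliary liftings $\tilde{\sigma}$ and $\widetilde{\Div m}$ to the universal cover was already shown to be trivial modulo $2\pi i\,\ZBbb$ in Section \ref{section:LOG}. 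Two lifts of $\nu_{\Lcal}$ differ exactly by a harmonic form $\theta$ with periods in $2\pi i\,\ZBbb$, that is, by a flat section of $R^{1}\pi_{\ast}(2\pi i\,\ZBbb)$. Hence the claim that the sum depends only on $[\nabla^{\Lcal}]\in M_{dR}(X)$, the rigidifications, and $\nabla^{\Mcal}$ is equivalent to the invariance of $\LOG_{na}+\LOG_{na}^{c}$ under $\tilde{\nu}\mapsto\tilde{\nu}+\theta$.

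Next I would compute the two variations. Since $\tilde{\ell}$ is unchanged, the variation of $\LOG_{na}$ is minus the factor \eqref{eq:100}; on the conjugate side the classifying form of $\Lcal^{c}$ is $-\tilde{\nu}$, so its lift varies by $-\theta$ and the variation of $\LOG_{na}^{c}$ is minus the corresponding factor on $\overline{X}$ evaluated at $-\theta$. In the Chern case the wedge terms $\frac{i}{2\pi}\pi_{\ast}(\frac{\nabla m}{m}\wedge\theta)$ vanish exactly as in Lemma \ref{lemma:change-nu-theta}, leaving the path integrals $\mp\int_{\tilde{\sigma}}^{\widetilde{\Div m}}\theta$. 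These cancel modulo $2\pi i\,\ZBbb$: $\overline{X}$ and $X$ coincide as real manifolds with the same base point, $\Div m^{c}=\Div m$, and $\theta$ is the same closed form, while a path integral is insensitive to the orientation of the surface, so the two path integrals agree and their opposite signs produce the cancellation.

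In the flat case the wedge terms no longer vanish, and here the Refined Poincar\'e-Lelong equation (Proposition \ref{prop:refined-PL}) is essential: it collapses the factor on $X$ to $\frac{i}{2\pi}\int_{X}\vartheta\wedge\theta$, where $\vartheta$ represents $[\nabla^{\Mcal}]$, and the factor on $\overline{X}$ to $\frac{i}{2\pi}\int_{\overline{X}}(-\vartheta)\wedge(-\theta)$, since $\nabla^{\Mcal,c}$ has inverse holonomy and is represented by $-\vartheta$ while the lift varies by $-\theta$. Because $\int_{\overline{X}}=-\int_{X}$ and both factors enter with a minus sign, the two variations are $-\frac{i}{2\pi}\int_{X}\vartheta\wedge\theta$ and $+\frac{i}{2\pi}\int_{X}\vartheta\wedge\theta$, which sum to zero modulo $2\pi i\,\ZBbb$. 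This proves the invariance under $\tilde{\nu}\mapsto\tilde{\nu}+\theta$, hence the first assertion. For the final sentence, when $\nabla^{\Mcal}$ is flat the preceding computation shows the $\nabla^{\Mcal}$-dependence enters only through $\frac{i}{2\pi}\int_{X}\vartheta\wedge\theta$, which by the remark following Proposition \ref{prop:refined-PL} depends only on the class of $\vartheta$ modulo $H^{1}(X,2\pi i\,\ZBbb)$, i.e. on $[\nabla^{\Mcal}]\in M_{dR}(X)$; equivalently one reruns the cancellation with the roles of $\Lcal$ and $\Mcal$ exchanged.

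The main obstacle I expect is the careful bookkeeping of signs across the passage from $X$ to $\overline{X}$. One must track simultaneously the orientation reversal, which flips the sign of every fiberwise integral of a $2$-form but not of a path integral; the negations $\tilde{\nu}\mapsto-\tilde{\nu}$ and $\vartheta\mapsto-\vartheta$ built into the definition of the conjugate pair; and the change of Hodge type of a fixed complex form under $\overline{J}=-J$. Making these three sign sources conspire into an exact cancellation modulo $2\pi i\,\ZBbb$, while keeping the Chern case (where the wedge term drops and the path integrals cancel directly) cleanly separated from the flat case (where Proposition \ref{prop:refined-PL} and the orientation reversal do the work), is the delicate point.
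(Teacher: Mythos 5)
Your proposal is correct and follows essentially the same route as the paper: reduce everything to invariance under $\tilde{\nu}\mapsto\tilde{\nu}+\theta$ for $\theta$ harmonic with periods in $2\pi i\,\ZBbb$, cancel the path integrals directly in the Chern case (where the wedge term vanishes as in Lemma \ref{lemma:change-nu-theta}), and invoke Proposition \ref{prop:refined-PL} together with the orientation reversal on $\overline{X}$ in the flat case. The paper additionally spells out, via Proposition \ref{prop:refined-PL} applied to the trivial bundle, why the cancellation is insensitive to replacing $m^{c}$ by an arbitrary meromorphic section of $\Mcal^{c}$, a point you implicitly absorb by noting that the variation is a constant shift of the logarithm.
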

\begin{proof}
Let $\theta$ be a harmonic 1-form with periods in $2\pi i\, \ZBbb$. We consider the change of $\LOG_{na}$ and $\LOG_{na}^{c}$ under the transformation $\tilde{\nu}\mapsto\tilde{\nu}+\theta$, and observe that they compensate each other.

We start with the Chern connection case on $\Mcal$. Let $m$ be a meromorphic section of $\Mcal$. It defines a complex conjugate meromorphic section $m^{c}$ of $\Mcal^{c}$. On $X=\overline{X}$, the divisors $\Div m$ and $\Div m^{c}$ are equal. We saw that the change in $\LOG_{na}(\langle\ell,m\rangle)$ under $\tilde{\nu}\mapsto\tilde{\nu}+\theta$ is reduced to
\begin{displaymath}
	\int_{\tilde{\sigma}}^{\widetilde{\Div m}}\theta.
\end{displaymath}
The change in $\LOG_{na}^{c}(\langle\ell', m^{c}\rangle)$ will be
\begin{displaymath}
	\int_{\tilde{\sigma}}^{\widetilde{\Div m^{c}}}(-\theta).
\end{displaymath}
But now, independently of the liftings $\widetilde{\Div m}$ and $\widetilde{\Div m^{c}}$ in $\widetilde{X}=\widetilde{\overline{X}}$, we have
\begin{displaymath}
	\int_{\tilde{\sigma}}^{\widetilde{\Div m}}\theta+\int_{\tilde{\sigma}}^{\widetilde{\Div m^{c}}}(-\theta)=0\quad\mod 2\pi i\, \ZBbb.
\end{displaymath}
More generally, we can change $m^{c}$ by a meromorphic function. For if $f$ is meromorphic on $\overline{X}$, we have
\begin{displaymath}
	\int_{\tilde{\sigma}}^{\widetilde{\Div f}}\theta=0,
\end{displaymath}
precisely by Proposition \ref{prop:refined-PL} applied to the trivial line bundle in place of $\Mcal$. Hence, $m^{c}$ may be taken to be any meromorphic section of $\Mcal^{c}$. All in all, we see that $\LOG_{na}+\LOG_{na}^{c}$ is invariant under $\tilde{\nu}\mapsto\tilde{\nu}+\theta$.

Now for the flat connection case on $\Mcal$. We introduce a harmonic representative $\vartheta$ of the class of $\nabla^{\Mcal}$ in $M_{dR}(X)$. Then $\nabla^{\Mcal,c}$ admits $-\vartheta$ as a harmonic representative in $M_{dR}(\overline{X})$. After Proposition \ref{prop:refined-PL}, for any meromorphic section $m$ of $\Mcal$ on $X$, we have
\begin{equation}\label{eq:101}
	\int_{\tilde{\sigma}}^{\widetilde{\Div m}}\theta+\frac{i}{2\pi}\int_{X}\frac{\nabla m}{m}\wedge\theta=
	\frac{1}{2\pi i\, }\int_{X}\theta\wedge\vartheta\quad\mod 2\pi i\, \ZBbb.
\end{equation}
And if $m^{c}$ is a meromorphic section of $\Mcal^{c}$ on $\overline{X}$, we analogously find
\begin{equation}\label{eq:102}
	\int_{\overline{\sigma}}^{\widetilde{\Div m^{c}}}(-\theta)+\frac{i}{2\pi}\int_{\overline{X}}\frac{\nabla m^{c}}{m^{c}}\wedge\theta=
	\frac{1}{2\pi i\, }\int_{\overline{X}}(-\theta)\wedge(-\vartheta)\quad\mod 2\pi i\, \ZBbb.
\end{equation}
We take into account that $\overline{X}$ has the opposite orientation to $X$, so that
\begin{displaymath}
	\frac{1}{2\pi i\, }\int_{\overline{X}}(-\theta)\wedge(-\vartheta)=-\frac{1}{2\pi i\, }\int_{X}\theta\wedge\vartheta.
\end{displaymath}
Hence, the change in the sum of logarithms is \eqref{eq:101}+\eqref{eq:102}=0. Notice from the formulas defining the logarithms, that the dependence on $\nabla^{\Mcal}$ trivially factors through $M_{dR}(X)$. The statement follows.
\end{proof}
\begin{remark}
From the definition of the naive logarithm, it is automatic that there is no need to rigidify $\Mcal$. However, $\Mcal^{c}$ is rigidified by construction.
\end{remark}

Let us examine the variation of $\LOG_{na}+\LOG^{c}_{na}$ in a family. Because the construction we did of logarithms is a pointwise one (they are functions), the proposition extends to the family situation. We consider $\pi\colon\Xcal\rightarrow S$, a section $\sigma$, and its conjugate family $\overline{\pi}\colon\overline{\Xcal}\rightarrow\overline{S}$ with conjugate section $\overline{\sigma}$. The conjugate family is obtained by changing the holomorphic structure on $\Xcal$ and $S$ to the opposite one. This induces the corresponding change of holomorphic structure and orientation on the fibers. Let $(\Lcal, \nabla^{\Lcal}_{\Xcal/S})$ and $(\Mcal,\nabla^{\Mcal}_{\Xcal/S})$ be line bundles with relative compatible connections on $\Xcal$. We suppose $\nabla_{\Xcal/S}^{\Lcal}$ is flat, and $\nabla_{\Xcal/S}^{\Mcal}$ is either flat or the Chern connection associated to a smooth hermitian metric on $\Mcal$. 

When both connections are flat, we have the smooth classifying sections $\nu_{\Lcal}$ and $\nu_{\Mcal}$ of $H^{1}_{dR}(\Xcal/S)/R^{1}\pi_{\ast}(2\pi i\, \mathbb{Z})$. We then assume that on $\overline{\Xcal}$ we have rigidified line bundles with relative flat connections $(\Lcal^{c}, \nabla^{\Lcal,c}_{\overline{\Xcal}/\overline{S}})$ and $(\Mcal^{c},\nabla^{\Mcal,c}_{\overline{\Xcal}/\overline{S}})$, corresponding to the smooth sections $-\nu_{\Lcal}$ and $-\nu_{\Mcal}$ of 
\begin{displaymath}
	H^{1}_{dR}(\overline{\Xcal}/\overline{S})/R^{1}\overline{\pi}_{\ast}(2\pi i\, \mathbb{Z})=H^{1}_{dR}(\Xcal/S)/R^{1}\pi_{\ast}(2\pi i\, \mathbb{Z})
\end{displaymath}
(as differentiable manifolds). The existence is not always guaranteed, but below we deal with relevant situations when it is. The local construction of Section \ref{section:LOG} produces local naive logarithms $\LOG_{na}$ and $\LOG^{c}_{na}$, by taking local liftings $\tilde{\nu}$ and $-\tilde{\nu}$ for $\nu_{\Lcal}$ and $-\nu_{\Lcal}$, and using the canonical extensions of $\nabla^{\Mcal}_{\Xcal/S}$ and $\nabla^{\Mcal,c}_{\Xcal/S}$. Proposition \ref{prop:indep-repr} ensures that the \emph{a priori} locally defined combination $\LOG_{an}+\LOG^{c}_{an}$ on the smooth line bundle
\begin{displaymath}
	\langle\Lcal,\Mcal\rangle\otimes_{\Ccal^{\infty}_{S}}\langle\Lcal^{c},\Mcal^{c}\rangle,
\end{displaymath}
actually globalizes to a well defined logarithm, that we call \emph{intersection logarithm}: $$\LOG_{int}:=\LOG_{na}+\LOG^{c}_{na}.$$ 

When $\nabla_{\Xcal/S}^{\Mcal}$ is the relative Chern connection attached to a smooth hermitian metric on $\Mcal$, we take $\Mcal^{c}$ to be the conjugate line bundle $\overline{\Mcal}$ on $\overline{\Xcal}$, with its conjugate Chern connection $\nabla_{\Xcal/S}^{\Mcal,c}$. For $\Lcal$, as above we assume the existence of a rigidified $(\Lcal^{c},\nabla^{\Lcal,c})$, with classifying map $-\nu_{\Lcal}$. Again, by Proposition \ref{prop:indep-repr} the locally defined $\LOG_{an}+\LOG_{an}^{c}$ extends to a global logarithm that we also denote $\LOG_{int}$.

We summarize the main features of $\LOG_{int}$.
\begin{proposition}\label{prop:non-rig-LOG}
$\hbox{}$
\begin{enumerate}
\item When all connections are flat, the construction of $\LOG_{int}$ does not depend on the section $\sigma$ and rigidifications.
\item In general, the smooth connection attached to $\LOG_{int}$ is the tensor product of intersection connections.
\end{enumerate}
\end{proposition}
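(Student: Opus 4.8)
The plan is to establish Proposition \ref{prop:non-rig-LOG} in two parts, handling the section-independence of the flat case and the identification of the associated connection separately, each time reducing to computations already available in the excerpt.

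\medskip
\noindent\textbf{Part (1): independence of section and rigidifications.} First I would recall that by construction $\LOG_{int}=\LOG_{na}+\LOG^c_{na}$, and that Proposition \ref{prop:indep-repr} already guarantees that this sum is independent of the choice of lifting $\tilde\nu$, depending only on the points $[\nabla^{\Lcal}]$ and $[\nabla^{\Mcal}]$ in $M_{dR}(X)$ (in the fully flat case). What remains is to show independence of the section $\sigma$ and the rigidifications when all connections are flat. The strategy is to compare the logarithms attached to two choices of section $\sigma_1$ and $\sigma_2$ (or two rigidifications along a fixed $\sigma$). Changing the rigidification of $\Lcal$ along $\sigma$ multiplies the canonical lift $\tilde\ell$ by a fixed nonzero constant $\lambda$ on each fiber, which shifts $\log(\tilde\ell(\widetilde{\Div m}))$ by $(\deg\Mcal)\log\lambda$; the conjugate construction on $\overline X$, whose rigidification is forced to be the inverse, contributes the compensating $-(\deg\Mcal)\log\lambda$, so the sum is unchanged modulo $2\pi i\,\ZBbb$. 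For the section itself, moving $\sigma$ to a new point amounts to changing the base point of the integration paths $\int_{\tilde\sigma}^{\widetilde{\Div m}}\tilde\nu$ and correspondingly rebasing the lift $\tilde\ell$; the key point is that the path from the old to the new section cancels between $\log\tilde\ell$ and $\int\tilde\nu$ by the transformation law \eqref{eq:tr-law}, exactly as in the representative-independence computation already displayed after \eqref{eq:7}. Running this cancellation simultaneously on $X$ and $\overline X$, with opposite orientations and inverse holonomies, yields the asserted independence.

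\medskip
\noindent\textbf{Part (2): the associated connection is the tensor product of intersection connections.} The approach here is to differentiate $\LOG_{int}$ and invoke Proposition \ref{prop:LOG-int}. Applied to $(\Lcal,\Mcal)$ with a global compatible extension $\nabla^{\Mcal}$, that proposition gives
\begin{equation*}
	d\LOG_{na}\langle\ell,m\rangle=\frac{\nabla^{int}_{\langle\Lcal,\Mcal\rangle}\langle\ell,m\rangle}{\langle\ell,m\rangle}
	-\frac{i}{2\pi}\pi_{\ast}(F_{\nabla^{\Mcal}}\wedge\tilde\nu),
\end{equation*}
and the analogous formula on $\overline\Xcal$ gives $d\LOG^c_{na}$ with the intersection connection of $\langle\Lcal^c,\Mcal^c\rangle$ minus $\tfrac{i}{2\pi}\pi_\ast(F_{\nabla^{\Mcal,c}}\wedge(-\tilde\nu))$. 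Since $\nabla_{\LOG_{int}}$ is defined by $d\LOG_{int}=d\LOG_{na}+d\LOG^c_{na}$, adding the two formulas produces the sum of the two intersection connections plus the error term
\begin{equation*}
	-\frac{i}{2\pi}\pi_{\ast}(F_{\nabla^{\Mcal}}\wedge\tilde\nu)
	+\frac{i}{2\pi}\overline\pi_{\ast}(F_{\nabla^{\Mcal,c}}\wedge\tilde\nu).
\end{equation*}
The task is then to show this error vanishes. When $\nabla^{\Mcal}$ is a Chern connection, $F_{\nabla^{\Mcal,c}}$ is the conjugate curvature on $\overline X$; because $\overline X$ carries the opposite orientation, the pushforward $\overline\pi_\ast$ introduces a sign that makes the two terms cancel after identifying $F_{\nabla^{\Mcal,c}}=\overline{F_{\nabla^{\Mcal}}}$ and using that both curvatures are of type $(1,1)$ paired against the same $(1,0)$-plus-$(0,1)$ representative $\tilde\nu$. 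When $\nabla^{\Mcal}$ is flat, both curvatures vanish outright and the error term is identically zero.

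\medskip
\noindent\textbf{Main obstacle.} The step I expect to require the most care is the orientation bookkeeping in Part (2): matching $\overline\pi_\ast$ against $\pi_\ast$ under the identification $\overline X=X$ as differentiable manifolds but with reversed orientation, and verifying that the conjugate curvature $F_{\nabla^{\Mcal,c}}$ combines with this sign to cancel the $\Xcal$-contribution exactly, rather than doubling it. This is precisely the mechanism that, by design, promotes the naive logarithm's near-dependence on only the holomorphic data (Remark \ref{remark:dependence-holom}) into a genuine, globally well-defined object. I would pin down the sign conventions by testing against the Poincaré--Lelong computation already carried out in the proof of Lemma \ref{lemma:change-nu-theta}, where the analogous $\pi_\ast(\tfrac{\nabla m}{m}\wedge\tilde\nu)$ term was shown to vanish for Chern connections, and then transport that argument to the curvature pairing.
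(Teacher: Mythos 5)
Your part (2) follows the paper's strategy---apply Proposition \ref{prop:LOG-int} to each of the two families and show that the two curvature error terms cancel---and your treatment of the Chern-connection case, including the orientation bookkeeping, is correct. But your flat case is wrong: when $\nabla^{\Mcal}_{\Xcal/S}$ is a flat \emph{relative} connection, the global extension $\nabla^{\Mcal}$ is not flat, so $F_{\nabla^{\Mcal}}$ does not ``vanish outright''. If it did, the intersection connection would be flat; the paper records $F_{\langle\Lcal,\Mcal\rangle^{int}}=\pi_{\ast}(F_{\nabla^{\Lcal}}\wedge F_{\nabla^{\Mcal}})$, and equation \eqref{eq:4} identifies $F_{\nabla^{\Lcal}}$ with $-\nabla_{\GM}\nu_{\Lcal}$ under fiber integration, which is nonzero for a genuinely varying family. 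The cancellation in the flat case is obtained by the same mechanism as in your Chern computation: $\Mcal^{c}$ has classifying map $-\nu_{\Mcal}$, hence $F_{\nabla^{\Mcal,c}}=-F_{\nabla^{\Mcal}}$, and the orientation reversal $\overline{\pi}_{\ast}=-\pi_{\ast}$ then kills the sum of the two error terms.

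Part (1) also rests on cancellations that do not occur as described. Changing the rigidification of $\Lcal$ rescales $\tilde{\ell}$ by a constant $\lambda$, but nothing forces the independently (canonically) rigidified conjugate bundle $\Lcal^{c}$ to contribute a compensating $-(\deg\Mcal)\log\lambda$; and for a change of section, the transformation law \eqref{eq:tr-law} only governs deck transformations $z\mapsto\gamma z$, so it cannot produce a cancellation between $\log\tilde{\ell}$ and $\int\tilde{\nu}$ when $\sigma$ moves to a genuinely different point of the fiber (in general $\log\tilde{\ell}(\tilde{\sigma}')\neq\int_{\tilde{\sigma}}^{\tilde{\sigma}'}\tilde{\nu}$, since the log-derivative of $\tilde{\ell}$ is $\nabla\ell/\ell$, not $\tilde{\nu}$). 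The correct reason---the one the paper uses---is simpler and needs no interplay between $X$ and $\overline{X}$: because all connections are flat, $\Mcal$ has relative degree $0$, so $\widetilde{\Div m}$ is a degree-zero divisor and every constant introduced by a change of $\sigma$ or of rigidification enters multiplied by $\deg\Div m=0$; each term of each naive logarithm is separately invariant.
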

\begin{proof}
We begin with the case when both connections are flat. The first item can be checked pointwise. Let us examine the terms in the definition of $\LOG_{na}$, $\LOG^{c}_{na}$ and $\LOG_{int}$. Suppose we fix another base point $\sigma'$ (and lifting $\tilde{\sigma}'$) and another rigidification. Let $\tilde{\ell}$ and $\tilde{\ell}'$ be equivariant meromorphic functions with character $\chi$, lifting the same meromorphic section of $\Lcal$. Then, for some $\lambda\in\CBbb^{\times}$, we have $\tilde{\ell}'=\lambda\tilde{\ell}$. Therefore, evaluating multiplicatively over a degree 0 divisor $D$ (say in a fundamental domain), we see
that
$\tilde{\ell}'(D)=\tilde{\ell}(D)$. The same happens for $\Lcal^{c}$. Also, in $\LOG_{na}$ we have the change
\begin{displaymath}
	\int_{\tilde{\sigma}}^{z}\tilde{\nu}=\int_{\tilde{\sigma}}^{\tilde{\sigma}'}\tilde{\nu}+\int_{\sigma'}^{z}\tilde{\nu}.
\end{displaymath}
The evaluation at a divisor is defined to be additive. Therefore, for a divisor $D$ we find
\begin{displaymath}
	\int_{\tilde{\sigma}}^{D}\tilde{\nu}=(\deg D)\int_{\tilde{\sigma}}^{\tilde{\sigma}'}\tilde{\nu}+\int_{\tilde{\sigma}'}^{D}\tilde{\nu}.
\end{displaymath}
We are concerned with the case $D=\widetilde{\Div m}$, when $\deg D=0$. This shows the independence of this term of the base point. The same argument applies to $\Lcal^{c}$. Finally, there is nothing to say about the remaining terms in the definition of $\LOG_{na}$ and $\LOG^{c}_{na}$, since they only depend on the vertical connections $\nabla^{\Mcal}$ and $\nabla^{\Mcal,c}$ (as we see pointwise) and $\tilde{\nu}$, and hence do not depend on base points nor rigidifications. The dependence on the choice of $\tilde{\nu}$ modulo $R^{1}\pi_{\ast}(2\pi i\, \ZBbb)$ was already addressed (Proposition \ref{prop:indep-repr}). We conclude that $\LOG_{int}$ does not depend on $\sigma$ and the rigidifications.

For the second item, it is enough to observe that
\begin{displaymath}
	\overline{\pi}_{\ast}(F_{\nabla^{\Mcal,c}}\wedge\tilde{\nu})=-\pi_{\ast}((-F_{\nabla^{\Mcal}})\wedge(-\tilde{\nu}))
\end{displaymath}
(opposite orientation on fibers) and apply Proposition \ref{prop:LOG-int}. We obtain
\begin{displaymath}
	d\LOG_{int}(\langle\ell,m\rangle\otimes\langle\ell',m'\rangle)=\frac{\nabla_{\langle\Lcal,\Mcal\rangle}^{int}\langle\ell,m\rangle}{\langle\ell,m\rangle}
	+\frac{\nabla_{\langle\Lcal^{c},\Mcal^{c}\rangle}^{int}\langle\ell',m'\rangle}{\langle\ell',m'\rangle}.
\end{displaymath}
Now we treat the second item when $\nabla_{\Xcal/S}^{\Mcal}$ is a Chern connection. We need to justify that
\begin{displaymath}
	\pi_{\ast}(F_{\nabla^{\Mcal}}\wedge\tilde{\nu})+\overline{\pi}_{\ast}(F_{\nabla^{\Mcal,c}}\wedge(-\tilde{\nu}))=0.
\end{displaymath}
Here, $\nabla^{\Mcal}$ is the global Chern connection attached to the smooth hermitian metric on $\Mcal$, and $\nabla^{\Mcal,c}$ is the conjugate connection. Therefore, the relation between their curvatures is
$F_{\nabla^{\Mcal}}=-F_{\nabla^{\Mcal,c}}$. 
The claim follows as in the flat case, \emph{i.e.} because the fibers of $\pi$ and of $\overline{\pi}$ have opposite orientation.
\end{proof}
\begin{corollary}
Given $(\Lcal,\nabla^{\Lcal}_{\Xcal/S})$, $(\Mcal,\nabla^{\Mcal}_{\Xcal/S})$,$(\Lcal^{c},\nabla^{\Lcal,c}_{\overline{\Xcal}/S})$, $(\Mcal^{c},\nabla^{\Mcal,c}_{\overline{\Xcal}/S})$ with flat connections and no assumption on rigidifications, the smooth line bundle $\langle\Lcal,\Mcal\rangle\otimes_{\Ccal^{\infty}_{S}}\langle\Lcal^{c},\Mcal^{c}\rangle$ has a canonically defined smooth logarithm, $\LOG_{int}$, that coincides with the previous construction in presence of a rigidification. Its attached connection is the tensor product of intersection connections.
\end{corollary}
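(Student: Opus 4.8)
The plan is to reduce the global, non-rigidified statement to the rigidified construction of Proposition~\ref{prop:non-rig-LOG} by a gluing argument, the independence of auxiliary choices being supplied by that same proposition together with Proposition~\ref{prop:indep-repr}. First I would cover $S$ by contractible open subsets $S^{\circ}$. Over each such $S^{\circ}$, Ehresmann's theorem presents $\pi$ as a smooth fiber bundle, so a smooth section $\sigma\colon S^{\circ}\to\Xcal$ exists; moreover the pullbacks $\sigma^{\ast}\Lcal$, $\sigma^{\ast}\Mcal$ and their conjugate counterparts are line bundles over the contractible base $S^{\circ}$, hence trivial, so rigidifications may be chosen as well. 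With these data in hand, the construction of Section~\ref{subsection:conjugate-families} applies verbatim and produces a local intersection logarithm $\LOG_{int}\mid_{S^{\circ}}=\LOG_{na}+\LOG^{c}_{na}$ on $\bigl(\langle\Lcal,\Mcal\rangle\otimes_{\Ccal^{\infty}_{S}}\langle\Lcal^{c},\Mcal^{c}\rangle\bigr)^{\times}\mid_{S^{\circ}}$.

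The second step is to glue these local pieces. By Proposition~\ref{prop:non-rig-LOG}(1), since all four connections are flat, the local logarithm $\LOG_{int}\mid_{S^{\circ}}$ is independent of the chosen section $\sigma$, of its lifting $\tilde{\sigma}$ to the universal cover, and of the rigidifications; and by Proposition~\ref{prop:indep-repr} it is also independent of the lifting $\tilde{\nu}$ of the classifying map $\nu_{\Lcal}$ modulo $R^{1}\pi_{\ast}(2\pi i\,\ZBbb)$. Consequently, on any overlap $S^{\circ}\cap S^{\circ\prime}$ the two locally constructed logarithms---built from possibly different sections, rigidifications and liftings---must agree as maps to $\CBbb/2\pi i\,\ZBbb$. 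Since a logarithm is nothing more than a function on the total space of the $\Gm$-torsor, this agreement on overlaps means the local functions patch to a single global logarithm $\LOG_{int}$ on $S$, independent of all auxiliary choices.

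Finally, the assertion on the attached connection is a local identity, so it transfers immediately: on each $S^{\circ}$, Proposition~\ref{prop:non-rig-LOG}(2) recalls that
\begin{displaymath}
	d\LOG_{int}\bigl(\langle\ell,m\rangle\otimes\langle\ell',m'\rangle\bigr)=\frac{\nabla_{\langle\Lcal,\Mcal\rangle}^{int}\langle\ell,m\rangle}{\langle\ell,m\rangle}+\frac{\nabla_{\langle\Lcal^{c},\Mcal^{c}\rangle}^{int}\langle\ell',m'\rangle}{\langle\ell',m'\rangle},
\end{displaymath}
and as both sides are independent of the local data, the identity holds over all of $S$. When a global section and a global rigidification happen to exist, the same independence shows that the present construction returns exactly the logarithm of Proposition~\ref{prop:non-rig-LOG}, which is the claimed compatibility. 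The only point requiring care---and hence the main obstacle---is confirming that the various independences are robust enough to force agreement on overlaps: one must be certain that no residual dependence on $\tilde{\nu}$ survives, which is precisely what Proposition~\ref{prop:indep-repr} guarantees in the flat case. Once that is secured, the gluing is formal.
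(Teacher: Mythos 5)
Your argument is correct and is essentially the paper's own proof, which simply says: locally over $S$ one can find sections and rigidifications, and then Proposition \ref{prop:non-rig-LOG} (independence of the section and rigidification in the flat case, plus the identification of the attached connection) yields the global statement. Your additional detail — gluing on overlaps and citing Proposition \ref{prop:indep-repr} for independence of the lifting $\tilde{\nu}$ — just makes explicit what the paper leaves implicit.
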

\begin{proof}
Locally over $S$, we can find sections and rigidify our line bundles. We conclude by Proposition \ref{prop:non-rig-LOG}. 
\end{proof}
\begin{remark}
In the flat case, and if $S$ has dimension at least 1, the relation of $\LOG_{int}$ to the intersection connection shows that $\LOG_{int}$ is compatible with the symmetry of Deligne pairings, up to a constant. Of course this argument cannot be used when $S$ is reduced to a  point. We will show below that $\LOG_{int}$ is indeed symmetric (Proposition \ref{prop:int-log-sym}).
\end{remark}

\subsection{Intersection logarithm in the universal case}
An important geometric setting when an intersection logarithm can be defined is the ``universal" product situation. A study of this case will lead below to the proof of the symmetry of intersection logarithms.

Let $(X,\sigma)$ be a pointed Riemann surface and $M_{B}(X)$ the Betti moduli space of complex characters of $\pi_{1}(X,\sigma)$. Let $(\overline{X},\overline{\sigma})$ be the conjugate Riemann surface, and identify $M_{B}(\overline{X})$ with $M_{B}(X)$. We have relative curves $X\times M_{B}(X)\rightarrow M_{B}(X)$ and similarly for $\overline{X}$. There are universal rigidified holomorphic line bundles with relative flat connections $(\Lcal_{\chi},\nabla_{\chi})$ and $(\Lcal^{c}_{\chi},\nabla_{\chi}^{c})$, whose holonomy characters over a given $\chi\in M_{B}(X)$ are $\chi$ and $\chi^{-1}$ respectively. Observe on the conjugate surface $\overline{X}$ the character we use is not $\overline{\chi}$. This is important since we seek an intersection logarithm that depends holomorphically on $\chi$.  We take the Deligne pairing
\begin{displaymath}
	\langle\Lcal_{\chi},\Lcal_{\chi}\rangle\otimes_{\Ocal_{M_{B}(X)}}\langle\Lcal^{c}_{\chi},\Lcal^{c}_{\chi}\rangle.
\end{displaymath} 
On the associated smooth line bundle, a slight modification of the construction of $\LOG_{int}$ produces a well defined logarithm, still denoted $\LOG_{int}$. The only difference is that now we do not need to change the holomorphic structure on $M_{B}(X)$. It is proven in \cite[Sec. 5]{FreixasWentworth:15} that this $\LOG_{int}$ is actually a holomorphic logarithm. More generally, we may work over $S=M_{B}(X)\times M_{B}(X)$. On $X\times S$ and $\overline{X}\times S$ we consider the pairs of universal bundles $(\Lcal_{\chi_1},\Mcal_{\chi_2})$ and $(\Lcal^{c}_{\chi_1},\Mcal^{c}_{\chi_2})$. We also have a universal intersection logarithm $\LOG_{int}$ on 
\begin{displaymath}
	\langle\Lcal_{\chi_1},\Mcal_{\chi_2}\rangle\otimes_{\Ocal_S}\langle\Lcal^{c}_{\chi_1},\Mcal^{c}_{\chi_2}\rangle,
\end{displaymath}
whose connection is the sum of intersection connections.

It proves useful to establish the symmetry of general intersection logarithms:
\begin{proposition}\label{prop:int-log-sym}
The intersection logarithms for line bundles with relative flat connections are symmetric, \emph{i.e.} compatible with the symmetry of Deligne pairings.
\end{proposition}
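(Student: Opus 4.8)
The plan is to deduce the symmetry from the universal family over $S=M_{B}(X)\times M_{B}(X)$ introduced above, where the base has positive dimension and the symmetry of the \emph{intersection connection} (as opposed to the naive logarithm) is already available.

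First I would note that the statement is pointwise over the base: the symmetry isomorphism of Deligne pairings commutes with base change, and $\LOG_{int}$ is constructed fiberwise, so it suffices to check compatibility with symmetry at each point of the base. Every such fiber is a single Riemann surface carrying two flat line bundles with holonomies $\chi_{1}$ and $\chi_{2}$, i.e.\ the restriction of the universal data over $M_{B}(X)\times M_{B}(X)$ to the point $(\chi_{1},\chi_{2})$. Since the universal $\LOG_{int}$ restricts to the fiberwise one, it is enough to treat the universal case. Write $\LOG_{int}$ and $\LOG_{int}'$ for the universal intersection logarithms on $\langle\Lcal_{\chi_1},\Mcal_{\chi_2}\rangle\otimes\langle\Lcal^{c}_{\chi_1},\Mcal^{c}_{\chi_2}\rangle$ and on $\langle\Mcal_{\chi_2},\Lcal_{\chi_1}\rangle\otimes\langle\Mcal^{c}_{\chi_2},\Lcal^{c}_{\chi_1}\rangle$, and let $\Phi$ denote the symmetry isomorphism between the two underlying line bundles.

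Over $S$, which is connected, the second part of Proposition~\ref{prop:non-rig-LOG} identifies the connection associated with each of these logarithms with the corresponding tensor product of intersection connections. As recalled in Section~\ref{section:preliminar}, the intersection connection attached to canonical extensions of flat relative connections is compatible with the symmetry of Deligne pairings; hence $\Phi$ intertwines the connection of $\LOG_{int}$ with that of $\LOG_{int}'$. Because a logarithm is determined by its associated connection up to an additive constant on a connected base---if two logarithms induce the same connection, their difference has vanishing differential in every local frame and is therefore locally, hence globally, constant---we conclude that $\LOG_{int}'\circ\Phi-\LOG_{int}$ equals a single constant $c\in\CBbb/2\pi i\,\ZBbb$.

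It remains to check $c=0$, which is the delicate point, since the connection-theoretic argument is blind to the constant. As $c$ is constant on the connected space $S$, I would evaluate it on the locus of unitary characters. There $\Lcal^{c}_{\chi}$ (resp.\ $\Mcal^{c}_{\chi}$) is the complex conjugate of $\Lcal_{\chi}$ (resp.\ $\Mcal_{\chi}$), the flat connections are Chern connections of flat hermitian metrics, and, as explained in Section~\ref{section:preliminar}, $\LOG_{int}$ reduces to the logarithm of the classically metrized Deligne pairing. The latter is symmetric---this is the standard symmetry of the $\star$-product of Green's functions in Arakelov geometry---so $c$ vanishes on the unitary locus and hence everywhere. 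One may equally specialize to the trivial character $(\mathbf{1},\mathbf{1})$, where all four bundles are trivial with trivial connection and both logarithms vanish, giving $c=0$ at once. Thus $\LOG_{int}'\circ\Phi=\LOG_{int}$, which is the asserted symmetry. The main obstacle is exactly this normalization of $c$: the curvature/connection computation cannot see it, so one must exhibit a sublocus on which an independent, manifestly symmetric model of the intersection logarithm is available.
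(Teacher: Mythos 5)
Your proof is correct and follows essentially the same route as the paper: reduce the pointwise statement to the universal family over $M_{B}(X)\times M_{B}(X)$, use the symmetry of the intersection connection on the connected base to pin down the discrepancy to a constant, and normalize that constant on a special locus (the paper specializes to the pair of trivial characters and invokes Weil reciprocity, which is the same mechanism underlying the symmetry of the metrized pairing on your unitary locus). One small inaccuracy in your alternative normalization: at the trivial character the two logarithms do not vanish (the naive logarithm is $\log(\tilde{\ell}(\widetilde{\Div m}))$ for a meromorphic function $\tilde{\ell}$); they merely agree under the symmetry map, precisely by Weil's reciprocity law.
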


\begin{proof}
This is a pointwise assertion. Deforming to $M_{B}(X)$, it is enough to deal with the universal situation parametrized by $S=M_{B}(X)\times M_B(X)$. Because the intersection connection is symmetric, and $S$ is connected, we see that the intersection logarithm is symmetric up to a constant. Now it is enough to specialize to the pair of trivial characters, when the intersection logarithm is indeed symmetric, by Weil's reciprocity law. This concludes the proof.
\end{proof}
\begin{corollary}\label{corollary:int-log-holom}
The intersection logarithm on the universal pairing 
\begin{displaymath}
	\langle\Lcal_{\chi_1},\Mcal_{\chi_2}\rangle\otimes_{\Ocal_S}\langle\Lcal^{c}_{\chi_1},\Mcal^{c}_{\chi_2}\rangle,
\end{displaymath}
parametrized by $M_{B}(X)\times M_{B}(X)$ is holomorphic.
\end{corollary}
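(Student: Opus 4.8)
The plan is to reduce the two–variable universal situation to the diagonal one–variable case $\langle\Lcal_{\chi},\Lcal_{\chi}\rangle\otimes\langle\Lcal^{c}_{\chi},\Lcal^{c}_{\chi}\rangle$ over $M_{B}(X)$, for which holomorphicity of $\LOG_{int}$ is already known from \cite[Sec.\ 5]{FreixasWentworth:15}. Write $D(\chi)=\langle\Lcal_{\chi},\Lcal_{\chi}\rangle\otimes\langle\Lcal^{c}_{\chi},\Lcal^{c}_{\chi}\rangle$ for the diagonal universal object over $M_{B}(X)$, carrying its holomorphic intersection logarithm $\LOG_{int}^{D}$, and $P(\chi_{1},\chi_{2})=\langle\Lcal_{\chi_{1}},\Mcal_{\chi_{2}}\rangle\otimes\langle\Lcal^{c}_{\chi_{1}},\Mcal^{c}_{\chi_{2}}\rangle$ for the object in the statement; here $\Mcal$ is the same universal flat bundle as $\Lcal$, so that $\Mcal_{\chi_{2}}=\Lcal_{\chi_{2}}$. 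Because the universal bundle has holonomy multiplicative in the character, there are canonical isomorphisms $\Lcal_{\chi_{1}\chi_{2}}\cong\Lcal_{\chi_{1}}\otimes\Lcal_{\chi_{2}}$ and $\Lcal^{c}_{\chi_{1}\chi_{2}}\cong\Lcal^{c}_{\chi_{1}}\otimes\Lcal^{c}_{\chi_{2}}$ of flat (rigidified) line bundles.

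First I would expand $D(\chi_{1}\chi_{2})$ by the bi-additivity of the Deligne pairing combined with its symmetry and the symmetry of $\LOG_{int}$ (Proposition \ref{prop:int-log-sym}): on both the holomorphic and the conjugate factor the two cross terms coincide, yielding a canonical isomorphism
\begin{displaymath}
	D(\chi_{1}\chi_{2})\cong D(\chi_{1})\otimes D(\chi_{2})\otimes P(\chi_{1},\chi_{2})^{\otimes 2}.
\end{displaymath}
Since the naive logarithms, hence $\LOG_{int}$, are defined pointwise as honest functions on the total space of the $\Gm$-torsor, they are additive under tensor products of Deligne pairings and compatible with these bi-additivity and symmetry isomorphisms, at worst up to an additive constant. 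Translating the displayed isomorphism into logarithms gives
\begin{displaymath}
	2\,\LOG_{int}^{P}=\LOG_{int}^{D}\!\circ\mu-\LOG_{int}^{D}\!\circ\mathrm{pr}_{1}-\LOG_{int}^{D}\!\circ\mathrm{pr}_{2}+\mathrm{const},
\end{displaymath}
where $\mu,\mathrm{pr}_{1},\mathrm{pr}_{2}\colon M_{B}(X)\times M_{B}(X)\to M_{B}(X)$ denote multiplication and the two projections (the pointwise nature of $\LOG_{int}$ makes it automatically compatible with base change, so $\LOG_{int}^{D}\circ\mu$ really is the intersection logarithm of $D(\chi_{1}\chi_{2})$).

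Now $M_{B}(X)\cong(\CBbb^{\times})^{2g}$ is an algebraic group, so $\mu,\mathrm{pr}_{1},\mathrm{pr}_{2}$ are holomorphic; each term on the right is thus the pullback of the holomorphic logarithm $\LOG_{int}^{D}$ along a holomorphic map, hence holomorphic, and therefore $2\,\LOG_{int}^{P}$ is holomorphic. To descend the factor $2$, set $g=\exp\circ\LOG_{int}^{P}$, a smooth nowhere-vanishing $\CBbb^{\times}$-valued function on the torsor. From $\bar\partial(g^{2})=2g\,\bar\partial g=0$ and the nonvanishing of $g$ we get $\bar\partial g=0$, so $g$, and hence $\LOG_{int}^{P}$, is holomorphic, which is the assertion.

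The step I expect to be the main obstacle is the compatibility of $\LOG_{int}$ (up to a constant) with the bi-additivity and symmetry isomorphisms of the Deligne pairing in the conjugate-pair setting. Symmetry is exactly Proposition \ref{prop:int-log-sym}, and bi-additivity of the associated intersection connection is recorded in \cite{FreixasWentworth:15}; passing from the connection level to the logarithm level uses only that two smooth logarithms with the same flat connection on the connected base $M_{B}(X)\times M_{B}(X)$ differ by a locally constant function, and a constant is harmless for holomorphicity. One should also check that the canonical rigidifications are compatible with $\Lcal_{\chi_{1}\chi_{2}}\cong\Lcal_{\chi_{1}}\otimes\Lcal_{\chi_{2}}$ so that no spurious section-dependent term appears; by Proposition \ref{prop:non-rig-LOG}(1) the flat intersection logarithm is in any case independent of the rigidifications, which removes this concern. (Alternatively, one could bypass the reduction and verify directly that the $(0,1)$-part of the tensor-product intersection connection of Proposition \ref{prop:non-rig-LOG}(2) equals $\bar\partial$, repeating the curvature computation of \cite{FreixasWentworth:15} in the bilinear setting, but the polarization argument above is shorter.)
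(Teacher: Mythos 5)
Your proposal is correct and follows essentially the same route as the paper: holomorphy on the diagonal from \cite[Sec.\ 5.3]{FreixasWentworth:15}, then reduction of the off-diagonal case via the multiplication map $(\chi_1,\chi_2)\mapsto\chi_1\chi_2$, the polarization formula for Deligne pairings, and the symmetry of intersection logarithms (Proposition \ref{prop:int-log-sym}). Your write-up is in fact more detailed than the paper's (notably the explicit handling of the factor $2$ via $\bar\partial(g^2)=0$ and the remark that constants are harmless), but no new idea is involved.
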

\begin{proof}
The holomorphy along the diagonal $\chi_{1}=\chi_{2}$ holds, since the intersection connection is holomorphic there by \cite[Sec.\ 5.3]{FreixasWentworth:15}. For the general case, we reduce to the diagonal. First, the multiplication map $(\chi_{1},\chi_{2})\mapsto \chi_{1}\chi_{2}$ is holomorphic, and induces the identification
\begin{displaymath}
	\Lcal_{\chi_{1}\chi_{2}}=\Lcal_{\chi_{1}}\otimes\Lcal_{\chi_{2}},
\end{displaymath}
and similarly for $\Mcal_{\chi_{1}\chi_{2}}$, etc. Second, we have the ``polarization formula'',
\begin{displaymath}
	\langle\Lcal\otimes\Mcal,\Lcal\otimes\Mcal\rangle=\langle\Lcal,\Lcal\rangle\otimes\langle\Lcal,\Mcal\rangle\otimes\langle\Mcal,\Lcal\rangle\otimes\langle\Mcal,\Mcal\rangle.
\end{displaymath}
and the symmetry of intersection logarithms already proven. These observations and the proposition are enough to conclude the result.
\end{proof}
A variant concerns the pairing of the universal bundles with a fixed hermitian line bundle $\Mcal$ on $X$, trivially extended to $X\times M_{B}(X)$.
\begin{corollary}\label{corollary:int-log-holom-2}
Let $\Mcal$ be a line bundle on $X$, $\ov{\Mcal}$ its conjugate line bundle on $\ov{X}$, and suppose that they are both endowed with a Chern connection. Extend trivially these data to $X\times M_{B}(X)$ and $\ov{X}\times M_{B}(X)$ by pull-back through the first projection. Then the intersection logarithm on 
\begin{displaymath}
	\langle\Lcal_{\chi},\Mcal\rangle\otimes\langle\Lcal_{\chi}^{c},\ov{\Mcal}\rangle,
\end{displaymath}
parametrized by $M_{B}(X)$, is holomorphic and does not depend on the choices of Chern connections.
\end{corollary}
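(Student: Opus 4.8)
The plan is to reduce the present mixed situation---where $\Mcal$ carries a Chern connection rather than a flat one---to the flat unitary case already settled in Corollary \ref{corollary:int-log-holom}, using the degree-lowering device of Lemma \ref{lemma:chern-on-M}. First I would set $d=\deg\Mcal$ and write $\Mcal=\Mcal_{0}\otimes\Ocal(d\sigma)$, where the relative degree $0$ bundle $\Mcal_{0}:=\Mcal\otimes\Ocal(-d\sigma)$ is equipped with a flat \emph{unitary} connection, of some fixed unitary holonomy character $\psi$. Lemma \ref{lemma:chern-on-M} then furnishes an isomorphism $\langle\Lcal_{\chi},\Mcal\rangle\isorightarrow\langle\Lcal_{\chi},\Mcal_{0}\rangle\otimes\sigma^{\ast}(\Lcal_{\chi})^{\otimes d}$ compatible with the naive logarithms, the factor $\sigma^{\ast}(\Lcal_{\chi})$ carrying the logarithm induced by the rigidification. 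Applying the same decomposition on the conjugate surface $\ov{X}$ identifies $\langle\Lcal_{\chi}^{c},\ov{\Mcal}\rangle$ with $\langle\Lcal_{\chi}^{c},\ov{\Mcal_{0}}\rangle\otimes\sigma^{\ast}(\Lcal_{\chi}^{c})^{\otimes d}$. Since this construction is functorial and pointwise, it holds over the whole family parametrized by $M_{B}(X)$.

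Next I would dispose of the rigidification factors. Because $\Lcal_{\chi}$ is the universal bundle rigidified along $\sigma$, the pullback $\sigma^{\ast}(\Lcal_{\chi})$ is canonically and holomorphically trivial over $M_{B}(X)$, so the induced logarithm is the zero logarithm, which is holomorphic in $\chi$; the same applies to $\sigma^{\ast}(\Lcal_{\chi}^{c})$. Consequently the holomorphy of $\LOG_{int}$ on $\langle\Lcal_{\chi},\Mcal\rangle\otimes\langle\Lcal_{\chi}^{c},\ov{\Mcal}\rangle$ is equivalent to the holomorphy of $\LOG_{int}$ on $\langle\Lcal_{\chi},\Mcal_{0}\rangle\otimes\langle\Lcal_{\chi}^{c},\ov{\Mcal_{0}}\rangle$.

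The crucial point is that $\Mcal_{0}$ now carries a flat unitary connection, placing us in the overlap of the two cases used to define $\LOG_{int}$: since $\psi$ is unitary, the conjugate Chern connection on $\ov{\Mcal_{0}}$ coincides with the flat connection of inverse holonomy $\psi^{-1}$, so $\ov{\Mcal_{0}}$ agrees with $\Mcal_{0}^{c}$ in the flat conjugate-pair sense. Thus $\langle\Lcal_{\chi},\Mcal_{0}\rangle\otimes\langle\Lcal_{\chi}^{c},\ov{\Mcal_{0}}\rangle$ is precisely the specialization to $(\chi_{1},\chi_{2})=(\chi,\psi)$ of the two-parameter universal pairing of Corollary \ref{corollary:int-log-holom}. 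The intersection logarithm there is holomorphic on $M_{B}(X)\times M_{B}(X)$, and its restriction to the holomorphic slice $M_{B}(X)\times\{\psi\}$ remains holomorphic in $\chi$; this yields the first assertion.

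Finally, the independence of the choice of Chern connection is immediate from Lemma \ref{lemma:chern-on-M}, which shows that $\LOG_{na}$ on $\langle\Lcal_{\chi},\Mcal\rangle$ involves only $\tilde{\ell}$ and $\tilde{\nu}$ and not $\nabla^{\Mcal}$; applying the same to the conjugate pairing shows that $\LOG_{int}=\LOG_{na}+\LOG_{na}^{c}$ is independent of the hermitian metric defining the Chern connection. The main obstacle is not computational but conceptual: one must verify the consistency of the conjugate-pair conventions in the unitary specialization---that $\ov{\Mcal_{0}}$ with conjugate Chern connection and $\Mcal_{0}^{c}$ with inverse holonomy $\psi^{-1}$ genuinely coincide as rigidified flat bundles on $\ov{X}$---and check that the decomposition of Lemma \ref{lemma:chern-on-M} is compatible on $X$ and $\ov{X}$ simultaneously and varies holomorphically with $\chi$, so that the specialization of Corollary \ref{corollary:int-log-holom} is legitimate.
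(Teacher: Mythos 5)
Your proposal is correct and follows essentially the same route as the paper: reduce via Lemma \ref{lemma:chern-on-M} to a relative degree~$0$ bundle with flat unitary connection, identify it with the universal bundle at a unitary character, and specialize the two-parameter holomorphy statement of Corollary \ref{corollary:int-log-holom} to the corresponding slice. The paper's proof is just a terser version of this (it does not spell out the $\sigma^{\ast}(\Lcal_{\chi})^{\otimes d}$ factors, which are handled exactly as you describe by the rigidification).
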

\begin{proof}
By Lemma \ref{lemma:chern-on-M}, we can suppose that (i) $\Mcal$ is of relative degree 0 and rigidified along $\sigma$ and (ii) its Chern connection is flat. Similarly, we can assume its conjugate line bundle comes with the conjugate connection. Therefore, there exists $\chi_{0}$ a unitary character and an isomorphism of rigidified line bundles with connections
\begin{displaymath}
	(\Lcal_{\chi_{0}},\nabla_{\chi_{0}})\isorightarrow(\Mcal,\nabla^{\Mcal}),\quad(\Lcal_{\chi_{0}}^{c},\nabla_{\chi_{0}}^{c})\isorightarrow(\ov{\Mcal},\nabla^{\ov{\Mcal}}).
\end{displaymath}
We conclude by Corollary \ref{corollary:int-log-holom} restricted to $\chi_{2}=\chi_{0}$.
\end{proof}

\subsection{Explicit construction for families}\label{subsec:real-families}
In view of arithmetic applications, it is important to exhibit natural geometric situations when the setting of Section \ref{subsection:conjugate-families} indeed obtains. With the notations therein, the difficulty is the existence of the invertible sheaf with connection $(\Lcal^{c},\nabla^{\Lcal,c}_{\overline{\Xcal}/\overline{S}})$. Even when the existence is granted, it would be useful to have at our disposal a general {\bf algebraic} procedure to build $(\Lcal^{c},\nabla^{\Lcal,c}_{\overline{\Xcal}/\overline{S}})$ from $(\Lcal,\nabla_{\Xcal/S}^{\Lcal})$. By algebraic procedure we mean a construction that can be adapted to the schematic (for instance the arithmetic) setting.

Let $\Xcal$ and $S$ be quasi-projective, smooth, connected algebraic varieties over $\CBbb$. We regard them as complex analytic manifolds. Let $\pi:\Xcal\rightarrow S$ be a smooth and proper morphism of relative dimension 1, with connected fibers. Let $\Lcal$ and $\Mcal$ be line bundles on $\Xcal$. We distinguish three kinds of relative flat connections on $\Lcal$ and $\Mcal$: real holonomies, imaginary holonomies, and the ``mixed'' case. When $S$ is reduced to a point, the mixed case is actually the general one. Furthermore, it is then possible to give an explicit description of the intersection logarithm.

\subsubsection{Real holonomies}\label{subsec:real-holonomies} Let $\Lcal$ and $\Mcal$ be invertible sheaves over $\Xcal$, that we see as holomorphic sheaves. Let $\nabla^{\Lcal}_{\Xcal/S}:\Lcal\rightarrow\Lcal\otimes\Acal^{1}_{\Xcal/S}$ and $\nabla^{\Mcal}_{\Xcal/S}:\Mcal\rightarrow\Mcal\otimes\Acal^{1}_{\Xcal/S}$ be relative flat connections, compatible with the holomorphic structures. We suppose here that the holonomies of $\nabla^{\Lcal}_{\Xcal/S}$ and $\nabla^{\Mcal}_{\Xcal/S}$ on fibers are real. On the conjugate variety $\overline{\Xcal}$, the conjugate line bundles $\overline{\Lcal}$ and $\overline{\Mcal}$ admit the complex conjugate connections to $\nabla^{\Lcal}_{\Xcal/S}$ and $\nabla^{\Mcal}_{\Xcal/S}$. Observe the families of holonomy representations do not change, because of the real assumption. We thus see that
\begin{displaymath}
	(\Lcal^{c},\nabla_{\Xcal/S}^{\Lcal,c})=(\overline{\Lcal}^{\vee},-\overline{\nabla}_{\Xcal/S}^{\Lcal}),\quad (\Mcal^{c},\nabla_{\Xcal/S}^{\Mcal,c})=(\overline{\Mcal}^{\vee},-\overline{\nabla}_{\Xcal/S}^{\Mcal}).
\end{displaymath}
The bar on the connections stands for complex conjugation.\\

\paragraph{\textbf{Explicit description of the intersection logarithm when $S=\Spec\CBbb$}} When the base scheme is a point, we write $X$, $p$, $L$, $M$, $\nabla_L$, $\nabla_M$ instead of $\Xcal$, $\sigma$, $\Lcal$, $\Mcal$, $\nabla_{\Xcal/S}^{\Lcal}$, $\nabla_{\Xcal/S}^{\Mcal}$. The first important remark is that since the connections $\nabla_{L}$ and $\nabla_{M}$ have real holonomy characters $\chi_L$ and $\chi_M$, they determine \emph{unique} real harmonic differential forms $\nu$ and $\vartheta$. Namely, harmonic differential forms on the Riemann surface $X$, invariant under the action of complex conjugation. The relation is
\begin{displaymath}
	\chi_{L}(\gamma)=\exp\left(\int_{\gamma}\nu\right),\quad\chi_{M}(\gamma)=\exp\left(\int_{\gamma}\vartheta\right),\quad\gamma\in\pi_{1}(X(\CBbb),p).
\end{displaymath}
Because $\nu$ and $\vartheta$ are real, we can write them
\begin{displaymath}
	\nu=\nu'+\overline{\nu}',\quad\vartheta=\vartheta'+\overline{\vartheta}',
\end{displaymath}
where $\nu'$ and $\vartheta'$ are holomorphic. In terms of these forms, we first provide the action of the naive logarithms on standard sections. Let $\ell$ and $m$ be rational sections of $L$ and $M$ on $X$, with disjoint divisors. After a choice of rigidification of $L$, we lift $\ell$ to a meromorphic function on the universal cover $\widetilde{X}$ (with its natural complex structure), transforming like $\chi_L$ under the action of $\pi_{1}(X,p)$. Also, we lift $\Div m$ to $\widetilde{\Div m}$. The naive logarithm for the complex structure on $X$ is determined by
\begin{displaymath}
	\LOG_{na}(\langle\ell,m\rangle)=\log(\tilde{\ell}(\widetilde{\Div m}))-\int_{\tilde{p}}^{\widetilde{\Div m}}\nu
	-\frac{i}{2\pi}\int_{X}\frac{\nabla m}{m}\wedge\nu.
\end{displaymath} 
Recall that the first two terms together do not change under a transformation $\nu\mapsto\nu+\theta$, for $\theta$ holomorphic. Using the relation with the Chern connections
\begin{displaymath}
	\nabla_{L}=\nabla_{L,ch}+2\nu',\quad\nabla_{M}=\nabla_{M,ch}+2\vartheta',
\end{displaymath}
we simplify the naive logarithm to
\begin{displaymath}
	\LOG_{na}(\langle\ell,m\rangle)=\log(\tilde{\ell}_{ch}(\widetilde{\Div m}))-\int_{\tilde{p}}^{\widetilde{\Div m}}(\overline{\nu}'-\nu')
	-\frac{i}{2\pi}\int_{X}\vartheta'\wedge\overline{\nu}'.
\end{displaymath}
We denoted $\tilde{\ell}_{ch}$ the lift of $\ell$ using the Chern connection $\nabla_{L,ch}$. Changing the holomorphic structure (and hence reversing the orientation in the last integral), the naive logarithm $\LOG_{na}^{c}$ computed with the conjugate sections $\overline{\ell}$ and $\overline{m}$ is
\begin{displaymath}
	\LOG_{na}^{c}(\langle\overline{\ell}^{\vee},\overline{m}^{\vee}\rangle)=\log(\overline{\tilde{\ell}_{ch}(\widetilde{\Div m})})-\int_{\tilde{p}}^{\widetilde{\Div m}}(\nu'-\overline{\nu}')
	+\frac{i}{2\pi}\int_{X}\overline{\vartheta}'\wedge\nu'.
\end{displaymath}
All in all, we find
\begin{displaymath}
	\begin{split}
	\LOG_{int}(\langle\ell,m\rangle\otimes\langle\overline{\ell}^{\vee},\overline{m}^{\vee}\rangle)
	&=\log|\tilde{\ell}_{ch}(\widetilde{\Div m})|^{2}-\frac{1}{\pi}\Imag\left(\int_{X(\CBbb)}\vartheta'\wedge\overline{\nu}'\right)\\
	&=\log\|\langle\ell,m\rangle\|^{2}+\frac{1}{\pi}\Imag\left(\int_{X(\CBbb)}\vartheta'\wedge\overline{\nu}'\right).
	\end{split}
\end{displaymath}
The norm on the Deligne pairing is the canonical one for pairings of degree 0 line bundles. As predicted by Proposition \ref{prop:non-rig-LOG}, the formula does not depend on the rigidification. Notice also this expression is real valued.

\subsubsection{Unitary connections} We suppose the holomorphic line bundles $\Lcal$, $\Mcal$ come with relative flat unitary connections $\nabla^{\Lcal}_{\Xcal/S}$, $\nabla^{\Mcal}_{\Xcal/S}$. Their family holonomy representations are thus unitary. For the complex conjugate family, it is therefore enough to take
\begin{displaymath}
	(\Lcal^{c},\nabla_{\Xcal/S}^{\Lcal,c})=(\overline{\Lcal},\overline{\nabla}_{\Xcal/S}^{\Lcal}),\quad (\Mcal^{c},\nabla_{\Xcal/S}^{\Mcal,c})=(\overline{\Mcal},\overline{\nabla}_{\Xcal/S}^{\Mcal}).
\end{displaymath}
Contrary to the real case, we do not need to dualize the complex conjugate line bundles.

In this case, the intersection logarithm $\LOG_{int}$ amounts to the logarithm of a smooth hermitian metric. If $\ell$ and $m$ are rational sections of $\Lcal$, $\Mcal$, with finite, \'etale, disjoint divisors over some Zariski open subset of $S$, then one easily sees
\begin{displaymath}
	\LOG_{int}(\langle\ell,m\rangle\otimes\langle\overline{\ell},\overline{m}\rangle)=\log\|\langle\ell,m\rangle\|^{2}.
\end{displaymath}
That is, the log of the square of the natural norm on the Deligne pairing.

\subsubsection{Mixed case} Suppose that $\Lcal$ is equipped with a flat relative connection (compatible with the holomorphic structure), with real holonomies, and $\Mcal$  with a relative flat unitary connection. Then  the tensor product of connections on $\Pcal=\Lcal\otimes\Mcal$ is no longer real nor unitary. Nevertheless, we can still define $\Pcal^{c}$ and $\nabla^{\Pcal,c}_{\Xcal/S}$ on the conjugate family:
\begin{displaymath}
	(\Pcal^{c}, \nabla^{\Pcal,c}_{\Xcal/S})=(\overline{\Lcal}^{\vee}\otimes\overline{\Mcal}, (-\overline{\nabla}^{\Lcal}_{\Xcal/S})\otimes\overline{\nabla}^{\Mcal}_{\Xcal/S}).
\end{displaymath}
In this case, we postpone the explicit description of the intersection logarithm to the next paragraph.
\subsection{The mixed case  over $\Spec\CBbb$}\label{subsec:mixed-case} 
Suppose now that the base scheme $S$ is a point. Therefore, we are dealing with a single compact Riemann surface $X$. We fix a base point $p\in X$. Let $P$ be a line bundle over $X$ with a connection
\begin{displaymath}
	\nabla_{P}\colon P\longrightarrow P\otimes\Omega^{1}_{X/\CBbb}.
\end{displaymath}
Let $\chi$ be the holonomy representation of $\nabla_{P}$. The absolute value $|\chi|$ is the holonomy representation of a line bundle $L$ on $X$ endowed with a holomorphic connection $\nabla_{L}$. We set
\begin{displaymath}
	M:=P\otimes L^{\vee},\quad\nabla_{M}=\nabla_{P}\otimes(-\nabla_{L}).
\end{displaymath}
Then $M$ is a line bundle with a flat unitary connection $\nabla_{M}$, and $P=L\otimes M$, $\nabla_{P}=\nabla_{L}\otimes\nabla_{M}$ are as in the mixed case.\\

\paragraph{\textbf{Explicit description of the intersection logarithm}} Let $L$ and $M$ be line bundles over $X$. Let $\nabla_{L}$ and $\nabla_{M}$ be arbitrary holomorphic connections on $L$ and $M$. We wish to describe the intersection logarithm. Taking into account the decomposition of $L$ and $M$ in terms of real/unitary holonomy flat bundles as above, the new case to study is when $\nabla_{L}$ has real holonomy $\chi_{L}$ and $\nabla_{M}$ is unitary (for the reverse case we invoke the symmetry of the intersection logarithm, Proposition \ref{prop:int-log-sym}).

Let $\ell,m$ be rational sections of $L,M$ respectively, with disjoint divisors. After trivializing $L$ at $p$, we lift $\ell$ to a meromorphic function $\tilde{\ell}$ on the universal cover $\widetilde{X}$, transforming like $\chi_{L}$ under the action of $\pi_{1}(X,p)$. We lift $\Div m$ to $\widetilde{\Div m}$. The naive logarithm for the natural complex structure on $X$ is determined by
\begin{equation}\label{eq:200}
	\begin{split}
	\LOG_{na}(\langle\ell,m\rangle)&=
	\log(\tilde{\ell}(\widetilde{\Div m}))-\int_{\tilde{p}}^{\widetilde{\Div m}}\nu-\frac{i}{2\pi}\int_{X}\frac{\nabla m}{m}\wedge\nu\\
	&=\log(\tilde{\ell}(\widetilde{\Div m}))-\int_{\tilde{p}}^{\widetilde{\Div m}}\nu.
	\end{split}
\end{equation}
We wrote $\nu$ for the real harmonic differential form determined by $\chi_{L}$. The second equality uses that $\nabla_{M}$ is a Chern connection. There is a similar expression for the naive logarithm $\LOG_{na}^{c}$. In the present case it takes the form
\begin{equation}\label{eq:201}
	\LOG_{na}^{c}(\langle\overline{\ell}^{\vee},\overline{m}\rangle)=
	-\log(\overline{\tilde{\ell}(\widetilde{\Div m})})-\int_{\tilde{p}}^{\widetilde{\Div m}}(-\nu).
\end{equation}
Adding \eqref{eq:200} and \eqref{eq:201} and simplifying, we find for the intersection logarithm
\begin{equation}\label{eq:202}
	\LOG_{int}(\langle\ell,m\rangle\otimes\langle\overline{\ell}^{\vee},\overline{m}\rangle)=2i\arg(\tilde{\ell}(\widetilde{\Div m})).
\end{equation}
This quantity is purely imaginary. Again, it does not depend on the trivialization of $L$, because $\Div m$ is a degree 0 divisor. The discussion is also valid if $M$ has arbitrary degree and is endowed with a hermitian metric. However, in this case the intersection logarithm depends on the rigidification of $L$.

\section{Logarithm for the Determinant of Cohomology}\label{section:LOG-det-coh}

\subsection{The Quillen logarithm}
In this section we proceed to define a logarithm which is analog to the so-called Quillen metric, on the determinant of the cohomology of a line bundle. Later on, in Section \ref{section:CM}, we relate our construction to the holomorphic Cappell-Miller torsion \cite{CM}. 

Let $(X,p)$ be a Riemann surface with a point. Fix a hermitian metric $h_{T_X}$ on the holomorphic tangent bundle $T_X$. Take a complex character $\chi:\pi_{1}(X,p)\rightarrow\mathbb{C}^{\times}$ and write $\Lcal_{\chi}$ for the canonically trivialized (at $p$) holomorphic line bundle with flat connection, whose holonomy representation is $\chi$. This depends on the base point $p$. On the conjugate Riemann surface $\overline{X}$ we consider the flat holomorphic line bundle $\Lcal^{c}_{\chi}$ attached to the character $\chi^{-1}$ of $\pi_{1}(\overline{X},\overline{p})=\pi_{1}(X,p)$. Consider the product of determinants of cohomology groups
\begin{displaymath}
	\det H^{\bullet}(X,\Lcal_{\chi})\otimes_{\mathbb{C}}\det H^{\bullet}(\overline{X},\Lcal^{c}_{\chi}).
\end{displaymath}
We will construct a canonical determinant on this complex line, but before we make several observations regarding these cohomology groups. To simplify the discussion, we assume that $L_\chi$ is not trivial. Then
\begin{displaymath}
	H^{0}(X,\Lcal_{\chi})=H^{0}(\overline{X},\Lcal^{c}_{\chi})=0.
\end{displaymath}
Their determinants are canonically isomorphic to $\mathbb{C}$, that affords the usual logarithm on $\mathbb{C}^{\times}$ (modulo $2\pi i\, \mathbb{Z}$). Let us examine the $H^{1}$'s. By Hodge theory and uniformization, there is a canonical isomorphism
\begin{displaymath}
	H^{1}(X,\Lcal_{\chi})\isorightarrow H^{0,1}(\chi),
\end{displaymath}
where $H^{0,1}(\chi)$ denotes the space of antiholomorphic differential forms on the universal cover $\widetilde{X}$ (with the complex structure compatible with $X$), with character $\chi$ under the action of $\pi_{1}(X,p)$ (anti-holomorphic Prym differentials). Similarly,
\begin{displaymath}
	H^{1}(\overline{X},\Lcal^{c}_{\chi})\isorightarrow H^{1,0}(\chi^{-1}),
\end{displaymath}
the space of holomorphic differential forms on $\widetilde{X}$, with character $\chi^{-1}$ (holomorphic Prym differentials). In this identification, we see anti-holomorphic differential forms on $\widetilde{\overline{X}}$ as holomorphic differential forms on $\widetilde{X}$. Given $\alpha\in H^{0,1}(\chi)$ and $\beta\in H^{1,0}(\chi^{-1})$, the differential form
$\beta\wedge\alpha$ is a $\pi_{1}(X,p)$ invariant $(1,1)$ differential form on $\widetilde{X}$. It thus descends to a smooth $(1,1)$ form on $X$.

Let us now consider a nonvanishing tensor
\begin{displaymath}
	\eta_{1}(\chi)\wedge\ldots\wedge\eta_{g-1}(\chi)\otimes \eta_{1}(\chi^{-1})\wedge\ldots\wedge\eta_{g-1}(\chi^{-1})
\end{displaymath}
 in the product of determinants. Up to a small caveat, we would like to define
 \begin{displaymath}
 	\begin{split}
 		\LOG_{L^2}(\eta_{1}(\chi)\wedge\ldots\wedge\eta_{g-1}(\chi)\otimes &\eta_{1}(\chi^{-1})\wedge\ldots\wedge\eta_{g-1}(\chi^{-1}))
	=\\
	&\log\det\left(\frac{i}{2\pi} \int_{X}\eta_{j}(\chi^{-1})\wedge\eta_{k}(\chi)\right)_{jk}\in\mathbb{C}/2\pi i\, \mathbb{Z}.
	\end{split}
 \end{displaymath}
By duality, we derive a $L^2$ logarithm on the determinant of cohomology. Notice, however, the determinant could vanish. We now follow terminology introduced by Fay \cite{Fay}. We define $V_0\subseteq M_{B}(X)$ as the locus of characters $\chi$ with $L_{\chi}$ trivial. This is equivalent to $\dim H^{0}(X,\Lcal_{\chi})\geq 1$, and by the semi-continuity theorem of coherent cohomology, shows that $V_0$ is a closed analytic subset. It is actually nonsingular of codimension $g$. We let $V$ be the locus of characters in $M_{B}(X)\setminus V_0$ for which the determinant vanishes. By Grauert's theorem, on $M_{B}(X)\setminus V_0$ we can locally choose cohomology bases that depend holomorphically on $\chi$, so that $V$ is a divisor in $M_{B}(X)\setminus V_0$. Only for $\chi\neq V$, the logarithm $\LOG_{L^2}$ can be defined. 
  
 We introduce another logarithm on the determinant of cohomology. We start with $\chi\not\in V_0$, $\chi\not\in V$. Then we put
 \begin{displaymath}
 	\LOG_{Q}=\LOG_{L^2}-\log T(\chi),
 \end{displaymath}
 where $T(\chi)$ is the complex valued analytic torsion introduced by Fay\footnote{It is necessary to normalize Fay's definition so that it coincides with the holomorphic analytic torsion on unitary characters. The normalization requires the introduction of a constant, depending only on the genus of $X$.}, and spectrally described in \cite[Sec.\ 5]{FreixasWentworth:15}. As a consequence of results by Fay \cite[Thm.\ 1.3]{Fay}, $\exp\circ\LOG_{Q}$ depends holomorphically on $\chi\not\in V$, and can be uniquely and holomorphically extended to $M_{B}(X)\setminus V_0$, with values in $\mathbb{C}$ (notice the possible vanishing!). See also \cite[Sec.\ 5]{FreixasWentworth:15} for an explicit expression of $T(\chi)$, that relates to the determinant of the matrix of Prym differentials, from which the claim follows as well. Therefore, contrary to $\LOG_{L^2}$, $\LOG_{Q}$ can be extended to $M_{B}(X)\setminus V_0$. An alternative approach to the spectral interpretation will follow later from Section \ref{section:CM}, in the comparison of $\LOG_{Q}$ with the holomorphic Cappell-Miller torsion.
 
 A final remark indicates the relation to the Quillen metric in the unitary case. If $\chi\not\in (V\cup V_0)$ is unitary, then we can chose the bases of Prym differentials so that
 \begin{displaymath}
 	\eta_{k}(\chi^{-1})=\overline{\eta_{k}(\chi)}.
 \end{displaymath}
 Also, $T(\chi)$ is the usual real valued analytic torsion in this case. We thus see that
$$
	\LOG_{Q}(\eta_{1}(\chi)\wedge\ldots\wedge\eta_{g-1}(\chi)\otimes \eta_{1}(\chi^{-1})\wedge\ldots\wedge\eta_{g-1}(\chi^{-1}))=
	\log\|\eta_{1}(\chi)\wedge\ldots\wedge\eta_{g-1}(\chi)\|^{2}_{Q}.
$$
 Hence, $\LOG_Q$ is a natural extension of the (log of the) Quillen metric in this case! The logarithm $\LOG_Q$ will be called \emph{Quillen logarithm}.
 
 We will also need to deal with the case of the trivial line bundle. In this case, we start by defining a logarithm on
 \begin{displaymath}
 	\det H^{0}(X,\Ocal_{X})\otimes\det H^{0}(\overline{X},\Ocal_{\overline{X}})=\CBbb.
 \end{displaymath} 
 This is done by assigning to 1 the volume of the normalized K\"ahler form on $X$: locally in a holomorphic coordinate $z$,
 \begin{displaymath}
 	\frac{i}{2\pi}h_{T_X}\left(\frac{\partial}{\partial z},\frac{\partial}{\partial z}\right)dz\wedge d\overline{z}.
 \end{displaymath}
 For $H^1$'s, as before Hodge theory provides canonical identifications
 \begin{displaymath}
 	\begin{split}
	& H^{1}(X,\Ocal_{X})\isorightarrow H^{0,1}(X)\quad\text{(anti-holomorphic differential forms on $X$)}\\
	& H^{1}(\overline{X},\Ocal_{\overline{X}})\isorightarrow H^{1,0}(X)\quad\text{(holomorphic differential forms on $X$)}.
	\end{split}
 \end{displaymath}
 Then, to a nonvanishing tensor like element
 \begin{displaymath}
 	 \alpha_1\wedge\ldots\wedge\alpha_g\otimes\beta_1\wedge\ldots\wedge\beta_g\in\det H^{1}(X,\Ocal_{X})\otimes\det H^{1}(\overline{X},\Ocal_{\overline{X}}),
 \end{displaymath}
 we associate the number
 \begin{displaymath}
 	\log\det\left(\frac{i}{2\pi}\int_{X}\beta_{j}\wedge\alpha_{k}\right)\in\CBbb/2\pi i\, \ZBbb.
 \end{displaymath}
 In the determinant does not vanish. It is always possible to choose $\alpha_{k}=\overline{\beta}_k$, and in this case the logarithm is univalued and takes values in $\RBbb$. The combination of both logarithms is again denoted $\LOG_{L^2}$. If $T(\Ocal_{X})$ is the analytic torsion of the trivial hermitian line bundle on $X$, then we put
 \begin{displaymath}
 	\LOG_{Q}=\LOG_{L^2}-\log T(\Ocal_{X}).
 \end{displaymath}
 Because $T(\Ocal_{X})$ is a strictly positive real number, we see that $\LOG_{Q}$ actually amounts to the (logarithm of the) Quillen metric.
 \subsection{The Deligne-Riemann-Roch isomorphism and logarithms}
Let $(X,p)$ be a pointed compact Riemann surface with conjugate $(\overline{X},\overline{p})$. Fix a hermitian metric $h_{T_X}$ on $T_X$. It defines a hermitian metric on $T_{\overline{X}}$, and also on $\omega_{X}$ and $\omega_{\overline{X}}$. Let $(\Lcal, \nabla)$ be a holomorphic line bundle, rigidified at $p$, with a flat compatible connection. Let $\chi$ be the associated holonomy character of $\pi_{1}(X,p)$. Hence we can identify $\Lcal$ and $\Lcal_{\chi}$, and also build a conjugate pair $(\Lcal^{c},\nabla^{c})$ on $\overline{X}$, corresponding to the character $\chi^{-1}$.  The notations employed here are customary, and unravel to
 \begin{equation} \label{eqn:det}
 	\lambda(\Lcal-\Ocal_{X})=\det H^{\bullet}(X,\Lcal)\otimes\det H^{\bullet}(X,\Ocal_{X})^{-1},
 \end{equation}
and similarly for $\lambda(\Lcal^{c}-\Ocal_{\overline{X}})$. The left hand side of the Deligne isomorphism carries the combination of Quillen logarithms detailed in the previous section. The right hand side is endowed with the intersection logarithm, where $\omega_{X}$ and $\omega_{\overline{X}}$ are endowed with the Chern connections for the choice we made of hermitian metrics. Observe that the logarithms on both sides depend on the rigidification of $\Lcal$ at $p$ (used to identify $\Lcal$ to $\Lcal_{\chi}$). Indeed, this is the case for $\LOG_{L^2}$ (although not of $T(\chi)$), and of the intersection logarithm of the pairing of $(\Lcal,\Lcal^{c})$ against $(\omega_X,\omega_{\overline{X}})$.

 \begin{theorem}\label{theorem:Iso-Deligne}
 Assume $\chi\in M_{B}(X)\setminus V_0$. Deligne's isomorphism is compatible with $\LOG_Q$ and $\LOG_{int}$ modulo $\pi i\,  \ZBbb$, that is
 \begin{displaymath}
 	\overline{\LOG}_{Q}=\overline{\LOG}_{int}\circ\Dcal.
 \end{displaymath}
 \end{theorem}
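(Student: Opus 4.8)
The plan is to prove the identity not on the single surface $X$ in isolation, but after deforming $(\Lcal,\nabla)$ and $(\Lcal^{c},\nabla^{c})$ into the universal family over the character variety, and then specializing back. Over $S:=M_{B}(X)\setminus V_{0}$ both sides of Deligne's isomorphism \eqref{eqn:deligne} are genuine holomorphic line bundles, and I would view $\overline{\LOG}_{Q}$ and $\overline{\LOG}_{int}\circ\Dcal$ as two reduced logarithms on one and the same line bundle, namely the squared determinant side $\{\lambda(\Lcal-\Ocal_{X})\otimes_{\CBbb}\lambda(\Lcal^{c}-\Ocal_{\ov{X}})\}^{\otimes 2}$ pulled back to $S$. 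The entire point of passing to a positive-dimensional base is that, by the uniqueness statement in the corollary of the previous section, a holomorphic logarithm is determined by its associated connection up to a single additive constant. Hence it suffices to check three things: that both logarithms are holomorphic, that their associated connections coincide, and that the resulting constant vanishes at one convenient character.

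For holomorphy, the Quillen logarithm $\LOG_{Q}=\LOG_{L^{2}}-\log T(\chi)$ has holomorphic exponential by Fay's holomorphic extension of the torsion \cite{Fay} (with the spectral description recalled in \cite{FreixasWentworth:15}), while the intersection logarithm is holomorphic by Corollary \ref{corollary:int-log-holom}. These are honest, nonvanishing logarithms away from the divisor $V$ where the $L^{2}$-pairing degenerates, so I would first run the argument on the connected open set $S\setminus V$ and extend afterwards. For the connections, the one attached to $\LOG_{int}\circ\Dcal$ is by construction the pullback under $\Dcal$ of the connection of $\LOG_{int}$, that is, the tensor product of intersection connections, whereas the connection attached to $\LOG_{Q}$ is the holomorphic Quillen connection. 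The decisive input is the main result of \cite{FreixasWentworth:15} (Theorem 5.10 and Remark 5.11 therein), which asserts precisely that Deligne's isomorphism intertwines these two connections over $M_{B}(X)$. Granting this, $\nabla_{\LOG_{Q}}=\Dcal^{\ast}\nabla_{\LOG_{int}}=\nabla_{\LOG_{int}\circ\Dcal}$, so the two logarithms differ by a locally constant function, hence by a global constant, since $S\setminus V$ is connected (one removes from the connected manifold $M_{B}(X)$ only analytic subsets of complex codimension $\geq 1$).

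It remains to fix the constant. I would specialize to a \emph{unitary} character $\chi$, which necessarily lies in $S\setminus V$ because for unitary $\chi$ the $L^{2}$-pairing is positive definite. There the normalization of $T(\chi)$ forces it to be the usual Ray--Singer torsion, so $\LOG_{Q}$ reduces to the logarithm of the Quillen metric, while $\LOG_{int}$ reduces to the logarithm of the metrized Deligne pairing, i.e.\ the $\star$-product metric. The classical Deligne isometry for the metrized Riemann--Roch isomorphism then yields $\overline{\LOG}_{Q}=\overline{\LOG}_{int}\circ\Dcal$ at this $\chi$, so the constant is zero. Two features then shape the final statement: first, Deligne's isomorphism is canonical only up to sign, and replacing $\Dcal$ by $-\Dcal$ shifts $\LOG_{int}\circ\Dcal$ by $\log(-1)=\pi i$, which is exactly why the equality must be read in $\CBbb/\pi i\,\ZBbb$ rather than in $\CBbb/2\pi i\,\ZBbb$; second, the identity proven on $S\setminus V$ extends across $V$ because $\exp\overline{\LOG}_{Q}$ and $\exp(\overline{\LOG}_{int}\circ\Dcal)$ are both holomorphic on $S$ and agree on the dense open subset $S\setminus V$.

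The step I expect to be the genuine obstacle is the constant-fixing: one must verify that the genus-dependent normalization built into Fay's torsion and the universal topological constant occurring in the classical Deligne isometry combine to leave no residual constant modulo $\pi i\,\ZBbb$. The deformation argument cleanly reduces the whole theorem to this single pointwise comparison on the unitary locus, but it is precisely there that the classical analytic input (the Bismut--Gillet--Soul\'e / Deligne isometry, together with the exact normalization of $T(\chi)$) has to be invoked with care.
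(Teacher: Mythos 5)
Your proposal follows essentially the same route as the paper's own proof: deform to the universal family over $M_{B}(X)$, invoke the flatness/intertwining result of \cite{FreixasWentworth:15} to conclude the two logarithms differ by a constant on the connected set $M_{B}(X)\setminus(V\cup V_{0})$, and then kill the constant modulo $\pi i\,\ZBbb$ by evaluating at a unitary character where the classical Deligne isometry applies. The argument is correct and matches the paper's first (and, via your holomorphy remarks, also its second) argument.
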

 \begin{remark}\label{remark:iso-Deligne}
 The explanation for the reduction modulo $\pi i\,  \ZBbb$ is that Deligne's isomorphism is only canonical up to a sign. Hence, at most we are able to show that the two logarithms correspond up to $\log(\pm 1)$, which is zero modulo $\pi i\,  \ZBbb$.
 \end{remark}
 \begin{proof}[Proof of Theorem \ref{theorem:Iso-Deligne}]
 We provide two arguments. Both exploit the universal product fibrations $\Xcal=X\times M_{B}(X)\rightarrow M_{B}(X)=S$ and $\Xcal^{c}=\overline{X}\times M_{B}(X)\rightarrow M_{B}(X)$ with the universal line bundles $\Lcal_{\chi}$ and $\Lcal_{\chi}^{c}$. There is a universal Deligne type isomorphism
 \begin{displaymath}
 	\Dcal_{\chi}\colon (\lambda(\Lcal_{\chi}-\Ocal_{\Xcal})\otimes_{\Ocal_S}\lambda(\Lcal^{c}_{\chi}-\Ocal_{\Xcal^{c}}))^{\otimes 2}
	\isorightarrow\langle\Lcal_{\chi},\Lcal_{\chi}\otimes\omega_{X}^{-1}\rangle\otimes_{\Ocal_S}\langle\Lcal^{c}_{\chi},\Lcal^{c}_{\chi}\otimes\omega_{\overline{X}}^{-1}\rangle.
 \end{displaymath}
 The first argument refers to \cite[Sec.\ 5]{FreixasWentworth:15}, where we showed that over the connected open subset $M_{B}(X)\setminus (V\cup V_{0})$, 
 \begin{displaymath}
 	d\LOG_{Q}=d\LOG_{int}\circ\Dcal_{\chi}.
 \end{displaymath}
 This implies the equality $\LOG_{Q}=\LOG_{int}\circ\Dcal$ holds, up to a constant $\kappa$, on $M_{B}(X)\setminus (V\cup V_0)$. By smoothness of the logarithms, the same is true over $M_{B}(X)\setminus V_0$. Now we need only to check that the constant is zero modulo $\pi i\,  \ZBbb$. For this, it is enough to evaluate at a suitable $\chi$ and suitable sections. Let us take $\chi$ unitary, $\chi\not\in (V\cup V_0)$. Since unitary characters lie in the complement of $V$, and it is enough to take it  to be nontrivial. Then, $\Lcal^{c}_{\chi}$ is complex conjugate to $\Lcal_{\chi}$. Let $\ell$, $m$ be rational sections of $\Lcal_{\chi}$ with disjoint divisors, that we see as holomorphic functions $\tilde{\ell}$, $\tilde{m}$ on the universal cover $\widetilde{X}$ with character $\chi$ under the action of $\Gamma$. The complex conjugate sections of $\ell$ and $m$ are defined by conjugating $\tilde{\ell}$ and $\tilde{m}$, and we write $\bar{\ell}$ and $\bar{m}$. They can also be obtained algebraically from $\ell$, $m$ by effecting the base change $\Spec\CBbb\rightarrow\Spec\CBbb$ induced by complex conjugation. Also, let $\theta$ be a meromorphic section of $\omega_{X}$, with complex conjugate $\overline{\theta}$. We may assume the divisor of $\theta$ is disjoint from $\Div\ell\cup\Div m$. Then, because the Deligne isomorphism commutes to base change, we have
 \begin{displaymath}
 	\langle\ell,m\otimes\theta^{-1}\rangle\otimes\langle\overline{\ell},\overline{m}\otimes\overline{\theta}^{-1}\rangle
 \end{displaymath}
 corresponds under $\Dcal^{-1}_{\chi}$ to a nonvanishing tensor of the form
 \begin{displaymath}
 	\pm (\overline{\eta}_1\wedge\ldots\wedge\overline{\eta}_{g-1}\otimes\eta_{1}\wedge\ldots\wedge\eta_{g-1})^{\otimes 2}
	\otimes (\overline{\alpha}_{1}\wedge\ldots\wedge\overline{\alpha}_{g}\otimes\alpha_{1}\wedge\ldots\wedge\alpha_{g})^{\otimes 2}.
 \end{displaymath}
 But for these sections, we have on the one hand
 \begin{align}
 \begin{split} \label{eq:19}
 	\LOG_{int}(\langle\ell,m\otimes\theta^{-1}\rangle\otimes\langle\overline{\ell},\overline{m}\otimes\overline{\theta}^{-1}\rangle)&=\log\|\langle\ell,m\otimes\theta^{-1}\rangle\|^{2}\\
	&=\log|\tilde{\ell}(\Div\tilde{m}-\widetilde{\Div\theta})|^{2},
	\end{split}
 \end{align}
 and on the other hand
 \begin{align}
		\LOG_{Q}&(\pm (\overline{\eta}_1\wedge\ldots\wedge\overline{\eta}_{g-1}\otimes\eta_{1}\wedge\ldots\wedge\eta_{g-1})^{\otimes 2}
	\otimes (\overline{\alpha}_{1}\wedge\ldots\wedge\overline{\alpha}_{g}\otimes\alpha_{1}\wedge\ldots\wedge\alpha_{g})^{\otimes (-2)}) \notag \\
	&=\log(\pm 1)+2\log\|\eta_{1}\wedge\ldots\wedge\eta_{g-1}\|_{Q}^{2}-2\log\|\alpha_{1}\wedge\ldots\wedge\alpha_{g}\|^{2}_{Q} \label{eq:20}
 \end{align}
 But now, $\log(\pm 1)=0$ modulo $\pi i\,  \ZBbb$ and the Deligne isomorphism is an isometry in the unitary case. Hence, the expressions \eqref{eq:19} and \eqref{eq:20} are equal modulo $\pi i\,  \ZBbb$. Therefore, the constant $\kappa$ is zero modulo $\pi i\,  \ZBbb$. 
 
 The second argument is similar, but replaces \cite[Sec.\ 5]{FreixasWentworth:15} by the following self-contained remarks. On the one hand, the intersection logarithm on the Deligne pairings is holomorphic on $M_{B}(X)$, after Corollaries \ref{corollary:int-log-holom}--\ref{corollary:int-log-holom-2}. On the other hand, the Quillen logarithm is holomorphic on its domain $M_{B}(X)\setminus( V_{0}\cup V)$, by construction. Finally, both logarithms coincide on the unitary locus of $M_{B}(X)\setminus( V_{0}\cup V)$, which is a maximal totally real subvariety. Hence, by a standard argument (\emph{cf.} \cite[Lemma 5.12]{FreixasWentworth:15}) both holomorphic logarithms must coincide on the whole $M_{B}(X)\setminus( V_{0}\cup V)$. In either approach, the proof of the theorem is complete.
 \end{proof}
 \begin{corollary}\label{corollary:iso-Deligne}
 The Quillen logarithm $\LOG_{Q}$, initially defined on $M_{B}(X)\setminus( V_{0}\cup V)$, uniquely extends to the whole $M_{B}(X)$. The extension is compatible with Deligne's isomorphism (modulo $\pi i\,  \ZBbb$) and is holomorphic.
 \end{corollary}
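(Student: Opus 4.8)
The plan is to reduce the extension to the single locus $V_{0}$, extend the holomorphic section $\exp\circ\LOG_{Q}$ across it, and then read off the compatibility with $\Dcal$ by continuity. The extension across the divisor $V$ is already in hand: as recalled just before Theorem~\ref{theorem:Iso-Deligne} (following Fay), $\exp\circ\LOG_{Q}$ is holomorphic on $M_{B}(X)\setminus V_{0}$, vanishing along $V$. So the only remaining issue is to cross $V_{0}$, where the construction of $\LOG_{Q}$ via Prym differentials breaks down because $H^{0}(X,\Lcal_{\chi})$ jumps.

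First I would extend the section itself. Over $M_{B}(X)\setminus V_{0}$ the trivialization $\exp\circ\LOG_{Q}$ is a holomorphic section of the dual of the determinant line $\lambda(\Lcal_{\chi}-\Ocal)\otimes_{\CBbb}\lambda(\Lcal^{c}_{\chi}-\Ocal)$, which is a genuine holomorphic line bundle on all of $M_{B}(X)$. Since $V_{0}$ is nonsingular of codimension $g$, for $g\geq 2$ it has codimension at least two, and the second Riemann extension theorem extends $\exp\circ\LOG_{Q}$ holomorphically across $V_{0}$: trivializing the determinant line on a neighbourhood $U$ meeting $V_{0}$ turns the section into a holomorphic function on $U\setminus V_{0}$, which extends to $U$ because $V_{0}\cap U$ has codimension $\geq 2$. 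This produces a holomorphic $\LOG_{Q}$ on all of $M_{B}(X)$, and uniqueness is immediate from the identity theorem, since $M_{B}(X)\setminus V_{0}$ is connected and dense.

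It then remains to propagate the compatibility with $\Dcal$. On $M_{B}(X)\setminus V_{0}$ one has $\overline{\LOG}_{Q}=\overline{\LOG}_{int}\circ\Dcal$ by Theorem~\ref{theorem:Iso-Deligne}; the right-hand side is holomorphic on the whole of $M_{B}(X)$, because $\LOG_{int}$ on the right-hand pairings of \eqref{eqn:deligne} is holomorphic everywhere (expand each factor bilinearly and apply Corollary~\ref{corollary:int-log-holom} on the diagonal together with Corollary~\ref{corollary:int-log-holom-2}), while $\Dcal$ is a holomorphic isomorphism of line bundles over all of $M_{B}(X)$. Since $\overline{\LOG}_{Q}$ and $\overline{\LOG}_{int}\circ\Dcal$ are now both holomorphic and agree on the dense open set $M_{B}(X)\setminus V_{0}$, they agree everywhere, giving the asserted compatibility modulo $\pi i\,\ZBbb$. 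The point I expect to be most delicate is reconciling this mod-$\pi i$ statement with the extension of the genuine, mod-$2\pi i$, logarithm: Theorem~\ref{theorem:Iso-Deligne} only controls $\overline{\LOG}_{Q}$ (the sign ambiguity of $\Dcal$ being unavoidable), so it alone cannot produce the unreduced extension or force its single-valuedness. It is precisely the codimension $\geq 2$ of $V_{0}$ — equivalently, the fact that $\exp\circ\LOG_{Q}$ itself, and not merely its square, is a single-valued holomorphic section off $V_{0}$ — that rules out sign monodromy around $V_{0}$ and yields the extension of $\LOG_{Q}$ without ambiguity. For $g\leq 1$ the determinant line is canonically trivial and the statement may be checked directly.
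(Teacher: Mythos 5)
Your route is genuinely different from the paper's. The paper does not touch $\exp\circ\LOG_{Q}$ near $V_{0}$ at all: it first extends the \emph{reduction} $\overline{\LOG}_{Q}$ across $V_{0}$ by brute force, namely by declaring it equal to $\overline{\LOG}_{int}\circ\Dcal$, which is already defined on all of $M_{B}(X)$; it then lifts this to a $\CBbb/2\pi i\,\ZBbb$-valued logarithm using the fact that the unreduced $\LOG_{Q}$ exists on the dense open complement of $V_{0}$; and it gets holomorphy by observing that $\overline{\partial}\LOG_{Q}$ is a well-defined smooth $(0,1)$-form vanishing on a dense open set. Your argument instead extends the section $\exp\circ\LOG_{Q}$ itself by the second Riemann extension theorem across the codimension-$g$ locus $V_{0}$ and only afterwards invokes Theorem \ref{theorem:Iso-Deligne} plus the identity theorem. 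What your approach buys is a cleaner resolution of the mod-$\pi i$ versus mod-$2\pi i$ issue (no sign monodromy to worry about, as you note); what it costs is uniformity in the genus.

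That cost is where the gap lies. For $g=1$ the locus $V_{0}$ has codimension exactly $1$, so Hartogs does not apply and you would need a boundedness estimate on $\exp\circ\LOG_{Q}$ near $V_{0}$, i.e.\ control of Fay's torsion $T(\chi)$ as $\chi$ approaches the trivial character; the assertion that ``the determinant line is canonically trivial and the statement may be checked directly'' is not a proof and is not obviously easy (it amounts to an explicit theta-function computation). The paper's argument covers $g=1$ with no extra work, precisely because it never needs to extend $\exp\circ\LOG_{Q}$ intrinsically: the extension is \emph{defined} by transport of $\LOG_{int}$ through $\Dcal$. A second, smaller point: Hartogs alone produces a holomorphic section that could a priori vanish somewhere on $V_{0}$, in which case it would not define a logarithm there; the nonvanishing only follows after you compare the (square of the) extended section with $(\exp\circ\LOG_{int})\circ\Dcal$, which is nowhere zero. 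You should make that step explicit rather than leaving it implicit in the final continuity argument. (Also, $\exp\circ\LOG_{Q}$ does not in fact vanish along $V$ --- a posteriori Theorem \ref{theorem:Iso-Deligne} forbids it, since $\LOG_{int}$ is finite there --- though this slip does not affect your argument.)
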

 \begin{proof}
 The extension of the Quillen logarithm, modulo $\pi i\,  \ZBbb$, follows from the theorem and the fact that the intersection logarithm is already defined on the whole $M_{B}(X)$. At the same time, this logarithm is defined modulo $2\pi i\, \ZBbb$ on a dense open subset. Both observations together imply that the Quillen logarithm can be extended everywhere. For the holomorphy, by the very construction of $\LOG_{Q}$, it is satisfied on a dense open subset of $M_{B}(X)$, and hence everywhere (for the extension). Indeed, this amounts to the vanishing on the well defined smooth $(0,1)$-form $\overline{\partial {\LOG}_{Q}}$, and it is enough to check the vanishing on a dense open subset.

 \end{proof}
 
It is important to notice that the theorem implies the compatibility of Deligne's isomorphism for general conjugate families modulo $\pi i\,  \ZBbb$, as well as the compatibility with the Quillen type connections and the intersection connections. 
\begin{corollary}
In the case of general K\"ahler fibrations\footnote{For a K\"ahler fibration, here we mean a smooth family of curves $\pi:\Xcal\rightarrow S$ with a choice of smooth hermitian metric on $\omega_{\Xcal/S}$. This is not the standard definition in higher relative dimensions.} and for a conjugate pair of data $\pi:\Xcal\rightarrow S$, $(\Lcal,\nabla^{\Lcal}_{\Xcal/S})$, and $\overline{\pi}:\overline{\Xcal}\rightarrow\overline{S}$, $(\Lcal^{c},\nabla^{\Lcal,c}_{\overline{\Xcal}/\overline{S}})$, with rigidifications along a given section, the Deligne isomorphism
\begin{align}
\begin{split} \label{eqn:D-iso}
	\Dcal:(\det R\pi_{\ast}(\Lcal-\Ocal_{\Xcal})&\otimes_{\Ccal^{\infty}_{S}}\det R\overline{\pi}_{\ast}(\Lcal^{c}-\Ocal_{\overline{\Xcal}}))^{\otimes 2} \\
&	\isorightarrow\langle\Lcal,\Lcal\otimes\omega_{\Xcal/S}^{-1}\rangle\otimes_{\Ccal^{\infty}_{S}}\langle\Lcal^{c},\Lcal^{c}\otimes\omega_{\overline{\Xcal}/\overline{S}}^{-1}\rangle
\end{split}
\end{align}
transforms $\LOG_{Q}$ into $\LOG_{int}$, modulo $\pi i\,  \ZBbb$. As a consequence, the Deligne isomorphism $\Dcal$ is parallel with respect 
to the connections $\nabla_{Q}$ on the left hand side of \eqref{eqn:D-iso} and $\nabla_{int}$ on the right.
\end{corollary}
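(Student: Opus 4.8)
The plan is to reduce the assertion to the single‑surface statement already established in Theorem~\ref{theorem:Iso-Deligne}. The key observation is that a logarithm is nothing but an additive trivialization, so the identity $\overline{\LOG}_Q=\overline{\LOG}_{int}\circ\Dcal$ between reduced logarithms on a line bundle over $S$ is a \emph{pointwise} assertion over the base: it holds globally precisely when it holds on each fiber $\Lcal_s^\times$, $s\in S$. First I would fix $s\in S$ and restrict all data to the fiber $X=\Xcal_s$, a compact Riemann surface. The metric on $\omega_{\Xcal/S}$ restricts to a hermitian metric on $\omega_X$ and induces one on $\omega_{\overline{X}}$, while the rigidified flat pair $(\Lcal,\nabla^{\Lcal}_{\Xcal/S})$, $(\Lcal^c,\nabla^{\Lcal,c}_{\overline{\Xcal}/\overline{S}})$ restricts to a rigidified conjugate pair of flat line bundles on $X$ and $\overline X$ with inverse holonomy characters $\chi$, $\chi^{-1}$. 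Thus the fiber data is exactly the setting of Theorem~\ref{theorem:Iso-Deligne}.

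The step requiring care is to check that the three global objects restrict to their fiberwise counterparts. For the Deligne isomorphism $\Dcal$ this is the compatibility of the determinant of cohomology, of the Deligne pairing, and of Deligne's isomorphism itself with base change, so that $\Dcal$ restricted to $s$ is the Deligne isomorphism of $X=\Xcal_s$. For the intersection logarithm the point is immediate: $\LOG_{int}=\LOG_{na}+\LOG_{na}^c$ was constructed pointwise (cf.\ Proposition~\ref{prop:indep-repr} and the discussion preceding it), so its restriction to $\Lcal_s$ is the intersection logarithm of the fiber. For the Quillen logarithm one uses that both the $L^2$‑pairing $\LOG_{L^2}$ and the analytic torsion $T(\chi)$ entering $\LOG_Q=\LOG_{L^2}-\log T(\chi)$ are defined fiber by fiber from the Dolbeault complex of $\Xcal_s$ and the metric on $\omega_X$; hence $\LOG_Q$ restricts to the fiber Quillen logarithm. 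I expect this bookkeeping — confirming that the family constructions are genuinely fiberwise, and that base change identifies the restricted isomorphism with the fiber Deligne isomorphism — to be the main, though essentially routine, obstacle.

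With these identifications in place, Theorem~\ref{theorem:Iso-Deligne}, together with its extension across the exceptional loci in Corollary~\ref{corollary:iso-Deligne} and the separate definition of $\LOG_Q$ for the trivial bundle, yields $\overline{\LOG}_Q=\overline{\LOG}_{int}\circ\Dcal$ on every fiber, for every holonomy character of $\Xcal_s$ and independently of whether it lies in the loci $V_0$ or $V$. Since the identity holds at each $s\in S$, it holds globally, which is the first assertion of the corollary.

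For the consequence on connections I would argue purely formally. The connection attached to a logarithm is characterized by \eqref{eq:1}, namely $\nabla_{\LOG}e/e=e^\ast(d\LOG)$, and since the ambiguity by $\pi i\,\ZBbb$ is locally constant it is annihilated by $d$; thus $d\LOG_Q$ and $d(\LOG_{int}\circ\Dcal)$ agree as honest $1$‑forms after pullback by a local frame. Consequently $\Dcal$ intertwines $\nabla_Q=\nabla_{\LOG_Q}$ with $\nabla_{int}=\nabla_{\LOG_{int}}$, that is $\Dcal^\ast\nabla_{int}=\nabla_Q$, which is exactly the statement that $\Dcal$ is parallel with respect to these connections.
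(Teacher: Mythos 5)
Your proposal is correct and follows the same route the paper intends: the paper states this corollary as an immediate consequence of Theorem \ref{theorem:Iso-Deligne} (together with Corollary \ref{corollary:iso-Deligne}), and your argument simply makes explicit the two ingredients that justify this — the pointwise (fiberwise) nature of the logarithm identity combined with base-change compatibility of the determinant of cohomology, the Deligne pairing and Deligne's isomorphism, and the fact that the locally constant $\pi i\,\ZBbb$ ambiguity is killed by $d$, so the connection statement follows by differentiating. No gaps.
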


\section{The Quillen logarithm and the Cappell-Miller torsion}\label{section:CM}
In this section, we prove Theorem \ref{thm:Quillen-CM}, to the effect that the construction of the Quillen logarithm is equivalent to a variant of holomorphic analytic torsion,  proposed by Cappell-Miller \cite{CM}. Our observation is that the Cappell-Miller torsion behaves holomorphically in holomorphic families of flat line bundles on a fixed Riemann surface. In the proof, we make essential use of Kato's theory of analytic perturbations of linear closed operators \cite[Chap.\ VII]{Kato}, which turns out to be particularly well-suited for these purposes. 

Let $X$ be a fixed compact Riemann surface with a smooth hermitian metric on $T_{X}$, $p$ a base point, and $(\ov{X},\ov{p})$ the conjugate datum. Let $M_{B}(X)$ be the space of characters of $\pi_{1}(X,p)$, and $\Lcal$, $\Lcal^{c}$ the holomorphic universal bundles on $\Xcal:=X\times M_{B}(X)$ and $\Xcal^{c}:=\ov{X}\times M_{B}(X)$. Recall the fibers $\Lcal_{\chi}$, $\Lcal^{c}_{\chi}$ at $\chi\in M_{B}(X)$, are canonically trivialized at the base point and have holonomy representations $\chi$ and $\chi^{-1}$, respectively. There are corresponding universal relative holomorphic connections. Write $\pi$ and $\pi^{c}$ for the projection maps onto $M_{B}(X)$.

Inspired by Quillen \cite{Quillen}, Bismut-Freed \cite{Bismut-Freed-1, Bismut-Freed-2} and Bismut-Gillet-Soul\'e \cite{BGS1, BGS2, BGS3}, we present the determinant of cohomology $\lambda(\Lcal)=\det R\pi_{\ast}(\Lcal)$ as the determinant of a truncated Dolbeault complex of finite dimensional \emph{holomorphic} vector bundles. The difference with the cited works is in the holomorphicity of these vector bundles. One can similarly proceed for $\Lcal^{c}$.

Introduce the relative Dolbeault complex of $\Lcal$, considered as a smooth complex line bundle with a $\dbar$-operator.
More precisely, this is the complex of sheaves of $\Ccal^{\infty}_{M_{B}(X)}$-modules 
\begin{displaymath}
	\Dcal_{\Xcal/M_{B}(X)}=\Dcal_{\Xcal/M_{B}(X)}(\Lcal)\colon 0\longrightarrow \Acal^{0,0}_{\Xcal/M_{B}(X)}(\Lcal)\overset{\ov{\partial}_{X}}{\longrightarrow} \Acal^{0,1}_{\Xcal/M_{B}(X)}(\Lcal)\longrightarrow 0.
\end{displaymath}
We have decorated the relative Dolbeault operator $\ov{\partial}_{X}$ with the index $X$ to emphasize the fact that we are in a product situation, and we are only differentiating in the $X$ direction. The cohomology sheaves of the complex  $\pi_{\ast}\Dcal_{\Xcal/M_{B}(X)}$ will be written $\Hcal^{0,p}_{\ov{\partial}_{X}}(\Lcal)$. After \cite[Thm.3.5]{BGS3}, there are canonical isomorphisms of sheaves of $\Ccal^{\infty}_{M_{B}(X)}$-modules
\begin{displaymath}
	\rho_{p}\colon R^{p}\pi_{\ast}(\Lcal)\otimes\Ccal^{\infty}_{M_{B}(X)}\isorightarrow \Hcal^{0,p}_{\ov{\partial}_{X}}(\Lcal).
\end{displaymath}
By Proposition 3.10 of \emph{loc. cit.}, there is a natural holomorphic structure on $\Hcal^{0,p}_{\ov{\partial}_{X}}(\Lcal)$, defined in terms of both the relative and the global Dolbeault complexes of $\Lcal$. For the sake of brevity, we refer to it as the \emph{holomorphic structure of Bismut-Gillet-Soul\'e}. They prove their structure coincides with the holomorphic structure on the coherent sheaves $R^{p}\pi_{\ast}(\Lcal)$, through the isomorphism $\rho_{p}$. Finally, in  \cite[Lemma 3.8]{BGS3}
it is shown that $\pi_{\ast}\Dcal_{\Xcal/M_{B}(X)}$ ($\mathscr{E}^{\bullet}$ in the notation of the cited paper) is a perfect complex in the category of sheaves of $\Ccal^{\infty}_{M_{B}(X)}$-modules. 
As a result, to compute higher direct images and the determinant of cohomology, we can equivalently work with the complex $\pi_{\ast}\Dcal_{\Xcal/M_{B}(X)}$ and the holomorphic structure of Bismut-Gillet-Soul\'e.

Associated to the relative connection on $\Lcal$ and the hermitian metric on $T_{X}$, there are non-self-adjoint Laplace operators $\Delta^{0,p}=(\ov{\partial}_{X}+{\ov\partial}_{X}^{\sharp})^2$ on $\pi_{\ast}\Dcal_{\Xcal/M_{B}(X)}$. Fiberwise, they restrict to the Laplace type operators of Cappell-Miller. We use the notation $\Delta^{0,p}_{\chi}$ for the restriction to the fiber above $\chi$, and similarly for other operators. Let us explicitly describe them. Let $\widetilde{X}$ be the universal cover of $X$, with fundamental group $\Gamma=\pi_{1}(X,p)$ and the complex structure induced from $X$. The Dolbeault complex of $\Lcal_{\chi}$ is isomorphic to the Dolbeault complex 
\begin{displaymath}
	A^{0,0}(\widetilde{X},\chi)\overset{\ov{\partial}}{\longrightarrow} A^{0,1}(\widetilde{X},\chi),
\end{displaymath}
where $A^{0,p}(\widetilde{X},\chi)$ indicates the smooth differential $\chi$-equivariant forms of type $(0,p)$, and $\ov{\partial}$ is the standard Dolbeault operator on functions on $\widetilde{X}$. In the identification, we are implicitly appealing to the canonical trivialization of $\Lcal_{\chi}$ at the base point $p$. The metric on $T_{X}$ induces a metric on $T_{\widetilde{X}}$ and a formal adjoint $\ov{\partial}^{\ast}$, defined as usual in terms of the Hodge $\ast$ operator. Let $D^{0,p}=(\ov{\partial}+\ov{\partial}^{\ast})^{2}$. Then, the Dolbeault complex of $\Lcal_{\chi}$ and $\Delta^{0,\bullet}_{\chi}$ are identified to $(A^{0,\bullet}(\widetilde{X},\chi),\ov{\partial}, D^{0,\bullet})$. To make the holomorphic dependence on $\chi$ explicit, we parametrize $M_{B}(X)$ by $H^{1}_{dR}(X,\CBbb)$, and further identify cohomology classes with harmonic representatives. In particular, let $\nu$ be a harmonic representative for $\chi$. Define the invertible function
\begin{displaymath}
	G_{\nu}(z)=\exp\left(\int_{\tilde{p}}^{z}\nu\right).
\end{displaymath}
We build the isomorphism of complexes
\begin{displaymath}
	\xymatrix{
		A^{0,0}(\widetilde{X},\chi)\ar[r]^{\ov{\partial}}\ar[d]_{G_{\nu}^{-1}\cdot}	&A^{0,1}(\widetilde{X},\chi)\ar[d]^{G_{\nu}^{-1}\cdot}\\
		A^{0,0}(\widetilde{X})^{\Gamma}\ar[r]^{\ov{\partial}-\nu''}		&A^{0,1}(\widetilde{X})^{\Gamma}.
	}
\end{displaymath}
Accordingly, the operators $\ov{\partial}^{\ast}$ and $D^{0,p}$ can be transported to the new complex, through conjugation by $G_{\nu}$. We indicate with an index $\nu$ the new conjugated operators, so that for instance $\ov{\partial}_{\nu}=\ov{\partial}-\nu''$, and similarly for $\ov{\partial}^{\ast}_{\nu}$ and $D^{0,p}_{\nu}$. After all these identifications, we see that $\ov{\partial}^{\sharp}_{\chi}$ will correspond to $\ov{\partial}^{\ast}_{\nu}$ and $\Delta^{0,p}_{\chi}$ will correspond to $D^{0,p}_{\nu}$. 
\begin{lemma}\label{lemma:family-A}
$\hbox{}$
\begin{enumerate}
	\item The operators $D_{\nu}^{0,p}$ form a holomorphic family of type (A) in the sense of Kato \cite[Chap.\ VII, Sec.\ 2]{Kato}: i) they all share the same domain $A^{0,p}(X)$ and are closed with respect to the $L^{2}$ structure induced by the choice of hermitian metric on $T_{X}$ and ii) they depend holomorphically in $\nu$.
	\item The operators $D_{\nu}^{0,p}$ have compact resolvent, and spectrum bounded below and contained in a ``horizontal" parabola.
\end{enumerate}
\end{lemma}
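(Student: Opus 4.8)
The plan is to realize each $D^{0,p}_{\nu}$ as a lower-order, holomorphically varying perturbation of the fixed nonnegative self-adjoint Hodge Laplacian $D^{0,p}_{0}=(\ov{\partial}+\ov{\partial}^{\ast})^{2}$ obtained at $\nu=0$, and then to read off both assertions from this structure together with ellipticity. The starting observation is that conjugation by the scalar function $G_{\nu}$, which satisfies $d\log G_{\nu}=\nu$, changes a differential operator only in its lower-order terms: since $\ov{\partial}_{\nu}=\ov{\partial}-\nu''$ is $\ov{\partial}$ plus the bundle map ``wedge with $\nu''$'', and since $\ov{\partial}^{\ast}$ is first order, the conjugate $\ov{\partial}^{\ast}_{\nu}=G_{\nu}^{-1}\ov{\partial}^{\ast}G_{\nu}$ equals $\ov{\partial}^{\ast}$ plus the zeroth-order contraction against the $(1,0)$-part of $\nu$. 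Writing $R_{\nu}:=(\ov{\partial}_{\nu}+\ov{\partial}^{\ast}_{\nu})-(\ov{\partial}+\ov{\partial}^{\ast})$, a bundle endomorphism depending $\CBbb$-linearly on $\nu$, one gets
\[
 D^{0,p}_{\nu}=D^{0,p}_{0}+P_{\nu},\qquad P_{\nu}=\{\ov{\partial}+\ov{\partial}^{\ast},R_{\nu}\}+R_{\nu}^{2},
\]
where $P_{\nu}$ is a differential operator of order $\leq 1$ whose coefficients are polynomials of degree $\leq 2$ in $\nu$. The essential point to keep in mind is that, because $G_{\nu}$ is not unitary, $\ov{\partial}^{\ast}_{\nu}$ is \emph{not} the $L^{2}$-adjoint of $\ov{\partial}_{\nu}$ and $D^{0,p}_{\nu}$ is genuinely non-self-adjoint for the fixed $L^{2}$ structure; this is precisely why Kato's framework, rather than the spectral theorem, is needed.

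For part (1), I would proceed as follows. The principal symbol of $D^{0,p}_{\nu}$ coincides with that of the scalar Laplacian, so each $D^{0,p}_{\nu}$ is second-order elliptic; by elliptic regularity its $L^{2}$-realization with core $A^{0,p}(X)$ is closed with domain the Sobolev space $H^{2}(X,\Lambda^{0,p})$, independent of $\nu$. Concretely, $P_{\nu}$ is relatively bounded with respect to $D^{0,p}_{0}$ with relative bound $0$ (being of strictly lower order), so closedness on the common domain is stable under the perturbation. Holomorphy in the sense of type (A) is then immediate: for fixed $u$ in the domain, $\nu\mapsto D^{0,p}_{\nu}u=D^{0,p}_{0}u+P_{\nu}u$ is a polynomial, hence entire, map from the finite-dimensional space of harmonic representatives into $L^{2}$. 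Thus the family is of type (A) in the sense of \cite[Chap.\ VII, Sec.\ 2]{Kato}.

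For part (2), I would combine ellipticity with a numerical-range estimate. Compactness of the resolvent, wherever it exists, follows because $(D^{0,p}_{\nu}-z)^{-1}$ maps $L^{2}$ boundedly into the domain $H^{2}$, and $H^{2}\hookrightarrow L^{2}$ is compact on the closed surface $X$. To locate the spectrum, take $u$ with $\|u\|=1$ in the domain and set $t=\langle D^{0,p}_{0}u,u\rangle=\|(\ov{\partial}+\ov{\partial}^{\ast})u\|^{2}\geq 0$. Then $\langle D^{0,p}_{\nu}u,u\rangle=t+\langle P_{\nu}u,u\rangle$, and since $P_{\nu}$ is first order the elliptic (G\r{a}rding) estimate $\|u\|_{H^{1}}^{2}\leq C(t+1)$ yields $|\langle P_{\nu}u,u\rangle|\leq C(t^{1/2}+1)$. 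Hence the numerical range lies in the horizontal parabolic region $\{x+iy:\ x\geq -C,\ y^{2}\leq C'(x+1)\}$. For $z$ to the left of this parabola one has the lower bound $\|(D^{0,p}_{\nu}-z)u\|\geq \dist(z,\overline{W})\,\|u\|$ (with $\overline{W}$ the closed numerical range), giving injectivity with closed range; as $D^{0,p}_{\nu}-z$ is Fredholm of index $0$ (its symbol is that of the self-adjoint $D^{0,p}_{0}-z$, to which it is homotopic through elliptic operators), it is invertible. Therefore the resolvent set is nonempty, the resolvent is compact, and the spectrum is confined to the stated parabola and in particular bounded below.

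The step I expect to be the main obstacle is not any single estimate but the bookkeeping around non-self-adjointness: one must verify carefully that conjugation by the non-unitary weight $G_{\nu}$ produces exactly a relatively-bounded-with-bound-$0$ perturbation of $D^{0,p}_{0}$ whose coefficients depend polynomially (hence holomorphically) on $\nu$, so that the self-adjoint reference operator controls the whole family uniformly. Once this decomposition $D^{0,p}_{\nu}=D^{0,p}_{0}+P_{\nu}$ is in hand, both the type-(A) property and the parabolic spectral enclosure reduce to standard elliptic theory on a compact surface combined with Kato's perturbation results.
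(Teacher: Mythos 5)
Your proof is correct, and for part (1) it follows essentially the same route as the paper: same principal symbol as the self-adjoint Laplacian, hence ellipticity and closedness, and holomorphy read off from the polynomial dependence of the coefficients on the coordinates of $\nu$ in a basis of harmonic forms. (You are in fact slightly more careful than the paper on one point: the closed $L^{2}$-realization has domain $H^{2}$ rather than the space of smooth forms, which is only a core.) Where you genuinely diverge is part (2). The paper does not argue this directly: it invokes Fay's explicit construction of the Green kernel for $(\Delta^{0,p}_{\chi}-s(1-s))^{-1}$ for the compactness of the resolvent (noting in a remark that, for a type (A) family, compactness at the single point $\nu=0$ already propagates to all $\nu$ by Kato's Theorem VII.2.4), and it attributes the parabolic confinement of the spectrum to Cappell--Miller's Lemma 4.1. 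Your replacement --- the decomposition $D^{0,p}_{\nu}=D^{0,p}_{0}+P_{\nu}$ with $P_{\nu}$ of order $\leq 1$, a numerical-range estimate via G\aa rding's inequality to trap the spectrum in a horizontal parabola, and invertibility off the numerical range from the index-zero Fredholm property --- is a standard and self-contained argument that makes the lemma independent of both external references; what it costs is the extra bookkeeping you flag around the non-unitary conjugation, which the paper avoids by delegating precisely those computations to Fay and Cappell--Miller.
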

\begin{proof}
For the first item, we notice that the $D_{\nu}^{0,p}$ are differential operators of order $2$ and share the same principal symbol with $D^{0,p}$, hence they are elliptic since the latter is. This also implies that the $D_{\nu}^{0,p}$ are closed as unbounded operators acting on $A^{0,p}(X)$ and with respect to the $L^{2}$ structure. We have thus checked the first condition in Kato's definition. For the holomorphicity, introduce a basis of holomorphic differentials $\lbrace\omega_{i}\rbrace$ of $X$ and write
\begin{displaymath}
	\nu=\sum_{i}(s_{i}\omega_{i}+t_{i}\ov{\omega}_{i}).
\end{displaymath}
The holomorphic dependence on $\nu$ amounts to the holomorphic dependence on the parameter $s_{i},t_{j}$, which is obvious from the construction of $D_{\nu}^{(0,p)}$ by conjugation by $G_{\nu}$: given $\theta\in A^{0,p}(X)$, the differential form $D_{\nu}^{(0,p)}\theta$ is holomorphic in the parameters $s_{i},t_{i}$. This establishes the second condition, so the first claim.

For the compact resolvent property, this is done in \cite{Fay} (especially p.\ 111), where Fay explicitly constructs the Green kernel for $(\Delta^{0,p}_{\chi}-s(1-s))^{-1}$ (see also the remark below). The spectrum assertion is an observation of Cappell-Miller \cite[Lemma 4.1]{CM}.
\end{proof}
\begin{remark}
Actually, for holomorphic families of type (A) in a parameter $\chi$ on a domain, compactness of the resolvent for all $\chi$ follows from the compactness of the resolvent at a given $\chi_{0}$ \cite[Thm.\ 2.4]{Kato}. Therefore, the compactness asserted by the lemma is automatic from the classical compactness in the unitary and self-adjoint case, for instance when $\nu=0$. 
\end{remark}

We now look at a given $\chi_{0}\in M_{B}(X)$. Let $b>0$ not in the spectrum of $\Delta^{0,p}_{\chi_{0}}$. By the Lemma \ref{lemma:family-A} and \cite[Chap.\ VII, Thm.\ 1.7]{Kato}, there exists a neighborhood $U_{\chi_{0}}$ of $\chi_{0}$ such that the same property still holds for $\Delta^{0,p}_{\chi}$, if $\chi\in U_{\chi_{0}}$. Hence, the set $U_{b}$ of those $\chi\in M_{B}(X)$ such that $b$ is not the real part of any generalized eigenvalue of $\Delta^{0,p}_{\chi}$, forms an open set. Because $b>0$, it is easy to see that this open set does not depend on whether we work with $\Delta^{0,0}_{\chi}$ or $\Delta^{0,1}_{\chi}$: it is the same for both. Such open subsets $U_{b}$ form an open cover of $M_{B}(X)$. We define $\mathscr{V}^{0,p}_{b,\chi}\subset A^{0,p}(\Lcal_{\chi})$ the subspace spanned by generalized eigenfunctions of $\Delta_{\chi}^{0,p}$, of generalized eigenvalue $\lambda$ with $\Real(\lambda)<b$. If $c>b>0$ are not the real parts of the eigenvalues at some $\chi_{0}$, we can similarly introduce $\mathscr{V}_{(b,c),\chi}^{0,p}$ on $U_{b}\cap U_{c}$, by consideration of generalized eigenfunctions with eigenvalues whose real part is in the open interval $(b,c)$.
\begin{proposition}
For $\chi\in U_{b}$ (resp. $U_{b}\cap U_{c}$), the vector spaces $\mathscr{V}^{0,p}_{b,\chi}$ (resp. $\mathscr{V}_{(b,c),\chi}^{0,p}$) define a holomorphic vector bundle on $U_{b}$ (resp. $U_{b}\cap U_{c}$) with locally finite ranks.
\end{proposition}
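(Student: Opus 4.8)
The plan is to realize each fiber $\mathscr{V}^{0,p}_{b,\chi}$ as the image of a Riesz spectral projection depending holomorphically on $\chi$, and then to manufacture local holomorphic frames by applying these projections to a fixed basis. First I would fix $\chi_{0}\in U_{b}$ and choose a bounded closed contour $\Gamma\subset\CBbb$ lying in the resolvent set of $\Delta^{0,p}_{\chi_{0}}$ and enclosing precisely the generalized eigenvalues $\lambda$ with $\real(\lambda)<b$. Such a contour exists because the spectrum is discrete (compact resolvent), bounded below, and confined to a horizontal parabola, so that only finitely many eigenvalues satisfy $\real(\lambda)<b$; in particular each $\mathscr{V}^{0,p}_{b,\chi}$ is finite dimensional. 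By Lemma \ref{lemma:family-A} the $\Delta^{0,p}_{\chi}$ form a holomorphic family of type (A) in the sense of Kato, so the resolvent $(\lambda-\Delta^{0,p}_{\chi})^{-1}$ is jointly holomorphic in $(\chi,\lambda)$ on its domain of definition, and by \cite[Chap.\ VII, Thm.\ 1.7]{Kato} the contour $\Gamma$ remains in the resolvent set of $\Delta^{0,p}_{\chi}$ for all $\chi$ in a neighborhood $U_{\chi_{0}}\subseteq U_{b}$.

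On $U_{\chi_{0}}$ I would then set
\begin{displaymath}
	P_{\chi}=\frac{1}{2\pi i}\oint_{\Gamma}(\lambda-\Delta^{0,p}_{\chi})^{-1}\,d\lambda,
\end{displaymath}
which is the spectral projection onto $\mathscr{V}^{0,p}_{b,\chi}$ and which, by the joint holomorphicity of the resolvent and compactness of $\Gamma$, depends holomorphically on $\chi$ as a map into the bounded operators on the fixed $L^{2}$ space (operator-norm topology). Its rank equals $\dim\mathscr{V}^{0,p}_{b,\chi}$ and, being an integer that varies continuously, is constant on $U_{\chi_{0}}$; this yields the \emph{locally finite rank} assertion. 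To extract the bundle structure, I would choose a basis $e_{1},\dots,e_{r}$ of the fiber $\mathscr{V}^{0,p}_{b,\chi_{0}}$ and consider the sections $s_{j}(\chi):=P_{\chi}e_{j}$, which are holomorphic $L^{2}$-valued on $U_{\chi_{0}}$ with $s_{j}(\chi_{0})=e_{j}$. By continuity they stay linearly independent on a possibly smaller neighborhood, and since $\dim\operatorname{im}P_{\chi}=r$ is constant they span $\mathscr{V}^{0,p}_{b,\chi}$ there; elliptic regularity places each $s_{j}(\chi)$ in $A^{0,p}(X)$. These are the desired local holomorphic frames, exhibiting the $\mathscr{V}^{0,p}_{b,\chi}$ as a holomorphic subbundle of the trivial $L^{2}$-bundle over $U_{b}$; overlapping frames of this type differ by matrices of holomorphic functions. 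The same construction, now with $\Gamma$ enclosing the eigenvalues having $\real(\lambda)\in(b,c)$, produces the bundles $\mathscr{V}^{0,p}_{(b,c),\chi}$ over $U_{b}\cap U_{c}$.

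The main obstacle is precisely the holomorphic dependence of the spectral projection $P_{\chi}$, and this is where Kato's theory of holomorphic families of type (A) is indispensable. Because the operators $\Delta^{0,p}_{\chi}$ are not self-adjoint, the self-adjoint functional calculus used by Bismut--Gillet--Soul\'e in \cite{BGS3} is unavailable; instead the joint holomorphicity of the resolvent and the stability of the resolvent set under perturbation—which together legitimize differentiating under the contour integral and guarantee that $\Gamma$ stays free of spectrum on a full neighborhood—must be drawn from Kato's perturbation results. Once these are in place, the finiteness coming from the parabola-confinement of the spectrum and the standard frame argument are routine.
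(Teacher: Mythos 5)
Your argument is correct and is essentially the paper's: the paper simply cites Kato \cite[Chap.\ VII, Thm.\ 1.7]{Kato} in view of Lemma \ref{lemma:family-A}, remarking that the proof there proceeds by spectral projectors, and your Riesz-projection contour integral, constant-rank observation, and frame construction $s_j(\chi)=P_\chi e_j$ are precisely that argument written out. The only point worth making explicit is that the parabola confinement of the spectrum (locally uniformly in $\chi$) is what guarantees a single bounded contour $\Gamma$ continues to enclose \emph{all} eigenvalues with $\real(\lambda)<b$ as $\chi$ varies near $\chi_0$, which you use implicitly.
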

\begin{proof} In view of Lemma \ref{lemma:family-A}, this is a reformulation of \cite[Chap.\ VII, Thm.\ 1.7]{Kato}. The proof of \emph{loc. cit.} provides an illuminating construction by means of spectral projectors.
\end{proof}
We denote by
\begin{displaymath}
	\mathscr{V}^{0,p}_{b}=\mathscr{V}^{0,p}_{b}(\Lcal)\subset \pi_{\ast}\Acal^{0,p}_{\Xcal/M_{B}(X)}(\Lcal)\bigr|_{U_{b}}
\end{displaymath}
the holomorphic bundle on $U_{b}$ thus defined. The differential on the Dolbeault complex $\pi_{\ast}\Dcal_{\Xcal/M_{B}(X)}$ induces a differential on $\mathscr{V}_{b}^{0,p}$, and $\ov{\partial}_{X}(\mathscr{V}^{0,0}_{b})\subset\mathscr{V}^{0,1}_{b}$. Indeed, the relative $\ov{\partial}$ operator of $\Lcal$ commutes with the operators $\Delta^{0,p}_{\chi}$. We introduce similar notation for eigenspaces with real parts in $(b,c)$.
\begin{proposition} $\hbox{}$
\begin{enumerate}	
	\item The inclusion of complexes
	\begin{equation}\label{eq:inclusion-complexes}
		(\mathscr{V}^{0,\bullet}_{b}\otimes\Ccal^{\infty}_{U_b},\ov{\partial}_{X})\hookrightarrow \pi_{\ast}\Dcal_{\Xcal/M_{B}(X)}\bigr|_{U_{b}}
	\end{equation}
	is a quasi-isomorphism. Therefore, the complex $\mathscr{V}^{0,\bullet}_{b}\otimes\Ccal^{\infty}_{U_{b}}$ computes $\Hcal^{0,p}_{\ov{\partial}_{X}}(\Lcal)$ restricted to $U_{b}$.
	\item The cohomology sheaves of $\mathscr{V}^{0,\bullet}_{b}$ have natural structures of coherent sheaves on $U_{b}$, compatible with the holomorphic structures of Bismut-Gillet-Soul\'e on $\Hcal^{0,p}_{\ov{\partial}_{X}}(\Lcal)$. Therefore, the complex $\mathscr{V}^{0,\bullet}_{b}$ computes $R\pi_{\ast}(\Lcal)$ restricted to $U_{b}$.
	\item The complex $\mathscr{V}^{0,\bullet}_{(b,c)}$ is acyclic.
\end{enumerate}
\end{proposition}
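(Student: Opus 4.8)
The plan is to deduce all three statements from a single mechanism: the fiberwise generalized-eigenspace decomposition of the Laplacians $\Delta^{0,p}_{\chi}$, made holomorphic in $\chi$ by Kato's theory. The cornerstone is the identity $\Delta^{0,p}=\ov{\partial}_{X}\ov{\partial}^{\sharp}+\ov{\partial}^{\sharp}\ov{\partial}_{X}$ together with $\ov{\partial}_{X}^{2}=(\ov{\partial}^{\sharp})^{2}=0$, which gives $[\ov{\partial}_{X},\Delta^{0,p}]=0$, as already recorded before the statement. Consequently the Riesz spectral projectors $P_{b,\chi}=\frac{1}{2\pi i}\oint_{\Gamma}(\zeta-\Delta_{\chi})^{-1}\,d\zeta$, where $\Gamma$ encircles the generalized eigenvalues with real part $<b$, commute with $\ov{\partial}_{X}$. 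By Lemma \ref{lemma:family-A} and \cite[Chap.\ VII, Thm.\ 1.7]{Kato} the family $P_{b,\chi}$ is holomorphic in $\chi$ on $U_{b}$, so over $U_{b}$ the relative Dolbeault complex splits as a direct sum of subcomplexes, $\pi_{\ast}\Dcal_{\Xcal/M_{B}(X)}\bigr|_{U_{b}}=(\mathscr{V}^{0,\bullet}_{b}\otimes\Ccal^{\infty}_{U_{b}})\oplus\mathscr{W}^{0,\bullet}_{b}$, where $\mathscr{W}^{0,p}_{b}$ is the closed $\ov{\partial}_{X}$-invariant complement spanned by generalized eigenfunctions with real part of the eigenvalue exceeding $b$.

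With this splitting, items (i) and (iii) reduce to acyclicity. On $\mathscr{W}^{0,\bullet}_{b}$ (respectively on the finite-rank $\mathscr{V}^{0,\bullet}_{(b,c)}$, where all eigenvalues have real part in $(b,c)$) the operator $\Delta^{0,\bullet}$ is invertible, precisely because $b>0$ keeps $0$ out of the relevant part of the spectrum; let $G$ denote its inverse there, which is holomorphic in $\chi$ by Kato. Since $G$ commutes with $\ov{\partial}_{X}$, the operator $h:=\ov{\partial}^{\sharp}G$ satisfies $\ov{\partial}_{X}h+h\ov{\partial}_{X}=\Delta G=\mathrm{id}$, using that on a curve $\ov{\partial}^{\sharp}$ vanishes on $\Acal^{0,0}$ and $\ov{\partial}_{X}$ vanishes on $\Acal^{0,1}$. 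Thus $h$ is a contracting homotopy, and because it varies holomorphically it is a morphism of the corresponding complexes of sheaves; hence $\mathscr{W}^{0,\bullet}_{b}$ and $\mathscr{V}^{0,\bullet}_{(b,c)}$ are acyclic. This is exactly item (iii), and combined with the splitting it shows the inclusion \eqref{eq:inclusion-complexes} is a quasi-isomorphism, proving item (i).

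For item (ii) I would first note that $\mathscr{V}^{0,\bullet}_{b}$ is a two-term complex of \emph{holomorphic} bundles whose differential $\ov{\partial}_{X}$ is holomorphic: after conjugation by $G_{\nu}$ it becomes $\ov{\partial}-\nu''$, which depends holomorphically on $\nu$ by Lemma \ref{lemma:family-A} and preserves $\mathscr{V}^{0,\bullet}_{b}$ by commutation with the projectors. Therefore $\ker\ov{\partial}_{X}$ and $\coker\ov{\partial}_{X}$ are coherent analytic sheaves on $U_{b}$. To identify them with $R^{p}\pi_{\ast}(\Lcal)\bigr|_{U_{b}}$, I would invoke the faithful flatness of $\Ccal^{\infty}_{U_{b}}$ over $\Ocal_{U_{b}}$ (Malgrange), so that $\otimes_{\Ocal}\Ccal^{\infty}$ commutes with taking cohomology; combined with item (i) this yields $\mathscr{H}^{p}(\mathscr{V}^{0,\bullet}_{b})\otimes_{\Ocal}\Ccal^{\infty}=\Hcal^{0,p}_{\ov{\partial}_{X}}(\Lcal)\bigr|_{U_{b}}$. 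By \cite[Prop.\ 3.10]{BGS3} the holomorphic structure of Bismut-Gillet-Soul\'e on the right-hand side agrees with $R^{p}\pi_{\ast}(\Lcal)\otimes\Ccal^{\infty}$, and faithful flatness lets one descend this identification from the $\Ccal^{\infty}$-level back to coherent sheaves, giving $\mathscr{H}^{p}(\mathscr{V}^{0,\bullet}_{b})=R^{p}\pi_{\ast}(\Lcal)\bigr|_{U_{b}}$.

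The main obstacle is the compatibility asserted in item (ii): because the $\Delta^{0,p}_{\chi}$ are not self-adjoint, the orthogonal Hodge projections used in \cite{BGS3} must be replaced by Kato's non-orthogonal Riesz projectors onto \emph{generalized} eigenspaces, and one must verify both that these assemble into holomorphic subbundles on which $\ov{\partial}_{X}$ acts holomorphically and that the resolvent---hence $G$ and $h$---depends holomorphically on $\chi$. This is precisely the point at which the self-adjoint reasoning of \cite{BGS3} fails to apply directly and where the hypotheses of Lemma \ref{lemma:family-A} (a holomorphic family of type (A) with compact resolvent and spectrum in a horizontal parabola) are indispensable.
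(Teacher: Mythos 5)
Your treatment of items (i) and (iii) is correct and takes a more self-contained route than the paper: where the paper combines the perfectness of the relative Dolbeault complex as a complex of $\Ccal^{\infty}$-modules (from Bismut--Gillet--Soul\'e), the \emph{fiberwise} quasi-isomorphism established by Cappell--Miller, and the projectivity of $\mathscr{V}^{0,\bullet}_{b}\otimes\Ccal^{\infty}_{U_{b}}$ as sheaves of $\Ccal^{\infty}_{U_{b}}$-modules, you instead split off the complementary subcomplex with the Riesz projector and contract it explicitly by $h=\ov{\partial}^{\sharp}G$. The algebra $\ov{\partial}_{X}h+h\ov{\partial}_{X}=\Delta G=\mathrm{id}$ is right on a curve, the projectors do commute with $\ov{\partial}_{X}$ since $\ov{\partial}_{X}$ commutes with $\Delta^{0,\bullet}$, and Kato supplies the holomorphic dependence of the resolvent; this buys an independent proof of the Cappell--Miller fiberwise statement and disposes of (iii) in the same stroke.

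Item (ii), however, has a genuine gap at exactly the point you flag as ``the main obstacle.'' Faithful flatness of $\Ccal^{\infty}_{U_{b}}$ over $\Ocal_{U_{b}}$ does give an isomorphism of sheaves of $\Ccal^{\infty}_{U_{b}}$-modules between $\mathscr{H}^{p}(\mathscr{V}^{0,\bullet}_{b})\otimes\Ccal^{\infty}_{U_{b}}$ and $R^{p}\pi_{\ast}(\Lcal)\otimes\Ccal^{\infty}_{U_{b}}$, but an isomorphism between the smooth extensions of two coherent sheaves does not ``descend'' to an isomorphism of the coherent sheaves unless one verifies that it carries one holomorphic structure to the other; two non-isomorphic holomorphic bundles that are smoothly isomorphic are the standard counterexample. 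What must actually be checked is that the holomorphic structure which $\mathscr{V}^{0,\bullet}_{b}$ inherits from Kato's spectral projectors induces, on cohomology, the Bismut--Gillet--Soul\'e structure on $\Hcal^{0,p}_{\ov{\partial}_{X}}(\Lcal)$. The paper supplies this by a short but essential observation: a local holomorphic section $\theta$ of $\mathscr{V}^{0,p}_{b}$ with $\ov{\partial}_{X}\theta=0$ can, because $\Xcal=X\times M_{B}(X)$ is a product and by the construction of the universal bundle $\Lcal$, be regarded as a $(0,p)$-form on $\Xcal$ satisfying $\ov{\partial}_{\Lcal}\theta=0$ for the total Dolbeault operator, and this is precisely the condition defining the Bismut--Gillet--Soul\'e holomorphic structure. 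Without this step, or an equivalent one, the identification of $\mathscr{H}^{p}(\mathscr{V}^{0,\bullet}_{b})$ with $R^{p}\pi_{\ast}(\Lcal)\bigr|_{U_{b}}$ as coherent sheaves is not established, and the isomorphism $\beta_{b}$ used later in the construction of the Cappell--Miller trivialization is not defined.
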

\begin{proof}
First, by \cite[Lemma 3.8]{BGS3} we know that the relative Dolbeault complex is perfect as a complex of $\Ccal^{\infty}_{M_{B}(X)}$-modules, and its cohomology is bounded and finitely generated. Second, Cappell-Miller show that \eqref{eq:inclusion-complexes} is fiberwise a quasi-isomorphism \cite[top of p.\ 151]{CM}. Finally, the $\mathscr{V}^{0,\bullet}_{b}\otimes\Ccal_{U_{b}}^{\infty}$ are vector bundles, hence projective objects in the category of sheaves of $\Ccal_{U_{b}}^{\infty}$-modules. The three assertions together are enough to conclude the first assertion.

That the cohomology of $\mathscr{V}^{0,\bullet}_{b}$ is formed by coherent sheaves is immediate, being the cohomology sheaves of a complex of finite rank holomorphic vector bundles. For the compatibility of holomorphic structures, taking into account the construction of Bismut-Gillet-Soul\'e, it is enough to observe the following. Assume $\theta$ is a local holomorphic section of $\mathscr{V}^{0,p}_{b}$. Hence, it depends holomorphically on $\chi$ and $\ov{\partial}_{X}\theta=0$. Because $\Xcal=X\times M_{B}(X)$ is a product, we can assume that $\theta$ is a global $(0,p)$ form, with $\ov{\partial}_{X}\theta=0$ and depending holomorphically on $\chi$. By the very construction of the universal bundle $\Lcal$, this is tantamount to saying $\ov{\partial}_{\Lcal}\theta=0$. Here $\ov{\partial}_{\Lcal}$ is the Dolbeault operator of $\Lcal$ on $\Xcal$.
But now $\ov{\partial}_{\Lcal}\theta=0$ is exactly the condition defining the holomorphic structure of Bismut-Gillet-Soul\'e \cite[p.\ 346]{BGS3} in our case.

The last assertion is left as an easy exercise.
\end{proof}
Let us graphically summarize the proposition with a diagram:
	\begin{equation}\label{eq:diagr-compl-str}
		\xymatrix{
			&	&\Hcal^{p}(\mathscr{V}_{b}^{0,\bullet},\ov{\partial}_{X})\otimes\Ccal_{U_{b}}^{\infty}\ar[d]_{\alpha_{p,b}}^{\begin{sideways}$\sim$\end{sideways}}\ar@{-->}[ld]_{\quad\beta_{p,b}}
			\\
			&R^{p}\pi_{\ast}(\Lcal)\otimes \Ccal_{U_{b}}^{\infty}\ar[r]_{\rho_{p}}^{\hbox{$\sim$}}&\Hcal^{0,p}_{\ov{\partial}_{X}}(\Lcal)\mid_{U_{b}}.
		}
	\end{equation}
	The complex structures on $\Hcal^{0,p}_{\ov{\partial}_{X}}(\Lcal)\bigr|_{U_{b}}$ induced by $\rho_{p}$ and $\alpha_{p,b}$ are compatible by the proposition, and hence $\beta_{p,b}$ is induced by an isomorphism of coherent sheaves. There are corresponding arrows between determinants of cohomologies, that we indicate $\rho$, $\alpha_{b}$ and $\beta_{b}$. In particular, by an abuse of notation the isomorphism $\beta_{b}$ can be identified with an isomorphism of holomorphic line bundles
\begin{displaymath}
	\beta_{b}\colon \det(\mathscr{V}_{b}^{0,\bullet})\isorightarrow\det R\pi_{\ast}(\Lcal)\mid_{U_b}.
\end{displaymath}
Here, we used the canonical isomorphism between the determinant of cohomology of $\mathscr{V}_{b}^{0,\bullet}$ and the determinant of its cohomology. A parallel digression applies to $\Lcal^{c}$, and we use the index $c$ for the corresponding objects. There is also a variant that applies to $\Lcal\otimes\omega_{X}$ and $\Lcal^{c}\otimes\omega_{\ov{X}}$, where we incorporate the Chern connections on $\omega_{X}$ and $\omega_{\ov{X}}$, with respect to the fixed hermitian metric. We leave the details to the reader. We introduce the notations $\mathscr{V}^{0,p}_{b}(\Lcal\otimes\omega_{X})$, etc. when confusions can arise. We now have a fundamental duality phenomenon.
\begin{proposition}
The operator $\ov{\partial}_{X}^{\sharp}$ induces a homological complex of holomorphic vector bundles on $U_{b}$
\begin{displaymath}
	\mathscr{V}^{0,1}_{b}(\Lcal)
	\stackrel{\ov{\partial}^{\sharp}_{X}}{\xrightarrow{\hspace*{.75cm}}}
	\mathscr{V}^{0,0}_{b}(\Lcal).
\end{displaymath}
This complex is $\Ocal_{U_{b}}$-isomorphic (\emph{i.e.} holomorphically) to the cohomological complex
\begin{displaymath}
	\mathscr{V}^{0,0}_{b}((\Lcal^{c})^{\vee}\otimes\omega_{\ov{X}})
	\stackrel{\ov{\partial}^{\sharp}_{\ov X}}{\xrightarrow{\hspace*{.75cm}}}
	\mathscr{V}^{0,1}_{b}((\Lcal^{c})^{\vee}\otimes\omega_{\ov{X}}).
\end{displaymath}
Therefore, there is a canonical isomorphism of holomorphic line bundles
\begin{displaymath}
	\det(\mathscr{V}^{0,\bullet}_{b})
	\stackrel{\beta^{c}_{b}}{\xrightarrow{\hspace*{.75cm}}}
	\det R\pi_{\ast}^{c}((\Lcal^{c})^{\vee}\otimes\omega_{\ov{X}})^{\vee}.
\end{displaymath}
\end{proposition}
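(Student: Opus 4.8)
The plan is to realize this proposition as Serre duality carried out at the level of the finite-dimensional spectral complexes $\mathscr{V}^{0,\bullet}_{b}$, with the conjugate surface $\ov{X}$ playing the role of the complex conjugation hidden in the Hodge-theoretic adjoint. I would proceed in three steps.

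First I would verify that $\ov{\partial}_{X}^{\sharp}$ genuinely defines a complex on the spectral subspaces. On the two-term complex one has $\Delta^{0,0}_{\chi}=\ov{\partial}_{X}^{\sharp}\ov{\partial}_{X}$ and $\Delta^{0,1}_{\chi}=\ov{\partial}_{X}\ov{\partial}_{X}^{\sharp}$, whence $\ov{\partial}_{X}^{\sharp}\,\Delta^{0,1}_{\chi}=\Delta^{0,0}_{\chi}\,\ov{\partial}_{X}^{\sharp}$. Thus $\ov{\partial}_{X}^{\sharp}$ intertwines the two Laplacians, commutes with the spectral projectors onto generalized eigenspaces with $\Real(\lambda)<b$, and therefore maps $\mathscr{V}^{0,1}_{b}(\Lcal)$ into $\mathscr{V}^{0,0}_{b}(\Lcal)$; holomorphicity in $\chi$ follows from the holomorphy of the projectors already established via Lemma~\ref{lemma:family-A}. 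This produces the homological complex.

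The heart of the argument is the construction of the $\Ocal_{U_b}$-isomorphism with the Serre-dual complex. Fibrewise over $\chi$, the formal adjoint $\ov{\partial}^{\ast}$ on $A^{0,1}(\widetilde{X},\chi)$ is, up to the fixed Hodge $\ast$ of $h_{T_X}$, a $\partial$-type operator on the \emph{dual} local system, of holonomy $\chi^{-1}$. On a Riemann surface $\ast$ preserves the $(1,0)$/$(0,1)$ type, so the passage from a $(1,0)$- to a $(0,1)$-operator is effected precisely by the complex conjugation built into the adjoint; reinterpreting that conjugation as reversal of the complex structure lands one on $\ov{X}$ and on the Serre-dual bundle $(\Lcal^{c})^{\vee}\otimes\omega_{\ov{X}}$. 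Concretely, using the presentation by conjugation with $G_{\nu}$ I would write $\ov{\partial}^{\ast}_{\nu}$ explicitly and check that the assignment sending a $\chi$-equivariant $(0,1)$-form on $\widetilde{X}$ to the corresponding section of $(\Lcal^{c})^{\vee}\otimes\omega_{\ov{X}}$ on $\widetilde{\ov{X}}$ carries $\ov{\partial}_{X}^{\sharp}$ to the operator $\ov{\partial}^{\sharp}_{\ov X}$. The essential point to verify is that this identification varies \emph{holomorphically} in $\chi$: although $\ast$ and conjugation are antiholomorphic on $X$ itself, the target datum $(\Lcal^{c},\nabla^{c})$ has holonomy $\chi^{-1}$, which is holomorphic in $\chi$, so after transport to $\ov{X}$ the whole family is holomorphic in the parameters $s_{i},t_{i}$ of Lemma~\ref{lemma:family-A}. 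Since the two Laplacians correspond, the spectral subspaces match, giving the asserted holomorphic isomorphism of two-term complexes.

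Finally I would pass to determinants. The displayed identification is \emph{degree-reversing}: it sends $\mathscr{V}^{0,1}_{b}(\Lcal)$ to $\mathscr{V}^{0,0}_{b}(D)$ and $\mathscr{V}^{0,0}_{b}(\Lcal)$ to $\mathscr{V}^{0,1}_{b}(D)$, where $D=(\Lcal^{c})^{\vee}\otimes\omega_{\ov{X}}$. Hence on determinant lines
\begin{displaymath}
\det(\mathscr{V}^{0,\bullet}_{b})=\det\mathscr{V}^{0,0}_{b}(\Lcal)\otimes(\det\mathscr{V}^{0,1}_{b}(\Lcal))^{-1}\isorightarrow(\det\mathscr{V}^{0,0}_{b}(D)\otimes(\det\mathscr{V}^{0,1}_{b}(D))^{-1})^{-1},
\end{displaymath}
and the right-hand side equals $(\det R\pi^{c}_{\ast}(D))^{-1}=\det R\pi^{c}_{\ast}(D)^{\vee}$ through the isomorphism $\beta_{b}$ already constructed for $D$ on $\ov{X}$. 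The resulting composite is the claimed $\beta^{c}_{b}$, and the reversal of degrees is exactly what accounts for the dual in the target. The main obstacle is the holomorphicity assertion in the middle step: the Hodge adjoint is manifestly antiholomorphic in $\chi$ on $X$, and only after reinterpreting the conjugation as passage to the conjugate surface $\ov{X}$ together with the conjugate local system $\Lcal^{c}$ of holonomy $\chi^{-1}$ does one recover holomorphic dependence. Controlling this rigorously is where Kato's analytic perturbation theory and the explicit $G_{\nu}$-conjugation do the real work; the remainder is formal bookkeeping of Serre duality and determinant conventions.
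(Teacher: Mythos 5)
Your proposal is correct and follows essentially the same route as the paper: the commutation of $\ov{\partial}_{X}^{\sharp}$ with $\Delta^{0,p}$ for the first assertion, the Cappell--Miller operator $\hat{\star}$ (Hodge star followed by conjugation, reinterpreted as passage to $\ov{X}$ and the Serre-dual bundle $(\Lcal^{c})^{\vee}\otimes\omega_{\ov{X}}$, with the key observation that this is complex linear and hence holomorphic in $\chi$) for the middle identification, and the duality between determinants of homological and cohomological complexes for the final isomorphism. Your write-up simply spells out in more detail what the paper's proof states tersely.
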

\begin{proof}
The first assertion follows because $\ov{\partial}_{X}^{\sharp}$ commutes with $\Delta^{0,p}$. The coincidence
\begin{displaymath}
	\mathscr{V}^{0,1}_{b}=\mathscr{V}^{0,0}_{b}((\Lcal^{c})^{\vee}\otimes\omega_{\ov{X}})
\end{displaymath}
as holomorphic vector bundles is easily seen. Notice the natural appearance of $(\Lcal^{c})^{\vee}$, which has same holonomy characters as $\Lcal$, but the opposite holomorphic structure fiberwise. Observe that the base point and the trivialization of the universal bundles at $p$ is implicit in the identification. Moreover, there is an isomorphism of holomorphic vector bundles given by the Hodge star operator followed by conjugation, that following \cite{CM} we write $\hat{\star}$:
\begin{displaymath}
	\hat{\star}\colon \mathscr{V}^{0,0}_{b}(\Lcal)\isorightarrow\mathscr{V}^{1,0}_{b}((\Lcal^{c})^{\vee}\otimes\omega_{\ov{X}}).
\end{displaymath}
Observe that $\hat{\star}$ is complex linear, and this is necessary if we want to preserve holomorphy. The compatibilities with the differentials are readily checked from the definitions. This concludes the first assertion. For the second, we just need to stress that the determinant of $\mathscr{V}^{0,\bullet}_{b}$ as a cohomological complex is dual to the determinant of $\mathscr{V}^{0,\bullet}_{b}$ as a homological complex.
\end{proof}
\begin{corollary}\label{cor:local-triv-CM}
$\hbox{}$
\enumerate
	\item There is a diagram of isomorphisms of holomorphic line bundles on $U_{b}$
\begin{displaymath}
	\xymatrix{
		\det(\mathscr{V}^{0,\bullet}_{b})\ar[r]^{id}\ar[d]_{\beta_{b}}	&\det(\mathscr{V}^{0,\bullet}_{b})\ar[d]^{\beta^{c}_{b}}\\
		\det R\pi_{\ast}(\Lcal)\ar[r]^{\hbox{$\sim$}\qquad\quad}	&\det R\pi_{\ast}^{c}((\Lcal^{c})^{\vee}\otimes\omega_{\ov{X}})^{\vee}.
	}
\end{displaymath}
	It induces a holomorphic trivialization $\tau(b)$ of $\det R\pi_{\ast}(\Lcal)\otimes \det R\pi_{\ast}^{c}(\Lcal^{c})$ on $U_{b}$.
	\item Let $c>b>0$. On $U_{b}\cap U_{c}$, the relation between $\tau(b)$ and $\tau(c)$ is given by
	\begin{displaymath}
		\tau(b)=\tau(c)\prod_{j=1}^{m}\det\Delta^{0,1}_{(b,c)},
	\end{displaymath}
	where $\Delta^{0,1}_{(b,c)}$ is the endomorphism of the holomorphic vector bundle $\mathscr{V}^{0,1}_{(b,c)}$ defined by the laplacians $\Delta^{0,1}_{\chi}$, $\chi\in U_{b}\cap U_{c}$.
\end{corollary}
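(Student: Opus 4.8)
The plan is to establish the two items in turn: first assemble the trivialization $\tau(b)$ out of the isomorphisms already produced, and then compare $\tau(b)$ with $\tau(c)$ by localizing the discrepancy on the acyclic complex $\mathscr V^{0,\bullet}_{(b,c)}$.

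\textbf{Construction of $\tau(b)$.} The square commutes for a trivial reason: its top arrow is the identity, so the bottom arrow is forced to be the composite $\beta^c_b\circ\beta_b^{-1}$, a holomorphic isomorphism
\[
	\det R\pi_{\ast}(\Lcal)|_{U_b}\isorightarrow\det R\pi_{\ast}^{c}((\Lcal^c)^{\vee}\otimes\omega_{\ov X})^{\vee}.
\]
To convert this into a trivialization of $\det R\pi_{\ast}(\Lcal)\otimes\det R\pi_{\ast}^{c}(\Lcal^c)$ I would invoke relative Serre duality along $\pi^c\colon\ov X\times M_B(X)\to M_B(X)$, which supplies a canonical, holomorphic, base-change compatible identification
\[
	\det R\pi_{\ast}^{c}((\Lcal^c)^{\vee}\otimes\omega_{\ov X})^{\vee}\cong\bigl(\det R\pi_{\ast}^{c}(\Lcal^c)\bigr)^{\vee},
\]
coming from the perfect fiberwise pairings $H^p(\ov X,(\Lcal^c)^{\vee}\otimes\omega_{\ov X})\otimes H^{1-p}(\ov X,\Lcal^c)\to\CBbb$. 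Composing, $\beta^c_b\circ\beta_b^{-1}$ becomes a holomorphic isomorphism $\det R\pi_{\ast}(\Lcal)\isorightarrow(\det R\pi_{\ast}^{c}(\Lcal^c))^{\vee}$, which is exactly the datum of a nowhere-vanishing holomorphic section $\tau(b)$ of $\det R\pi_{\ast}(\Lcal)\otimes\det R\pi_{\ast}^{c}(\Lcal^c)$. Holomorphicity of $\tau(b)$ is inherited from that of $\beta_b$, of $\beta^c_b$, and of Serre duality.

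\textbf{Comparison on $U_b\cap U_c$.} Over $U_b\cap U_c$ the spectral projectors yield holomorphic splittings $\mathscr V^{0,p}_b=\mathscr V^{0,p}_c\oplus\mathscr V^{0,p}_{(b,c)}$ for $p=0,1$, compatible with both $\ov\partial_X$ and $\ov\partial^{\sharp}_X$. Taking determinants gives $\det\mathscr V^{0,\bullet}_b=\det\mathscr V^{0,\bullet}_c\otimes\det\mathscr V^{0,\bullet}_{(b,c)}$, and under this factorization $\beta_b$ restricts to $\beta_c$ on the first factor tensored with the canonical trivialization of the second; likewise for the dual isomorphisms $\beta^c_b,\beta^c_c$. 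Hence the discrepancy $\tau(b)/\tau(c)$ is entirely carried by the acyclic middle complex $\mathscr V^{0,\bullet}_{(b,c)}=[\mathscr V^{0,0}_{(b,c)}\xrightarrow{\ov\partial_X}\mathscr V^{0,1}_{(b,c)}]$. The essential computation is that $\beta$ trivializes $\det\mathscr V^{0,\bullet}_{(b,c)}$ through the (invertible) cohomological differential $\ov\partial_X$, whereas $\beta^c$ trivializes the same line through the homological differential $\ov\partial^{\sharp}_X$; the two canonical trivializations differ exactly by $\det(\ov\partial_X\ov\partial^{\sharp}_X)=\det\Delta^{0,1}_{(b,c)}$ on $\mathscr V^{0,1}_{(b,c)}$ (equivalently $\det(\ov\partial^{\sharp}_X\ov\partial_X)=\det\Delta^{0,0}_{(b,c)}$ on $\mathscr V^{0,0}_{(b,c)}$, the two agreeing since $\ov\partial_X\ov\partial^{\sharp}_X$ and $\ov\partial^{\sharp}_X\ov\partial_X$ share their nonzero spectrum with multiplicities). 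Assembling the pieces gives $\tau(b)=\tau(c)\,\det\Delta^{0,1}_{(b,c)}$ on $U_b\cap U_c$.

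The step I expect to be the main obstacle is precisely this last bookkeeping: one must thread together the $\hat\star$-duality of the preceding proposition, relative Serre duality, and the passage from isomorphisms of line bundles to their inverse sections, with consistent sign and orientation conventions, so that the mismatch between the two trivializations of $\det\mathscr V^{0,\bullet}_{(b,c)}$ emerges as $\det\Delta^{0,1}_{(b,c)}$ rather than its inverse or a determinant in the wrong degree. The remaining verifications—holomorphy of the splittings, acyclicity of $\mathscr V^{0,\bullet}_{(b,c)}$, and compatibility of $\beta_b,\beta^c_b$ with $\beta_c,\beta^c_c$ on the $\mathscr V_c$ summand—are either already recorded in the preceding propositions or routine.
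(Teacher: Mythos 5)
Your item (i) is essentially the paper's own argument: the square is a tautology and the trivialization is obtained by composing $\beta^c_b\circ\beta_b^{-1}$ with the canonical relative Serre duality $\det R\pi_{\ast}^{c}((\Lcal^{c})^{\vee}\otimes\omega_{\ov{X}})\simeq\det R\pi_{\ast}^{c}(\Lcal^{c})$; the paper states exactly this and nothing more. For item (ii) you diverge from the paper in an interesting way: the paper checks the identity pointwise by citing Cappell--Miller's Eq.\ (3.6) and then globalizes it using only the remark that the determinant of a holomorphic endomorphism of a holomorphic vector bundle is a holomorphic function, whereas you attempt a self-contained derivation by splitting off the acyclic complex $\mathscr{V}^{0,\bullet}_{(b,c)}$ and comparing the two canonical trivializations of its determinant (via $\ov{\partial}_X$ and via $\ov{\partial}^{\sharp}_X$). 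That route is legitimate and more informative, and your observation that $\det\Delta^{0,0}_{(b,c)}=\det\Delta^{0,1}_{(b,c)}$ because the two operators are intertwined by the invertible differential is correct.

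Two points need repair. First, since $c>b$, the spectral inclusion goes the other way: $\mathscr{V}^{0,p}_{b}\subset\mathscr{V}^{0,p}_{c}$ and the holomorphic splitting is $\mathscr{V}^{0,p}_{c}=\mathscr{V}^{0,p}_{b}\oplus\mathscr{V}^{0,p}_{(b,c)}$, not $\mathscr{V}^{0,p}_{b}=\mathscr{V}^{0,p}_{c}\oplus\mathscr{V}^{0,p}_{(b,c)}$ as you wrote. This is not merely cosmetic: carried through naively it inverts the correction factor, turning $\tau(b)=\tau(c)\det\Delta^{0,1}_{(b,c)}$ into $\tau(c)=\tau(b)\det\Delta^{0,1}_{(b,c)}$. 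Second, you explicitly defer the decisive bookkeeping (which of $\det\Delta^{0,1}_{(b,c)}$ or its inverse appears, and in which degree), and since that is precisely the content of item (ii), the proof as written does not yet establish the stated formula; it establishes it up to a sign of the exponent. To close the gap you would either carry out the two-term acyclic-complex computation explicitly with the corrected decomposition (a short linear-algebra exercise: for an acyclic $[V^{0}\xrightarrow{d}V^{1}]$ the $\hat\star$-dual trivialization differs from the $d$-trivialization by $\det(dd^{\sharp})$ on $V^{1}$), or do as the paper does and quote the pointwise identity from Cappell--Miller, after which holomorphy of $\det\Delta^{0,1}_{(b,c)}$ upgrades it to an identity of holomorphic sections.
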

\begin{proof}
The first item is a reformulation of the proposition, together with the canonical Serre duality $\det R\pi_{\ast}((\Lcal^{c})^{\vee}\otimes\omega_{\ov{X}})\simeq\det R\pi_{\ast}(\Lcal^{c})$. For the second item, it is enough to check this equality pointwise and use that the determinant of a holomorphic bundle endomorphism is a holomorphic function. The pointwise relation follows from \cite[Eq.\ (3.6)]{CM}.
\end{proof}
\begin{remark}
The holomorphic function $\det\Delta^{0,1}_{(b,c)}$ is to be thought as a trivialization of the holomorphic line bundle $\det\Hcal^{\bullet}(\Vcal^{0,\bullet}_{(b,c)})$.
\end{remark}

For a given $\chi\in U_{b}$ and $b>0$, let us denote $P_{b}$ the spectral projector on generalized eigenfunctions of $\Delta^{0,1}_{\chi}$ of eigenvalues with real part $<b$. We put $Q_{b}=1-P_{b}$, and define the spectral zeta function of $Q_{b}\Delta^{0,1}_{\chi}$, as usual to be the Mellin transform of the heat operator $e^{-tQ_{b}\Delta^{0,1}_{\chi}}$. This depends on the auxiliary choice of an Agmon angle. Let this function be $\zeta_{b,\chi}(s)$. It is a meromorphic function on $\CBbb$, regular at $s=0$. The bases for these definitions and claims are due to Cappell-Miller, and rely on Seeley's methods \cite{Seeley}.  Furthermore, the special value $\exp(\zeta^{\prime}_{b,\chi}(0))$ does not depend on the choice of Agmon angle.

\begin{lemma}
The expression $\exp(\zeta^{\prime}_{b,\chi}(0))$ defines a holomorphic function in $\chi\in U_{b}$.
\end{lemma}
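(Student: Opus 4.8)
The plan is to realize $\exp(\zeta'_{b,\chi}(0))$ as a zeta-regularized determinant of the ``high-energy'' part $Q_b\Delta^{0,1}_\chi$ and to propagate holomorphy in $\chi$ through the heat-kernel Mellin transform. First I would record that, for $\Real s$ large,
\[
\zeta_{b,\chi}(s)=\frac{1}{\Gamma(s)}\int_0^\infty t^{s-1}\,\Tr\!\big(Q_{b}\,e^{-t\Delta^{0,1}_\chi}\big)\,dt,
\]
the integrand being the trace of the heat operator generated by $Q_b\Delta^{0,1}_\chi$ on the range of $Q_b$. This is legitimate because, by Lemma~\ref{lemma:family-A}, the spectrum of $\Delta^{0,1}_\chi$ lies in a horizontal parabola bounded below, so on the range of $Q_b$ it has real part $\ge b>0$; hence the operator is trace class and $\Tr(Q_b e^{-t\Delta^{0,1}_\chi})=O(e^{-bt})$ as $t\to\infty$, uniformly for $\chi$ in compact subsets of $U_b$.

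The key holomorphy input is that for each fixed $t>0$ the function $\chi\mapsto\Tr(Q_{b}e^{-t\Delta^{0,1}_\chi})$ is holomorphic on $U_b$. This follows from Kato's theory: since the $\Delta^{0,1}_\chi$ form a holomorphic family of type (A) with compact resolvent, the resolvent $(\lambda-\Delta^{0,1}_\chi)^{-1}$ is jointly holomorphic in $(\lambda,\chi)$ off the spectrum, and on $U_b$ one may choose a contour $\mathcal{C}$ enclosing exactly the spectral part with $\Real\lambda\ge b$, locally uniformly in $\chi$ --- this is precisely the spectral-stability statement of \cite[Chap.~VII, Thm.~1.7]{Kato} already invoked in the construction of $\mathscr{V}^{0,p}_b$. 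Writing both $Q_b$ and $Q_b e^{-t\Delta^{0,1}_\chi}$ as contour integrals of the resolvent and taking the (trace-class) trace then exhibits holomorphic dependence on $\chi$, with bounds locally uniform in $\chi$.

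It remains to pass this holomorphy through the Mellin transform and its continuation to $s=0$. I would split $\int_0^\infty=\int_0^1+\int_1^\infty$. By the uniform exponential decay above, the tail $\int_1^\infty$ is an entire function of $s$ depending holomorphically on $\chi$. For $\int_0^1$ I would invoke Seeley's short-time expansion $\Tr(e^{-t\Delta^{0,1}_\chi})\sim\sum_{k\ge0}a_k(\chi)\,t^{(k-2)/2}$, where each coefficient $a_k(\chi)$ is the integral over $X$ of a universal polynomial in the symbol of $\Delta^{0,1}_\chi$ and its jets; since that symbol is obtained by $G_\nu$-conjugation and so depends holomorphically on the parameters of Lemma~\ref{lemma:family-A}, each $a_k(\chi)$ is holomorphic on $U_b$ and the expansion is uniform on compacta. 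Subtracting the finite-rank, holomorphic term $\Tr(P_b e^{-t\Delta^{0,1}_\chi})$ changes nothing essential. This yields the meromorphic continuation of $\int_0^1$ near $s=0$ with value and $s$-derivative depending holomorphically on $\chi$. Combining with the tail and applying Vitali's theorem (or Morera, interchanging a $\chi$-loop integral with the $t$-integral, justified by the uniform bounds) shows $\zeta_{b,\chi}(s)$ is holomorphic in $\chi$ near $s=0$; hence so is $\zeta'_{b,\chi}(0)$, and therefore $\exp(\zeta'_{b,\chi}(0))$.

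The main obstacle is the third step: ensuring that Seeley's parametrix construction goes through \emph{holomorphically and locally uniformly} for the non-self-adjoint family $\Delta^{0,1}_\chi$, so that both the heat coefficients $a_k(\chi)$ and the remainder estimates are controlled holomorphically in $\chi$. The saving feature is that Seeley's construction is algebraic in the elliptic, holomorphically varying symbol and does not require self-adjointness --- it needs only that a resolvent contour can be chosen uniformly, which is exactly what the type-(A) structure and the horizontal-parabola spectral bound of Lemma~\ref{lemma:family-A} furnish.
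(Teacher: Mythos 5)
Your proposal is correct and follows essentially the same route as the paper: both arguments rest on the finite‑rank (hence entire) contribution of $P_b$, Kato's type‑(A) holomorphy of the resolvent, Seeley--Greiner short‑time heat expansions with holomorphically varying coefficients and locally uniform remainders for the conjugated family $D^{0,1}_\nu$, and a locally uniform $O(e^{-tb})$ large‑time bound obtained from Seeley's resolvent construction. The only cosmetic difference is that you run the Mellin transform and Vitali/Morera directly, whereas the paper concludes by citing the explicit expression for $\zeta'_{b,\chi}(0)$ from Cappell--Miller.
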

\begin{proof}
We adapt the proof of the smoothness property for unitary $\chi$, in the lines of Bismut-Freed \cite[Sec. g)]{Bismut-Freed-1}. Let us explain the main lines. The holomorphicity is a local property, and hence we can restrict to a small neighborhood $\Omega$ of a fixed $\chi_{0}\in U_{b}$, where a uniform choice of Agmon angle is possible. We then address the holomorphicity of $\zeta^{\prime}_{b,\chi}(0)$ for $\chi\in\Omega$, for this uniform choice of Agmon angle. 

First of all, the operators $\Delta^{0,1}_{\chi}$ define an endomorphism of the \emph{finite rank} holomorphic vector bundle $\mathscr{V}_{b}^{0,p}$ on $\Omega$. Hence, the operators $e^{-t P_{b} \Delta^{0,1}_{b}}$ are obviously trace class and 
\begin{displaymath}
	\tr(e^{-t P_{b} \Delta^{0,1}_{\chi}})
\end{displaymath}
is an entire function both in $t$ and $\chi$. Second, after possibly restricting $\Omega$, we can lift $\chi$ to harmonic representatives $\nu=\nu(\chi)$, depending holomorphically in $\chi$, as in the beginning of this section. Then the operators $\Delta_{\chi}^{0,1}$ are conjugate to the operators $D_{\nu}^{0,1}$ acting on $A^{0,1}(X)$, as in Lemma \ref{lemma:family-A}. These constitute a holomorphic family of differential operators of order 2. They differ from the fixed self-adjoint Dolbeault laplacian $D_{0}^{0,1}$ by differential operators of order 1. In particular, the theory of Seeley \cite{Seeley} and Greiner \cite[Sec.\ 1]{Greiner} applies. From the latter one sees there is an asymptotic expansion as $t\to 0$
\begin{displaymath}
	\tr(e^{-t D^{0,1}_{\nu}})=\sum_{k=0}^{N} t^{-1+k/2}a_{k}(\nu) + o(t^{-1+N/2}),
\end{displaymath}
where the $a_{k}(\nu)$ are holomorphic functions in $\nu$, and the remainder is uniform in $\nu$ (after possibly shrinking $\Omega$). Hence, one concludes with a similar property for $\tr(e^{-t Q_{b} \Delta^{0,1}_{\chi}})$. Finally, for the large time asymptotics, one can adapt the methods of Seeley to show
\begin{displaymath}
	\tr(e^{-t Q_{b} \Delta^{0,1}_{\chi}})=O(e^{-tb}),
\end{displaymath}
with a uniform $O$ term on $\Omega$ (after again possibly restricting). This makes use of the resolvent kernel, as constructed by Seeley. These considerations, combined with the explicit expression for $\zeta^{\prime}_{b,\chi}(0)$ provided by \cite[Thm. 11.1]{CM}, prove the statement of the lemma.
\end{proof}

\begin{proposition}\label{prop:CM-torsion}
Let $c>b>0$. We have an equality of holomorphic sections on $U_{b}\cap U_{c}$ 
\begin{displaymath}
	\tau(b)\exp(-\zeta^{\prime}_{b}(0))=\tau(c)\exp(-\zeta^{\prime}_{c}(0)).
\end{displaymath}
\end{proposition}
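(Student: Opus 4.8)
The plan is to verify the asserted identity pointwise on $U_{b}\cap U_{c}$. Both $\tau(b)\exp(-\zeta'_{b}(0))$ and $\tau(c)\exp(-\zeta'_{c}(0))$ are holomorphic sections of $\det R\pi_{\ast}(\Lcal)\otimes\det R\pi^{c}_{\ast}(\Lcal^{c})$ over $U_{b}\cap U_{c}$: the transition factors $\tau(b),\tau(c)$ are holomorphic trivializations by Corollary \ref{cor:local-triv-CM}, and the factors $\exp(-\zeta'_{b}(0)),\exp(-\zeta'_{c}(0))$ are holomorphic by the Lemma immediately preceding the statement. Hence it is enough to establish the equality at each fixed $\chi\in U_{b}\cap U_{c}$.

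Fix such a $\chi$. Since $c>b$ we have $\mathscr{V}^{0,\bullet}_{b}\subset\mathscr{V}^{0,\bullet}_{c}$, and the generalized eigenspaces of $\Delta^{0,1}_{\chi}$ whose eigenvalues have real part in $(b,c)$ are exactly the finite rank block $\mathscr{V}^{0,1}_{(b,c),\chi}$; the spectral projectors decompose as $Q_{b}=Q_{c}+P_{(b,c)}$. Using the horizontal parabola spectral picture of \cite[Lemma 4.1]{CM}, the heat semigroup of $Q_{b}\Delta^{0,1}_{\chi}$ splits as the direct sum of the semigroups attached to $Q_{c}$ and to the finite block $P_{(b,c)}$, so the Mellin transforms add:
\[
\zeta_{b,\chi}(s)=\zeta_{c,\chi}(s)+\zeta_{(b,c),\chi}(s),\qquad \zeta_{(b,c),\chi}(s)=\sum_{b<\Real\lambda<c}\lambda^{-s}.
\]
The last sum is finite, hence entire, and the zeta determinant of a finite rank operator equals its ordinary determinant, giving $\zeta'_{b,\chi}(0)-\zeta'_{c,\chi}(0)=\zeta'_{(b,c),\chi}(0)=-\log\det\Delta^{0,1}_{(b,c),\chi}$, that is
\[
\exp(-\zeta'_{b,\chi}(0))=\det\Delta^{0,1}_{(b,c),\chi}\cdot\exp(-\zeta'_{c,\chi}(0)).
\]

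On the trivialization side, Corollary \ref{cor:local-triv-CM} records how passing from the cutoff $c$ to the smaller cutoff $b$ removes the acyclic block $\mathscr{V}^{0,\bullet}_{(b,c)}$. The determinant of this block carries two natural trivializations, one induced by $\ov\partial_{X}$ through $\beta_{b}$ and one by the adjoint $\ov\partial^{\sharp}_{X}$ through $\beta^{c}_{b}$, and these differ precisely by $\det\Delta^{0,1}_{(b,c),\chi}$; because $\ov\partial^{\sharp}_{X}$ enters $\beta^{c}_{b}$ in the dual (homological) role, the resulting transition factor for $\tau$ is the \emph{reciprocal} of the one appearing in the zeta computation, namely $\tau(b)=\tau(c)\,(\det\Delta^{0,1}_{(b,c),\chi})^{-1}$. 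Multiplying the two displayed relations, the finite rank determinant $\det\Delta^{0,1}_{(b,c),\chi}$ cancels and we obtain $\tau(b)\exp(-\zeta'_{b}(0))=\tau(c)\exp(-\zeta'_{c}(0))$ at $\chi$, hence everywhere on $U_{b}\cap U_{c}$.

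I expect the genuine difficulty to be the justification of the additivity of the spectral zeta functions for the \emph{non--self-adjoint} operators $\Delta^{0,1}_{\chi}$: one needs the generalized eigenspace decomposition at the cutoffs to be $\Delta^{0,1}_{\chi}$--invariant with a spectral gap, the splitting of the heat trace and its short/long time asymptotics, the Agmon--angle--independence of $\exp(-\zeta'(0))$, and, most delicately, that the zeta determinant of the finite block is computed with the same branch of the logarithm that produces the holomorphic function $\det\Delta^{0,1}_{(b,c)}$ entering Corollary \ref{cor:local-triv-CM}. All of these are supplied by \cite[Lemma 4.1, Thm.\ 11.1, Eq.\ (3.6)]{CM} together with the preceding Lemma, after which the cancellation above is purely formal.
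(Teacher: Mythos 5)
Your argument is correct and is essentially the paper's own proof, which simply declares the identity "direct" from Corollary \ref{cor:local-triv-CM} and the definition of the spectral zeta functions; you have merely spelled out the additivity $\zeta_{b,\chi}=\zeta_{c,\chi}+\zeta_{(b,c),\chi}$ and the resulting cancellation of the finite block $\det\Delta^{0,1}_{(b,c)}$. Your reading of the transition factor as $\tau(b)=\tau(c)\,(\det\Delta^{0,1}_{(b,c)})^{-1}$ is the one required for the cancellation (the corollary's $\prod_{j=1}^{m}$ is the vestige of the general alternating-exponent formula $\prod_{j}(\det\Delta^{0,j})^{(-1)^{j}j}$, which on a curve reduces to the reciprocal you use), so no gap remains.
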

Hence, such expressions can be glued into a single holomorphic trivialization $\tau$ of $\det R\pi_{\ast}(\Lcal)\otimes\det R\pi_{\ast}(\Lcal^{c})$ on $M_{B}(X)$.
\begin{proof}
The proof is direct after Corollary \ref{cor:local-triv-CM} and the very definition of the spectral zeta functions.
\end{proof}
The proposition motivates the following terminology.
\begin{definition}
\begin{enumerate}
	\item The holomorphic trivialization $\tau$ of $\lambda(\Lcal)\otimes\lambda(\Lcal^{c})$ defined by Proposition \ref{prop:CM-torsion} is called the holomorphic Cappell-Miller torsion. 
	\item The holomorphic logarithm $\LOG$ of $\lambda(\Lcal)\otimes\lambda(\Lcal^{c})$ attached to the holomorphic Cappell-Miller torsion is called the Cappell-Miller logarithm, and written $\LOG_{CM}$.
\end{enumerate}
\end{definition}
\begin{remark}
	By construction, at a given $\chi$, the section $\tau$ coincides with the construction of Cappell-Miller. To sum up, our task so far has been to establish that the Cappell-Miller construction can be put into holomorphic families.
\end{remark}
We can now state the main theorem of this section.
\begin{theorem}\label{thm:Quillen-CM}
The Quillen and the Cappell-Miller logarithms on $\lambda(\Lcal)\otimes_{\CBbb}\lambda(\Lcal^{c})$ coincide.
\end{theorem}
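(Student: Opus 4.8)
The plan is to reduce the statement to an equality of two \emph{global holomorphic} objects on the connected base $M_{B}(X)$, and then to propagate that equality from the unitary locus by analytic continuation, exactly as in the second argument for Theorem~\ref{theorem:Iso-Deligne}. Both logarithms under comparison are attached to holomorphic trivializations of the \emph{same} line bundle $\lambda(\Lcal)\otimes_{\CBbb}\lambda(\Lcal^{c})$ over $M_{B}(X)$: the Cappell--Miller logarithm $\LOG_{CM}$ is by definition the logarithm of the nowhere-vanishing holomorphic section $\tau$ furnished by Proposition~\ref{prop:CM-torsion}, whereas $\exp\circ\LOG_{Q}$ is a holomorphic section of the same bundle, defined over all of $M_{B}(X)$ by Fay's holomorphic extension together with Corollary~\ref{corollary:iso-Deligne}. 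Since $\tau$ is a global trivialization, the quotient
\begin{displaymath}
	g:=\frac{\exp\circ\LOG_{Q}}{\tau}
\end{displaymath}
is a well-defined holomorphic function on $M_{B}(X)$, and the theorem becomes the single identity $g\equiv 1$.

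First I would evaluate $g$ on the unitary locus. For $\chi$ unitary the operators $\Delta^{0,p}_{\chi}$ are self-adjoint, so their generalized eigenvalues are genuine eigenvalues and the spectral zeta functions $\zeta_{b,\chi}$ entering $\tau$ collapse to the ordinary Ray--Singer zeta functions. Combined with the self-adjoint duality, under which one may choose $\eta_{k}(\chi^{-1})=\overline{\eta_{k}(\chi)}$, the construction of $\tau$ specializes to the Ray--Singer analytic torsion coupled with the $L^{2}$-metric, that is, to the Quillen metric; this is precisely the normalization compatibility established in \cite{CM}. By the closing remark of the construction of $\LOG_{Q}$, the Quillen logarithm likewise specializes to $\log\|\cdot\|^{2}_{Q}$ there. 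Hence $\exp\circ\LOG_{Q}=\tau$ on the unitary locus, i.e.\ $g=1$ on it.

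Finally I would invoke analytic continuation. The base $M_{B}(X)=\Hom(\pi_{1}(X,p),\CBbb^{\times})\cong(\CBbb^{\times})^{2g}$ is connected, and its unitary locus $\cong(S^{1})^{2g}$ is a maximal totally real submanifold. A holomorphic function on a connected complex manifold that vanishes on a maximal totally real submanifold vanishes identically, as one sees in local holomorphic coordinates in which the submanifold is the real locus (cf.\ \cite[Lemma~5.12]{FreixasWentworth:15}). Applying this to $g-1$ yields $g\equiv 1$, whence $\exp\circ\LOG_{Q}=\tau=\exp\circ\LOG_{CM}$ as holomorphic sections and therefore $\LOG_{Q}=\LOG_{CM}$ as $\CBbb/2\pi i\,\ZBbb$-valued logarithms. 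The genuinely delicate input, the holomorphic dependence of $\tau$ on $\chi$, has already been secured through Kato's perturbation theory in Proposition~\ref{prop:CM-torsion}; the remaining subtle point, and the one I expect to require the most care, is pinning down the constants so that the Cappell--Miller torsion matches the Ray--Singer/Quillen normalization \emph{exactly} on the unitary locus, which is where \cite{CM} must be used attentively. The totally real continuation step itself is then routine.
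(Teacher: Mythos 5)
Your proposal is correct and follows essentially the same route as the paper: both logarithms are holomorphic trivializations of the same line bundle over $M_{B}(X)$, they agree on the unitary locus by construction (where the Cappell--Miller data collapses to the Ray--Singer torsion and $L^{2}$ metric, i.e.\ the Quillen metric), and the identity propagates to all of $M_{B}(X)$ by the standard analytic continuation argument off a maximal totally real submanifold. The paper's own proof is just a terser statement of exactly these three steps.
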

\begin{proof}
First of all, both logarithms are holomorphic. Second, by construction of the Cappell-Miller torsion and the Quillen logarithm, both coincide on the unitary locus of $M_{B}(X)$, which is a maximal totally real subvariety. Then by a standard argument, they coincide on all of $M_{B}(X)$.
\end{proof}
\begin{remark}
\begin{enumerate}
	\item A consequence of the theorem, together with Theorem \ref{theorem:Iso-Deligne} and Corollary \ref{corollary:iso-Deligne}, is that in dimension 1 and rank 1, the Cappell-Miller torsion enjoys of analogous properties to the holomorphic analytic torsion of Bismut-Gillet-Soul\'e, regarding the Riemann-Roch formula. This answers affirmatively a question of these authors.
	\item From now on, we will refer to $\LOG_{Q}$ as the Quillen-Cappell-Miller logarithm.
\end{enumerate}
\end{remark}

\section{Arithmetic Intersection Theory for Flat Line Bundles}\label{section:AIT}

\subsection{Conjugate pairs of line bundles with logarithms on $\Spec\Ocal_K$}
Let $K$ be a number field with ring of integers $\Ocal_{K}$. We write $S=\Spec\Ocal_K$. An invertible sheaf (or line bundle) $\Lcal$ over $S$ can be equivalently seen as a projective $\Ocal_K$ module of rank 1. We will not make any distinction between both points of view, in order to ease notations. This particularly concerns base change and tensor product.

\begin{definition}
A \emph{conjugate pair of line bundles with logarithms}, or simply a conjugate pair, on $S$ consists in the following data:
\begin{enumerate}
	\item a pair of line bundles $\Lcal$ and $\Lcal^{c}$ over $S$;
	\item for every embedding $\tau\colon K\hookrightarrow\CBbb$, a logarithm $\LOG_{\tau}$ on the one dimensional complex vector space $\Lcal_{\tau}\otimes_{\CBbb}\Lcal^{c}_{\overline{\tau}}$. 
\end{enumerate}
We introduce the notation $\Lcal^{\sharp}$ for the data $(\Lcal,\Lcal^{c},\lbrace\LOG_{\tau}\rbrace_{\tau\colon K\hookrightarrow\CBbb})$.
\end{definition}
Given conjugate pairs $\Lcal^{\sharp}$ and $\Mcal^{\sharp}$, an isomorphism $\varphi^{\sharp}:\Lcal^{\sharp}\rightarrow\Mcal^{\sharp}$ is a pair $(\varphi,\varphi^{c})$ of isomorphisms, $\varphi\colon\Lcal\rightarrow\Mcal$ and $\varphi^{c}\colon\Lcal^{c}\rightarrow\Mcal^{c}$, such that for every $\tau\colon K\hookrightarrow\CBbb$, $\varphi_{\tau}\otimes\varphi^{c}_{\overline{\tau}}$ preserves logarithms. There are standard constructions on conjugate pairs with logarithms, notably tensor product and duality. 
\begin{definition}
The groupoid of conjugate pairs of line bundles with logarithms, denoted $\PIC^{\sharp}(S)$, is defined by:
\begin{enumerate}
	\item[\textbullet]\emph{objects}: conjugate pairs of line bundles with logarithms;
	\item[\textbullet]\emph{morphisms}: isomorphisms of pairs of line bundles with logarithms.
\end{enumerate}
It has the structure of a Picard category. The group of isomorphisms classes of objects is denoted by $\Pic^{\sharp}(S)$ and is called the \emph{arithmetic Picard group of conjugate pairs of line bundles with logarithms}.
\end{definition}

\noindent\textbf{Arithmetic degree.} We proceed to construct an \emph{arithmetic degree map} on $\Pic^{\sharp}(S)$,
\begin{displaymath}
	\deg^{\sharp}:\Pic^{\sharp}(S)\longrightarrow\CBbb/\pi i\,  \ZBbb.
\end{displaymath}
We emphasize that the target group is not $\CBbb/2\pi i\, \ZBbb$, but $\CBbb/\pi i\,  \ZBbb$. Let $\Lcal^{\sharp}$ be a conjugate pair. Given nonvanishing elements $\ell\in\Lcal_{K}$, $\ell^{c}\in\Lcal_{K}^{c}$, the quantity
\begin{displaymath}
	\sum_{\pfrak}\ord_{\pfrak}(\ell\otimes\ell^{c})\log(N\pfrak)-\sum_{\tau:K\hookrightarrow\CBbb}\LOG_{\tau}(\ell_{\tau}\otimes\ell^{c}_{\overline{\tau}})
\end{displaymath}
taken in $\CBbb/\pi i\,  \ZBbb$ does not depend on the choices $\ell$, $\ell^{c}$. Indeed, for $\lambda,\mu\in\ K^{\times}$, the following relations hold in $\CBbb/\pi i\,  \ZBbb$:
 \begin{align}
 	\begin{split}
	\sum_{\pfrak}\ord_{\pfrak}(\lambda\mu)\log(N\pfrak)&-\sum_{\tau:K\hookrightarrow\CBbb}\log(\tau(\lambda)\overline{\tau}(\mu))=\\
	&-\log\left(\prod_{\pfrak}|\lambda|_{\pfrak}\prod_{\tau:K\hookrightarrow\CBbb}\tau(\lambda)\right)
	-\log\left(\prod_{\pfrak}|\mu|_{\pfrak}\prod_{\tau:K\hookrightarrow\CBbb}\overline{\tau}(\mu)\right)\\
	&=-\log(\pm 1)-\log(\pm 1)=0\ .
	\end{split}
\end{align}
 We then conclude by the very definition of logarithm: modulo $2\pi i\, \ZBbb$, and hence modulo $\pi i\,   \ZBbb$, $\LOG_{\tau}$ satisfies
 \begin{displaymath}
 	\LOG_{\tau}((\lambda\ell)_{\tau}\otimes(\mu\ell^{c})_{\overline{\tau}})=\LOG_{\tau}(\tau(\lambda)\overline{\tau}(\mu)\ell_{\tau}\otimes\ell^{c}_{\overline{\tau}})=\log(\tau(\lambda)\overline{\tau}(\mu))+\LOG_{\tau}(\ell_{\tau}\otimes\ell^{c}_{\overline{\tau}}).
 \end{displaymath}

 \begin{remark}
 \begin{enumerate}
 	\item When the field $K$ cannot be embedded into $\RBbb$, the arithmetic degree is well defined in $\CBbb/2\pi i\, \ZBbb$, and the argument in $\RBbb/2\pi\ZBbb$.
	\item In general, to obtain an arithmetic degree with values in $\CBbb/2\pi i\, \ZBbb$, one needs to add to conjugate pairs a positivity condition at real places (or equivalently, an orientation). However, our main goal is to prove an arithmetic Riemann-Roch formula, which relies on the Deligne isomorphism through Theorem \ref{theorem:Iso-Deligne}. As we point out in Remark \ref{remark:iso-Deligne}, this introduces a $\log(\pm 1)$ ambiguity. This is why we do not impose any positivity conditions in this article.
 \end{enumerate}
 \end{remark}
 \begin{example}
 Because a $\ZBbb$ module of rank 1 admits a basis, which is unique up to sign, one proves with ease that the arithmetic degree on $\Pic^{\sharp}(\Spec\ZBbb)$ is an isomorphism:
 \begin{displaymath}
 	\deg^{\sharp}\colon\Pic^{\sharp}(\Spec\ZBbb)\isorightarrow\CBbb/\pi i\,  \ZBbb.
 \end{displaymath}
 \end{example}

 We will need the following functorialities for the Picard groups and the arithmetic degree.
 \begin{proposition}\label{proposition:functorialities-deg}
 Let $F$ be a finite extension of $K$ and put $\Tcal=\Spec\Ocal_{F}$. With respect to the morphism $\pi\colon\Tcal\to S$, the arithmetic Picard groups satisfy covariant and contravariant functorialities:
 \begin{enumerate}
 	\item (Inverse images or pull-backs) Tensor product with $\Ocal_{F}$ induces a morphism
	\begin{displaymath}
		\pi^{\ast}\colon\Pic^{\sharp}( S)\longrightarrow\Pic^{\sharp}(\Tcal).
	\end{displaymath}
	\item (Direct images or push-forwards) The norm down to $\Ocal_{K}$ of a projective $\Ocal_{F}$-module induces a morphism
	\begin{displaymath}
		\pi_{\ast}\colon\Pic^{\sharp}( S)\longrightarrow\Pic^{\sharp}(\Tcal).
	\end{displaymath}
	The arithmetic degree on $\Pic^{\sharp}(\Ocal_{K})$ factors through the push-forward to $\Pic^{\sharp}(\ZBbb)$.
	\item The composition $\pi_{\ast}\pi^{\ast}$ acts as multiplication by $[F:K]$.
 \end{enumerate}
 \end{proposition}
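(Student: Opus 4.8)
The plan is to treat the three functorialities in turn; the substance in each case is to check that the underlying algebraic operation (extension of scalars, respectively the norm) is compatible with the archimedean logarithm data, the module-theoretic statements being standard.

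For the pull-back in (i), I would send $\Lcal^{\sharp}=(\Lcal,\Lcal^{c},\lbrace\LOG_{\tau}\rbrace)$ to the conjugate pair whose line bundles are $\Lcal\otimes_{\Ocal_K}\Ocal_F$ and $\Lcal^{c}\otimes_{\Ocal_K}\Ocal_F$. The only thing requiring attention is the archimedean datum. Any embedding $\sigma\colon F\hookrightarrow\CBbb$ restricts to $\tau=\sigma|_K$, and base change supplies a canonical isomorphism $(\Lcal\otimes_{\Ocal_K}\Ocal_F)\otimes_{\Ocal_F,\sigma}\CBbb\isorightarrow\Lcal_{\tau}$, and likewise for $\Lcal^{c}$ at $\overline{\sigma}$, whose restriction to $K$ is $\overline{\tau}=\overline{\sigma|_K}$ since conjugation commutes with restriction. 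Transporting $\LOG_{\tau}$ through the resulting isomorphism $(\pi^{\ast}\Lcal)_{\sigma}\otimes(\pi^{\ast}\Lcal^{c})_{\overline{\sigma}}\isorightarrow\Lcal_{\tau}\otimes\Lcal^{c}_{\overline{\tau}}$ defines $\LOG_{\sigma}$. Compatibility with isomorphisms, tensor products and duals is then immediate, so $\pi^{\ast}$ is a morphism of Picard categories.

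For the push-forward in (ii), I send a conjugate pair $\Ncal^{\sharp}$ on $\Tcal$ to the pair with line bundles $N_{\Ocal_F/\Ocal_K}(\Ncal)$ and $N_{\Ocal_F/\Ocal_K}(\Ncal^{c})$ on $S$. The key input is the archimedean norm formula: for $\tau\colon K\hookrightarrow\CBbb$ there is a canonical isomorphism $N_{\Ocal_F/\Ocal_K}(\Ncal)_{\tau}\isorightarrow\bigotimes_{\sigma\mid\tau}\Ncal_{\sigma}$, the product running over the $[F:K]$ embeddings $\sigma$ extending $\tau$. Complex conjugation $\sigma\mapsto\overline{\sigma}$ is a bijection between the embeddings above $\tau$ and those above $\overline{\tau}$, so grouping factors gives $(\pi_{\ast}\Ncal)_{\tau}\otimes(\pi_{\ast}\Ncal^{c})_{\overline{\tau}}\isorightarrow\bigotimes_{\sigma\mid\tau}(\Ncal_{\sigma}\otimes\Ncal^{c}_{\overline{\sigma}})$, and summing the given logarithms $\LOG_{\sigma}$ over $\sigma\mid\tau$ defines a logarithm $\LOG_{\tau}$ on the left. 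This is where the main obstacle lies: one must verify that this identification is canonical, that it really pairs $\sigma$ with $\overline{\sigma}$, and that the resulting $\LOG_{\tau}$ is independent of all choices. The assertion that $\deg^{\sharp}$ factors through push-forward to $\Pic^{\sharp}(\ZBbb)$ is then the usual preservation of arithmetic degree: at finite places $\sum_{\pfrak}\ord_{\pfrak}(\cdot)\log(N\pfrak)$ over $\Ocal_K$ matches $\sum_{p}\ord_{p}(N_{K/\QBbb}(\cdot))\log p$ over $\ZBbb$ via $N\pfrak=p^{f_{\pfrak}}$ together with $\ord_{p}(N_{K/\QBbb}(x))=\sum_{\pfrak\mid p}f_{\pfrak}\ord_{\pfrak}(x)$, while at the unique archimedean place the single logarithm over $\ZBbb$ is by construction $\sum_{\tau}\LOG_{\tau}$; hence the two arithmetic degrees agree in $\CBbb/\pi i\,\ZBbb$.

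Finally, for (iii) I would invoke the projection formula $N_{\Ocal_F/\Ocal_K}(\Lcal\otimes_{\Ocal_K}\Ocal_F)\isorightarrow\Lcal^{\otimes[F:K]}$ on the underlying modules. On the logarithm side, each pulled-back $\LOG_{\sigma}$ equals $\LOG_{\tau}$ under the canonical identification of (i), so the recipe of (ii) produces $\sum_{\sigma\mid\tau}\LOG_{\sigma}=[F:K]\,\LOG_{\tau}$, which is exactly the logarithm carried by $(\Lcal^{\sharp})^{\otimes[F:K]}$. Thus $\pi_{\ast}\pi^{\ast}$ is tensoring with the $[F:K]$-th power, i.e.\ multiplication by $[F:K]$ in $\Pic^{\sharp}(S)$. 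Throughout, the only genuinely delicate verification is the archimedean book-keeping of (ii); the finite-place identities and the projection formula are standard facts about the norm functor.
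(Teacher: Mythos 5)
Your argument is correct and is precisely the elementary verification the paper leaves implicit (its proof reads only ``The proof is elementary''): base change of the logarithm along $\sigma\mapsto\sigma|_K$ for pull-back, the archimedean norm decomposition $N_{\Ocal_F/\Ocal_K}(\Ncal)_{\tau}\cong\bigotimes_{\sigma\mid\tau}\Ncal_{\sigma}$ paired via $\sigma\mapsto\overline{\sigma}$ for push-forward, and the projection formula for (iii). You also correctly read the push-forward in (ii) as a map $\Pic^{\sharp}(\Tcal)\to\Pic^{\sharp}(S)$, fixing an evident typo in the statement.
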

 \begin{proof}
 The proof is elementary.
 \end{proof}
 
 
 \subsection{Conjugate pairs of line bundles with connections}
For the rest of this section, we fix a square root of $-1$, $i=\sqrt{-1}\in\CBbb$. Let $\Xcal\rightarrow S$ be an arithmetic surface. By this we mean a regular, irreducible and flat projective scheme over $ S$, with geometrically connected generic fiber $\Xcal_{K}$ of dimension 1. We fix some conventions on complex structures.\\

\paragraph{\textbf{Conventions on complex structures}}
\begin{enumerate}
\item Given an embedding $\tau:K\hookrightarrow\CBbb$, we write $\Xcal_{\tau}$ for the base change of $\Xcal$ to $\CBbb$ through $\tau$. After the choice we made of $\sqrt{-1}$, the set of complex points $\Xcal_{\tau}(\CBbb)$ has a complex structure and is thus a Riemann surface. We call this complex structure the \emph{natural} one. The other complex structure (corresponding to $-i$) is called the \emph{reverse}, \emph{opposite} or \emph{conjugate} one, and as usual we indicate this with a bar: $\overline{\Xcal_{\tau}(\CBbb)}$. With these notations, if $\tau$ is a complex, nonreal, embedding, then $\Xcal_{\overline{\tau}}(\CBbb)$ is canonically biholomorphic to $\overline{\Xcal_{\tau}(\CBbb)}$. 

\item If $\tau$ is a real embedding, we put $\Xcal_{\overline{\tau}}(\CBbb)=\overline{\Xcal_{\tau}(\CBbb)}$ (although $\tau=\overline{\tau}$!). For the natural complex structure on $\Xcal_{\overline{\tau}}(\CBbb)$ we then mean the reverse structure on $\Xcal_{\tau}(\CBbb)$.

\item The same conventions will apply to holomorphic line bundles, and sections of such, over $\Xcal$. For instance, if $\Lcal$ is a line bundle over $\Xcal$ and $\tau$ is a complex, nonreal, embedding, the holomorphic line bundles $\Lcal_{\overline{\tau}}$ on $\Xcal_{\overline{\tau}}(\CBbb)$ and $\overline{\Lcal}_{\tau}$ on $\overline{\Xcal_{\tau}(\CBbb)}$ can be identified, after the identification of $\Xcal_{\overline{\tau}}(\CBbb)$ with $\overline{\Xcal_{\tau}(\CBbb)}$. If $\tau$ is real, then the convention is that $\Lcal_{\overline{\tau}}=\overline{\Lcal}_{\tau}$ on $\Xcal_{\overline{\tau}}(\CBbb)=\overline{\Xcal_{\tau}(\CBbb)}$.
\end{enumerate}
\begin{definition} \label{def:conjugate-pair}
A \emph{conjugate pair of line bundles with connections} on $\Xcal$ consists in the following data:
\begin{enumerate}
	\item two line bundles $\Lcal,\Lcal^{c}$ on $\Xcal$;
	\item holomorphic connections $\nabla_{\tau}$ on the holomorphic line bundles $\Lcal_{\tau}$, with respect to the natural complex structure on $\Xcal_{\tau}(\CBbb)$;
	\item holomorphic connections $\nabla_{\overline{\tau}}^{c}$ on the holomorphic line bundles $\Lcal_{\overline{\tau}}$, with respect to the natural complex structure on $\Xcal_{\overline{\tau}}(\CBbb)$. Observe that by the previous conventions, if $\tau$ is a real embedding, then $\nabla_{\overline{\tau}}^{c}$ is a holomorphic connection on the holomorphic line bundle $\overline{\Lcal}_{\tau}^{c}$ on $\overline{\Xcal_{\tau}(\CBbb)}$.
	\item we impose the following relation: if $\chi_{\tau}$ is the holonomy character of $\pi_{1}(\Xcal_{\tau}(\CBbb),\ast)$ associated to $(\Lcal_{\tau},\nabla_{\tau})$, and $\chi_{\overline{\tau}}^{c}$ is the character associated to $(\Lcal_{\overline{\tau}}^{c},\nabla_{\overline{\tau}}^{c})$, then $\chi_{\overline{\tau}}^{c}=\chi_{\tau}^{-1}$.
\end{enumerate}
We introduce the notation $\Lcal^{\sharp}=((\Lcal,\nabla),(\Lcal^{c},\nabla^{c}))$, with $\nabla=\lbrace\nabla_{\tau}\rbrace_{\tau}$, $\nabla^{c}=\lbrace\nabla_{\tau}^{c}\rbrace_{\tau}$. 
\end{definition}
\begin{remark}
In the definition we do not impose any relationship between $\chi_{\tau}$ and $\chi_{\overline{\tau}}$, in contrast to classical Arakelov geometry. Moreover, we required $\chi_{\overline{\tau}}^{c}=\chi_{\tau}^{-1}$, and not $\chi_{\overline{\tau}}^{c}=\overline{\chi}_{\tau}$. The latter condition only happens in the unitary case, which is the range of application of classical Arakelov geometry.
\end{remark}
There is an obvious notion of isomorphism of conjugate pairs of line bundles with connections. There are also standard operations that can be performed, such as tensor products and duals. Base change is possible as well, for instance by unramified extensions of $K$ (in order to preserve the regularity assumption for arithmetic surfaces).
\begin{definition}
We denote by $\PIC^{\sharp}(\Xcal)$ the groupoid of conjugate pairs of line bundles with connections. It is a Picard category. The group of isomorphism classes is denoted $\Pic^{\sharp}(\Xcal)$ and is called the \emph{Picard group of conjugate pairs of line bundles with connections}.
\end{definition}
Let us now suppose there is a section $\sigma: S\rightarrow\Xcal$. A rigidification along $\sigma$ of a conjugate pair of line bundles with connections $\Lcal^{\sharp}$, is a choice of isomorphisms $\sigma^{\ast}\Lcal\isorightarrow\Ocal_{ S}$ and $\sigma^{\ast}\Lcal^{c}\isorightarrow\Ocal_{ S}$. The previous definitions have obvious counterparts in this setting.
\begin{definition}
Given a section $\sigma: S\rightarrow\Xcal$, we denote by $\PICRIG^{\sharp}(\Xcal,\sigma)$ the groupoid of conjugate pairs of line bundles with connections, rigidified along $\sigma$.
\end{definition}
\begin{remark}\label{remark:rigid}
\begin{enumerate}
	\item Observe that a rigidification of $\Lcal^{\sharp}$ induces rigidifications of $\Lcal_{\tau}$ at $\sigma_{\tau}$ and $\Lcal^{c}_{\overline{\tau}}$ at $\sigma_{\overline{\tau}}$, for $\tau\colon K\hookrightarrow\CBbb$.
	\item A rigidification is unique up to $\Ocal_{K}^{\times}$. Because the norm down to $\QBbb$ of a unit is $\pm 1$, the arithmetic degree is not sensitive to the particular choice of rigidification. 
	\item The Hilbert class field $H$ of $K$ is the maximal unramified abelian extension of $K$, and has the property that any invertible $\Ocal_{K}$-module  becomes trivial after base change to $\Ocal_{H}$. Therefore, after possibly extending the base field to $H$, a rigidification always exists. 
\end{enumerate}
\end{remark}
\paragraph{\textbf{Arithmetic intersection product}} The Deligne pairing and the intersection logarithm constructions allow to define a symmetric bilinear pairing
\begin{displaymath}
 	\PIC^{\sharp}(\Xcal)\times\PIC^{\sharp}(\Xcal)\longrightarrow\PIC^{\sharp}( S).
\end{displaymath}
The construction works as follows. Let $\Lcal^{\sharp}$ and $\Mcal^{\sharp}$ be conjugate pairs of line bundles with connections. We consider the Deligne pairings
$\langle\Lcal,\Mcal\rangle$, $\langle\Lcal^{c},\Mcal^{c}\rangle$.
For every complex embedding $\tau\colon K\hookrightarrow\CBbb$, 
\begin{displaymath}
	\langle\Lcal,\Mcal\rangle_{\tau}\otimes_{\CBbb}\langle\Lcal^{c},\Mcal^{c}\rangle_{\overline{\tau}}
	=\langle\Lcal_{\tau},\Mcal_{\tau}\rangle\otimes_{\CBbb}\langle\Lcal^{c}_{\overline{\tau}},\Mcal^{c}_{\overline{\tau}}\rangle
\end{displaymath}
carries an intersection logarithm $\LOG_{int,\tau}$, build up from the connections defining $\Lcal^{\sharp}$, $\Mcal^{\sharp}$ and intermediate choices of rigidifications (we proved the construction is independent of these choices). We obtain this way a conjugate pair of line bundle with logarithms on $ S$, that we denote $\langle\Lcal^{\sharp},\Mcal^{\sharp}\rangle$. The bilinearity of this pairing is clear, and the symmetry is a consequence of Proposition \ref{prop:int-log-sym}. In terms of this pairing, the \emph{arithmetic intersection product of} $\Lcal^{\sharp}$ and $\Mcal^{\sharp}$ is obtained by taking the arithmetic degree:
\begin{displaymath}
	(\Lcal^{\sharp},\Mcal^{\sharp})=\deg^{\sharp}\langle\Lcal^{\sharp},\Mcal^{\sharp}\rangle\in\CBbb/\pi i\,  \ZBbb.
\end{displaymath}
One of the aims of this section is to prove an arithmetic Riemann-Roch formula that accounts for these arithmetic intersection numbers.\\

\paragraph{\textbf{Argument of the Deligne pairing}} Let $\Lcal^{\sharp}$ and $\Mcal^{\sharp}$ be conjugate pairs of line bundles with connections. By the \emph{argument of the Deligne pairing of $\Lcal^{\sharp}$ and $\Mcal^{\sharp}$} we mean the imaginary part of the intersection product:
\begin{displaymath}
	\arg^{\sharp}\langle\Lcal^{\sharp},\Mcal^{\sharp}\rangle=\Imag(\Lcal^{\sharp},\Mcal^{\sharp})\in\RBbb/\pi\ZBbb.
\end{displaymath} 

\subsection{Mixed arithmetic intersection products}\label{subsec:mixed-prod}
The classical arithmetic Picard group in Arakelov geometry classifies smooth hermitian line bundles, and is denoted $\widehat{\Pic}(\Xcal)$. There is an obvious groupoid version that we denote $\widehat{\PIC}(\Xcal)$. We constructed intersection logarithms between conjugate pairs of rigidified line bundles with connections and hermitian line bundles. With this, we can define a pairing
\begin{displaymath}
	\PICRIG^{\sharp}(\Xcal)\times\widehat{\PIC}(\Xcal)\longrightarrow\PIC^{\sharp}( S)
\end{displaymath}
simply as follows. Given a conjugate pair of line bundles with connections $\Lcal^{\sharp}$, rigidified along $\sigma$, and a hermitian line bundle $\overline{\Mcal}$ on $\Xcal$, we define the Deligne pairing
\begin{displaymath}
	\langle\Lcal^{\sharp},\overline{\Mcal}\rangle
	=(\langle\Lcal,\Mcal\rangle, \langle\Lcal^{c},\Mcal\rangle,\lbrace\LOG_{int,\tau}\rbrace_{\tau}).
\end{displaymath}
We denoted $\LOG_{int,\tau}$ the intersection logarithm on the base change
\begin{displaymath}
	\langle\Lcal,\Mcal\rangle_{\tau}\otimes_{\CBbb}\langle\Lcal^{c},\Mcal\rangle_{\overline{\tau}}
	=\langle\Lcal_{\tau},\Mcal_{\tau}\rangle\otimes_{\CBbb}\langle\Lcal^{c}_{\overline{\tau}},\Mcal_{\overline{\tau}}\rangle,
\end{displaymath}
build up using the connections defining $\Lcal^{\sharp}$ at $\tau$, the rigidifications, and the hermitian metric on $\Mcal$. In terms of this Deligne pairing, we define the mixed arithmetic intersection product
\begin{displaymath}
	(\Lcal^{\sharp},\overline{\Mcal})=\deg^{\sharp}\langle\Lcal^{\sharp},\overline{\Mcal}\rangle\in\CBbb/\pi i\,  \ZBbb.
\end{displaymath}
Because a rigidification is unique up to $\Ocal_{K}^{\times}$, this quantity does not depend on the particular choice of rigidification, but in general it depends on the section.\\

\paragraph{\textbf{Variant in the absence of rigidification}} When a section $\sigma$ is given, but we do not have a rigidification, we may follow the observation made in Remark \ref{remark:rigid} and base change to the Hilbert class field $H$. Observe the base change $\Xcal_{\Ocal_{H}}$ is still an arithmetic surface: because the Hilbert class field $H$ is unramified, the regularity of the scheme is preserved. Let us indicate base changed objects with a prime symbol. Given $\Lcal^{\sharp}$, the base change $\Lcal^{\sharp\prime}$ admits a rigidification, which is unique up to unit. Then, the arithmetic intersection number
\begin{displaymath}
	(\Lcal^{\sharp\prime},\ov{\Mcal}^{\prime})\in\CBbb/\pi i\,  \ZBbb
\end{displaymath}
is defined. Taking into account the functoriality properties of the arithmetic degree (Proposition \ref{proposition:functorialities-deg}), it is more natural to normalize this quantity by $[H:K]$, that is the class number $h_{K}$. We then write
\begin{displaymath}
	(\Lcal^{\sharp},\ov{\Mcal}):=\frac{1}{h_{K}}	(\Lcal^{\sharp\prime},\ov{\Mcal}^{\prime})\in\CBbb/\pi i\,  \ZBbb[1/h_{K}].
\end{displaymath}
In particular, when $K=\QBbb$, or more generally when $h_K=1$, the mixed arithmetic intersection number with values in $\CBbb/\pi i\,  \ZBbb$ is always defined, without any reference to the rigidification (but always depending on the section).

\subsection{Variants over $\RBbb$ and $\CBbb$, argument and periods}\label{subsection:argument} While classical Arakelov geometry over $\RBbb$ or $\CBbb$ cannot produce any interesting numerical invariants (only zero), the present theory  has a nontrivial content over these fields. Let us discuss the case of the base field $\CBbb$. We saw we can still define $\Pic^{\sharp}(\Spec\CBbb)$, and an arithmetic degree $\deg^{\sharp}$, now with values in $i\RBbb/2\pi i\,  \ZBbb$. In the construction, one has to take into account the identity and conjugation embeddings $\CBbb\rightarrow\CBbb$. We denote the imaginary part of $\deg^{\sharp}$ by $\arg^{\sharp}$:
\begin{displaymath}
	\arg^{\sharp}\colon\Pic^{\sharp}(\Spec\CBbb)\longrightarrow\RBbb/2\pi\ZBbb.
\end{displaymath}
Let $X$ be a smooth, proper and geometrically irreducible curve over $\CBbb$. We can also define $\PIC^{\sharp}(X)$ and a Deligne pairing. The argument of the Deligne pairing is still defined:
\begin{displaymath}
	\arg^{\sharp}\langle\Lcal^{\sharp},\Mcal^{\sharp}\rangle\in\RBbb/2\pi\ZBbb.
\end{displaymath}
Similarly there is a well-defined argument of the mixed arithmetic intersection product, between $\PICRIG^{\sharp}(X)$ and $\widehat{\PIC}(X)$.\\

\paragraph{\textbf{Interpretation of the argument}}
Let $X$ be a smooth, projective and irreducible curve over $\CBbb$. To apply the formalism above, we stress $\CBbb$ has to be considered with its identity and conjugation embeddings. Let $L$ be a line bundle on $X$ and $\overline{L}$ the conjugate line bundle on $\overline{X}$. We suppose given holomorphic connections $\nabla_{L}\colon L\rightarrow L\otimes\Omega_{X/\CBbb}^{1}$ and $\nabla_{\overline{L}}\colon\overline{L}\rightarrow\overline{L}\otimes\Omega_{\overline{X}/\CBbb}^{1}$ with \emph{real} holonomy characters. We do not impose any further condition. We choose $L^{c}=L^{\vee}$, and we endow $L^{c}$ and $\overline{L}^{c}$ with the dual connections to $\nabla_{L}$, $\nabla_{\overline{L}}$. This provides an example of conjugate pair of line bundles with connections on $X$, that we write $L^{\sharp}$. Let $M$ be a degree 0 line bundle on $X$, that we endow with its unitary connection. On $\overline{M}$ we put the conjugate connection. In this case we take $M^{c}=M$, with same connections. We proceed to describe
\begin{displaymath}
	\arg^{\sharp}\langle L^{\sharp},M^{\sharp}\rangle\in\RBbb/2\pi\ZBbb.
\end{displaymath} 
We fix a base point $p\in X$ and a trivialization of $L$. Let $\ell$ and $m$ be rational sections of $L$ and $M$. Using the connection $\nabla_{L}$, we lift as usual $\ell$ to $\tilde{\ell}$, on the universal covering. We also lift $\Div m$ to $\widetilde{\Div m}$. For the conjugate datum, we lift $\overline{\ell}$ to $\tilde{\overline{\ell}}$ and $\Div\overline{m}$ to $\widetilde{\Div\overline{m}}$. We will appeal to the explicit description of the intersection logarithm in Section \ref{subsec:mixed-case}, in particular formula \eqref{eq:202}. Because we didn't impose any relation between $\overline{\nabla}_{L}$ and $\nabla_{\overline{L}}$, we cannot conclude with
\begin{displaymath}
	\overline{\tilde{\ell}(\widetilde{\Div m})}=\tilde{\overline{\ell}}(\widetilde{\Div\overline{m}}).
\end{displaymath}
In words, in general ``conjugation does not commute with tilde''. There exists a holomorphic differential form on $\overline{X}$, that we present as $\overline{\theta}'$ for some holomorphic form $\theta'$ on $X$, such that
$\nabla_{\overline{L}}=\overline{\nabla}_{L}+\overline{\theta}'$.
Because both connections are supposed to have real holonomy characters, we see that
\begin{displaymath}
	\exp\left(\int_{\gamma}\overline{\theta}'\right)=\exp\left(\int_{\gamma}\theta'\right).
\end{displaymath}
Hence, the harmonic differential form $\theta=\theta'-\overline{\theta}'$ has periods in $2\pi i\, \ZBbb$. Such differential forms are of course parametrized by $H^{1}(X,2\pi i\, \ZBbb)$, which is a rank $2g$ $\ZBbb$-module. In terms of $\overline{\theta}'$ we have
\begin{displaymath}
	\overline{\tilde{\ell}(\widetilde{\Div m})}=\tilde{\overline{\ell}}(\widetilde{\Div\overline{m}})\exp\left(\int_{\tilde{p}}^{\widetilde{\Div m}}\overline{\theta}'\right).
\end{displaymath}
From this and equation \eqref{eq:202}, we conclude that
\begin{displaymath}
	\arg^{\sharp}\langle L^{\sharp},M^{\sharp}\rangle=-2\Imag\left(\int_{\tilde{p}}^{\widetilde{\Div m}}\overline{\theta}'\right)=\Imag\left(\int_{\tilde{p}}^{\widetilde{\Div m}}\theta\right).
\end{displaymath}
Because $\theta$ has periods in $2\pi i\, \ZBbb$, this quantity does not depend on the choice of lifting $\widetilde{\Div m}$, modulo $2\pi\ZBbb$. Moreover, modulo $2\pi\ZBbb$ it only depends on the rational equivalence class of $\Div m$, namely $M$ itself. And this is again because $\theta$ has periods in $2\pi i\, \ZBbb$. It is also independent of the base point, because $M$ has degree 0. Finally, the connection on $M$ played no role. This is of course in agreement with the properties of the intersection pairings. Therefore, given a degree 0 Weil divisor $D$ on $X$, we have a well defined argument
\begin{displaymath}
	\arg^{\sharp}\langle L^{\sharp},\Ocal(D)\rangle=\Imag\left(\int_{\tilde{p}}^{D}\theta\right)\in\RBbb/2\pi\ZBbb.
\end{displaymath}
Let us write $\theta_{L^\sharp}$ for the harmonic differential form above. We thus have a pairing
\begin{displaymath}
	\begin{split}
		\arg^{\sharp}\colon\PIC^{\sharp}(X)_{re}\times\Pic^{0}(X)(\CBbb)&\longrightarrow\RBbb/2\pi\ZBbb\\
		(L^{\sharp},\Ocal(D))&\longmapsto\arg^{\sharp}\langle L^{\sharp},\Ocal(D)\rangle=\Imag\left(\int_{\tilde{p}}^{D}\theta_{L^{\sharp}}\right),
	\end{split}
\end{displaymath}
where the subscript \emph{re} indicates we restrict to conjugate pairs with real holonomy connections. The values of this pairing are imaginary parts of integer combinations of periods!

There is a variant of this pairing when $M=\Ocal(D)$ has arbitrary degree. In this case one needs to equip $L^{\sharp}$ with a rigidification. Because $L^{c}=L^{\vee}$, it is enough to fix a rigidification for $L$. For the argument, one needs to fix a hermitian metric on $M$ and use the mixed intersection pairing. The final formula looks exactly the same. While the result will not depend on the metric on $M$, it depends on the base point (since $\deg D\neq 0$). If we had chosen unrelated rigidifications for $L$ and $L^{c}$, the result would have depended on these choices, as well.

\begin{remark}
There is no simple formula for the general case of an arbitrary conjugate pair $L^{\sharp}$. 
\end{remark}

\subsection{Arithmetic Riemann-Roch theorem}
Let $\pi\colon\Xcal\rightarrow\Spec\Ocal_K$ be an arithmetic surface with a section $\sigma\colon S\rightarrow\Xcal$. Let $\Lcal^{\sharp}$ be a rigidified pair of conjugate line bundles with connections. Recall the notation $\lambda(\Lcal)$ for $\det R\pi_{\ast}(\Lcal)$. It is compatible with base change. Following the construction of Section \ref{section:LOG-det-coh}, for every $\tau$ there is a Quillen-Cappell-Miller logarithm $\LOG_{Q,\tau}$ on
\begin{displaymath}
	\lambda(\Lcal_{\tau})\otimes_{\CBbb}\lambda(\Lcal^{c}_{\overline{\tau}})=\det H^{\bullet}(\Xcal_{\tau}(\CBbb),\Lcal_{\tau})\otimes\det H^{\bullet}(\Xcal_{\overline{\tau}}(\CBbb),\Lcal^{c}_{\overline{\tau}}).
\end{displaymath}
We introduce the conjugate pair of line bundles with logarithms on $ S$
\begin{displaymath}
	\lambda(\Lcal^{\sharp})_{Q}=(\lambda(\Lcal),\lambda(\Lcal^{c}),\lbrace\LOG_{Q,\tau}\rbrace_{\tau}).
\end{displaymath}
Notice the construction of the Quillen-Cappell-Miller logarithm requires the rigidification, in order to identify $\Lcal_{\tau}$ to $\Lcal_{\chi_{\tau}}$ and $\Lcal_{\overline{\tau}}^{c}$ to $\Lcal^{c}_{\chi_{\tau}}$.
\begin{theorem}\label{theorem:arithmetic-RR}
Let us endow the relative dualizing sheaf $\omega_{\Xcal/S}$ with a smooth hermitian metric. Let $\Lcal^{\sharp}$ be a rigidified conjugate pair of line bundles with connections. There is an equality in $\CBbb/\pi i\,  \ZBbb$
\begin{align}
	\begin{split} \label{eq:ARR-flat}
	12\deg^{\sharp}\lambda(\Lcal^{\sharp})_{Q}-2\delta
	&=2(\overline{\omega}_{\Xcal/S},\overline{\omega}_{\Xcal/S})+6(\Lcal^{\sharp},\Lcal^{\sharp})
	-6(\Lcal^{\sharp},\overline{\omega}_{\Xcal/S})
\\
	&	-(4g-4)[K:\QBbb]
	\left(\frac{\zeta'(-1)}{\zeta(-1)}+\frac{1}{2}\right),
	\end{split}
\end{align}
where $\delta=\sum_{\pfrak}n_{\pfrak}\log (N\pfrak)$ is the ``Artin conductor" measuring the bad reduction of $\Xcal\rightarrow\Spec\Ocal_K$. If $K$ does not admit any real embeddings, then the equality already holds in $\CBbb/2\pi i\, \ZBbb$.
\end{theorem}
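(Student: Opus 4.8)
The plan is to evaluate $\deg^{\sharp}\lambda(\Lcal^{\sharp})_{Q}$ by transporting it across Deligne's isomorphism $\Dcal'$ and matching the outcome against the classical Gillet-Soul\'e arithmetic Riemann-Roch theorem. First I would factor $\lambda(\Lcal^{\sharp})=\lambda(\Lcal^{\sharp}-\Ocal^{\sharp})\otimes\lambda(\Ocal^{\sharp})$, so that the twelfth power decomposes, through the basic isomorphism $\Dcal$ and Mumford's isomorphism, as
\begin{displaymath}
	\lambda(\Lcal)^{\otimes 12}\isorightarrow\langle\omega_{\Xcal/S},\omega_{\Xcal/S}\rangle\otimes\langle\Lcal,\Lcal\otimes\omega_{\Xcal/S}^{-1}\rangle^{\otimes 6}.
\end{displaymath}
Equipping the determinants with the Quillen-Cappell-Miller logarithms and the Deligne pairings with their intersection logarithms makes both sides objects of $\PIC^{\sharp}(S)$. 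I would then split $\deg^{\sharp}$ into its non-archimedean part (the $\sum_{\pfrak}\ord_{\pfrak}\log N\pfrak$ contribution) and its archimedean part (the $-\sum_{\tau}\LOG_{\tau}$ contribution), and treat the two factors $\lambda(\Lcal^{\sharp}-\Ocal^{\sharp})$ and $\lambda(\Ocal^{\sharp})$ separately.

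For the factor $\lambda(\Lcal^{\sharp}-\Ocal^{\sharp})$ the essential input is Theorem \ref{theorem:Iso-Deligne}, together with Corollary \ref{corollary:iso-Deligne} and its family refinement for K\"ahler fibrations: at each embedding $\tau$ the Quillen-Cappell-Miller logarithm on $\lambda(\Lcal_{\tau}-\Ocal)\otimes\lambda(\Lcal^{c}_{\overline{\tau}}-\Ocal)$ is carried by $\Dcal$ to the intersection logarithm on $\langle\Lcal_{\tau},\Lcal_{\tau}\otimes\omega^{-1}\rangle\otimes\langle\Lcal^{c}_{\overline{\tau}},\Lcal^{c}_{\overline{\tau}}\otimes\omega^{-1}\rangle$, modulo $\pi i\,\ZBbb$. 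Since the basic Deligne isomorphism is an honest isomorphism of line bundles over $\Spec\Ocal_K$ (no integral discrepancy and no archimedean constant), reassembling the archimedean identities with the finite parts yields precisely $6(\Lcal^{\sharp},\Lcal^{\sharp})-6(\Lcal^{\sharp},\overline{\omega})$ after using bilinearity $\langle\Lcal,\Lcal\otimes\omega^{-1}\rangle=\langle\Lcal,\Lcal\rangle\otimes\langle\Lcal,\omega\rangle^{-1}$. The decisive point is that this step uses no unitarity of $\nabla$; this is exactly where the logarithm formalism pays off, and it shows that the residual quantity
\begin{displaymath}
	12\deg^{\sharp}\lambda(\Lcal^{\sharp})_{Q}-6(\Lcal^{\sharp},\Lcal^{\sharp})+6(\Lcal^{\sharp},\overline{\omega})
\end{displaymath}
is the contribution of the trivial factor $\lambda(\Ocal^{\sharp})$ alone and is independent of the flat connections.

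The trivial factor is then entirely classical. The conjugate pair $\Ocal^{\sharp}$ carries the ordinary Quillen metric (as recorded at the end of Section \ref{section:LOG-det-coh}), so that $12\deg^{\sharp}\lambda(\Ocal^{\sharp})_{Q}$ equals twice the classical Gillet-Soul\'e expression $12\widehat{\deg}\,\lambda(\overline{\Ocal})_{Q}$, the factor two accounting for the two holomorphic structures $\Xcal_{\tau}(\CBbb)$ and $\Xcal_{\overline{\tau}}(\CBbb)$ built into the conjugate pair. The integral Mumford isomorphism $\lambda(\Ocal)^{\otimes 12}\isorightarrow\langle\omega_{\Xcal/S},\omega_{\Xcal/S}\rangle$ fails to be an isomorphism of integral models precisely by the Artin conductor, contributing $2\delta$ (the same conductor for $\Lcal$ and $\Lcal^{c}$, which live on the same arithmetic surface), while the archimedean discrepancy between the Quillen metric of $\Ocal$ and the metric induced by Mumford's isomorphism is the universal constant recorded by Gillet-Soul\'e, Soul\'e and Bost. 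Importing these two classical facts gives
\begin{displaymath}
	12\deg^{\sharp}\lambda(\Ocal^{\sharp})_{Q}-2\delta=2(\overline{\omega},\overline{\omega})-(4g-4)[K:\QBbb]\left(\frac{\zeta'(-1)}{\zeta(-1)}+\frac{1}{2}\right),
\end{displaymath}
and adding back the $\Lcal$-dependent terms produces the asserted formula.

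Finally I would handle the two modular refinements. The reduction modulo $\pi i\,\ZBbb$ rather than $2\pi i\,\ZBbb$ is forced by the sign indeterminacy of Deligne's isomorphism, which as in Remark \ref{remark:iso-Deligne} introduces a $\log(\pm 1)$; this is exactly the ambiguity that $\deg^{\sharp}$, valued in $\CBbb/\pi i\,\ZBbb$, tolerates. When $K$ admits no real embedding, the real places---the only source of the $\pm 1$ at the level of the arithmetic degree---are absent, and the identity lifts to $\CBbb/2\pi i\,\ZBbb$. I expect the principal obstacle to lie entirely in the non-archimedean bookkeeping: precisely identifying the $\delta$-discrepancy of the integral Mumford isomorphism, verifying its doubling under the conjugate-pair formalism, and importing the classical constant with the exact normalization $(4g-4)$ and $+\tfrac{1}{2}$. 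By contrast, the genuinely new difficulty, namely the non-unitarity of the connections at the archimedean places, is dispatched wholesale by Theorem \ref{theorem:Iso-Deligne}.
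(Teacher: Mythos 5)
Your proposal follows essentially the same route as the paper's proof: decompose $\lambda(\Lcal^{\sharp})^{\otimes 12}$ via the Deligne and Mumford isomorphisms over the integral model, handle the flat-connection factor at the archimedean places with Theorem \ref{theorem:Iso-Deligne} (equivalently Corollary \ref{corollary:iso-Deligne}), and reduce the trivial factor to the classical Gillet--Soul\'e arithmetic Riemann--Roch theorem applied twice, with the Deligne discriminant supplying $2\delta$ and the $(4g-4)$ constant coming from the doubled quasi-isometry norm. The only cosmetic difference is that the paper phrases the final lift to $\CBbb/2\pi i\,\ZBbb$ via the evenness of the number of embeddings (which kills the global $\log(\pm 1)$ from Deligne's isomorphism) rather than solely via the absence of real places in the degree map, but the content is the same.
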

\begin{remark}
The mixed arithmetic intersection product $(\Lcal^{\sharp},\overline{\omega}_{\Xcal/S})$ involves the rigidification, and depends on it. This is in agreement with the dependence of the Quillen logarithm on the rigidification. Nevertheless, it does not depend on the choice of metric on $\omega_{\Xcal/S}$, by Lemma \ref{lemma:chern-on-M}. Therefore, on the right hand side of the formula, the dependence in the metric on $\omega_{\Xcal/S}$ comes from $(\overline{\omega}_{\Xcal/S},\overline{\omega}_{\Xcal/S})$.
\end{remark}
\begin{proof}[Proof of Theorem \ref{theorem:arithmetic-RR}]
The theorem is derived as a combination of the following statements:
\begin{enumerate}
	\item the Deligne isomorphism applied to $\Xcal\rightarrow S$, $\Lcal$, $\Lcal^{c}$ and $\Ocal_{\Xcal}$, and its compatibility to base change under $\tau:K\hookrightarrow \CBbb$;
	\item the arithmetic Riemann-Roch theorem of Gillet-Soul\'e \cite{GS:ARR} applied twice to $\Ocal_{\Xcal}$ in Deligne's functorial formulation \cite{Deligne:87, Soule}, which guarantees a quasi-isometry
	\begin{displaymath}
		\lambda(\Ocal_{\Xcal})_{Q}^{\otimes 12}\otimes\Ocal(-\Delta)\isorightarrow
		\langle\overline{\omega}_{\Xcal/ S},\overline{\omega}_{\Xcal/ S}\rangle,
	\end{displaymath}
	with norm $\exp((2g-2)(\zeta'(-1)/\zeta(-1)+1/2))$. The index $Q$ stands for the Quillen metric (for the trivial hermitian line bundle in this case), $\Delta$ is the so-called Deligne discriminant supported on finite primes, and $\Ocal(\Delta)$ is endowed with the trivial metric (then $\delta$ is the arithmetic degree of $\Ocal(\Delta)$). It is related to Artin's conductor through work of T. Saito \cite{Saito};
	\item the fact that our definition of $\LOG_Q$ for the trivial hermitian line bundle amounts to the Quillen metric;
	\item Theorem \ref{theorem:Iso-Deligne} applied to $\Xcal_{\tau}(\CBbb)$, $\Lcal_{\chi_{\tau}}$, $\Lcal_{\chi_{\tau}}^{c}$;
	\item the use of the connections and rigidifications in order to identify $\Lcal_{\tau}$ to $\Lcal_{\chi_{\tau}}$ and $\Lcal_{\overline{\tau}}^{c}$ to $\Lcal_{\chi_{\tau}}^{c}$, plus the compatibility of Deligne's isomorphism to isomorphisms of line bundles.
\end{enumerate}
This provides a statement in a finer form, at the level of $\PIC^{\sharp}( S)$. We conclude by applying the arithmetic degree $\deg^{\sharp}$. For the last claim, it is enough to observe first that the arithmetic intersection numbers are well defined in $\CBbb/2\pi i\, \ZBbb$, and that the sign ambiguity in Deligne's isomorphism disappears, since there is an even number of different embeddings from $K$ into $\CBbb$.
\end{proof}

\paragraph{\textbf{Variant in the absence of rigidification}} In practical situations, while a section $\sigma$ of $\pi\colon\Xcal\to S$ may be given, a natural choice of rigidification may not. As we explained in Remark \ref{remark:rigid} and in Section \ref{subsec:mixed-prod}, this can be remedied by base changing to the Hilbert class field of $K$. For instance, we justified that mixed intersection products $(\Lcal^{\sharp},\ov{\Mcal})$ are naturally defined in $\CBbb/\pi i\,  \ZBbb[1/h_{K}]$. For the determinant of cohomology $\lambda(\Lcal^{\sharp})$ it is even simpler, since the rigidification is only needed in the construction of the logarithms, which happen on the archimedean places. Clearly, $\lambda(\Lcal^{\sharp})$ can be defined over $\Ocal_{K}$ if it is defined after base change to $\Ocal_{H}$.
\begin{corollary}
Let $\Xcal\to S$ be an arithmetic surface with $\sigma\colon S\to\Xcal$ a given section. Fix a hermitian metric on $\omega_{\Xcal/ S}$. Let $\Lcal^{\sharp}$ be a conjugate pair of line bundles with connections. Then, the formula \eqref{eq:ARR-flat} holds with values in $\CBbb/\pi i\,  \ZBbb[1/h_{K}]$, where $h_K$ is the class number of $K$.
\end{corollary}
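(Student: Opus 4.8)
The plan is to deduce the formula over $\Ocal_K$ from Theorem \ref{theorem:arithmetic-RR} applied over the Hilbert class field, exploiting the functoriality of the arithmetic degree under the associated base change. Write $H$ for the Hilbert class field of $K$, $\Tcal=\Spec\Ocal_H$, and $\pi\colon\Tcal\to S$ for the structure morphism. Since $H/K$ is unramified, $\Xcal_{\Ocal_H}\to\Tcal$ is again an arithmetic surface, and by Remark \ref{remark:rigid} the base-changed conjugate pair $\Lcal^{\sharp\prime}$ admits a rigidification along $\sigma'=\pi^{\ast}\sigma$, unique up to $\Ocal_H^{\times}$. As the arithmetic degree is insensitive to units (the choice affects it only through $\log(\pm1)$, which vanishes in $\CBbb/\pi i\,\ZBbb$), every quantity in \eqref{eq:ARR-flat} for $\Lcal^{\sharp\prime}$ over $\Ocal_H$ is well defined, and Theorem \ref{theorem:arithmetic-RR} gives, in $\CBbb/\pi i\,\ZBbb$,
\[
12\deg^{\sharp}\lambda(\Lcal^{\sharp\prime})_{Q}-2\delta'
=2(\ov\omega',\ov\omega')+6(\Lcal^{\sharp\prime},\Lcal^{\sharp\prime})-6(\Lcal^{\sharp\prime},\ov\omega')
-(4g-4)[H:\QBbb]\Bigl(\tfrac{\zeta'(-1)}{\zeta(-1)}+\tfrac12\Bigr),
\]
where primes denote the evident base changes to $\Ocal_H$.

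Next I would recall that the unrigidified quantities over $\Ocal_K$ are \emph{defined} by exactly this base change: following Section \ref{subsec:mixed-prod} one sets $(\Lcal^{\sharp},\ov\omega):=\tfrac1{h_K}(\Lcal^{\sharp\prime},\ov\omega')$ and $\deg^{\sharp}\lambda(\Lcal^{\sharp})_Q:=\tfrac1{h_K}\deg^{\sharp}\lambda(\Lcal^{\sharp\prime})_Q$, both in $\CBbb/\pi i\,\ZBbb[1/h_K]$, which is legitimate because the only ingredient requiring a rigidification is the archimedean Quillen--Cappell--Miller, respectively mixed, intersection logarithm. The remaining quantities are already defined over $\Ocal_K$: the term $(\Lcal^{\sharp},\Lcal^{\sharp})$ by Proposition \ref{prop:non-rig-LOG}, since both entries carry flat connections and $\LOG_{int}$ is then independent of the section and the rigidification, and $(\ov\omega,\ov\omega)$ classically. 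Thus both sides of the asserted identity make sense over $\Ocal_K$ with values in $\CBbb/\pi i\,\ZBbb[1/h_K]$, and it remains to match them.

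The matching is a term-by-term base change computation. The determinant of cohomology, the Deligne pairing, and all the archimedean logarithms are compatible with $\pi$: at a place $\sigma\colon H\hookrightarrow\CBbb$ lying over $\tau\colon K\hookrightarrow\CBbb$ the complex fibre $\Xcal_\sigma(\CBbb)$, the line bundles, and their connections coincide with those attached to $\tau$, so the logarithms at $\sigma$ equal those at $\tau$. Consequently $\lambda(\Lcal^{\sharp\prime})_Q=\pi^{\ast}\lambda(\Lcal^{\sharp})_Q$, $\langle\Lcal^{\sharp\prime},\Lcal^{\sharp\prime}\rangle=\pi^{\ast}\langle\Lcal^{\sharp},\Lcal^{\sharp}\rangle$, and similarly for the mixed and hermitian pairings. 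Since $\deg^{\sharp}$ factors through the pushforward to $\Pic^{\sharp}(\Spec\ZBbb)$ and $\pi_{\ast}\pi^{\ast}$ is multiplication by $[H:K]=h_K$ (Proposition \ref{proposition:functorialities-deg}), each pulled-back class has arithmetic degree equal to $h_K$ times its degree over $\Ocal_K$; this handles the degree term and the three intersection numbers. For the finite part, $\delta'=h_K\,\delta$ because $H/K$ is unramified, so that the local conductor exponents are unchanged while for each prime $\pfrak$ of $K$ the residue degrees of the primes $\Pfrak\mid\pfrak$ sum to $[H:K]$; and $[H:\QBbb]=h_K[K:\QBbb]$ while the generic genus $g$ is unchanged. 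Hence the displayed $\Ocal_H$-identity is precisely $h_K$ times the asserted $\Ocal_K$-formula, and dividing by $h_K$ — which replaces the lattice $\pi i\,\ZBbb$ by $\pi i\,\ZBbb[1/h_K]$ — yields \eqref{eq:ARR-flat} over $\Ocal_K$ in $\CBbb/\pi i\,\ZBbb[1/h_K]$.

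The main obstacle is bookkeeping rather than conceptual: one must verify that every ingredient of the finer $\PIC^{\sharp}$-level statement underlying Theorem \ref{theorem:arithmetic-RR} — Deligne's and Mumford's isomorphisms, the intersection and Quillen--Cappell--Miller logarithms, and the rigidification — is compatible with $\pi$, so that the $\Ocal_H$-statement is genuinely $\pi^{\ast}$ of data over $\Ocal_K$, up to the unit ambiguity that dies in $\CBbb/\pi i\,\ZBbb$. The one point that is not purely formal is the base-change behaviour $\delta'=h_K\delta$ of the Deligne discriminant; this follows from the compatibility of Mumford's isomorphism with base change together with the invariance of the local conductor exponents under the unramified extension $H/K$.
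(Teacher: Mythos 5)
Your argument is correct and follows exactly the route the paper takes: its own proof is the one-line observation that one base changes to the Hilbert class field, applies Theorem \ref{theorem:arithmetic-RR} there, and uses the functoriality of the arithmetic degree (Proposition \ref{proposition:functorialities-deg}) together with the compatibility of the determinant of cohomology with base change. Your additional bookkeeping — the identities $\deg^{\sharp}\circ\pi^{\ast}=h_K\deg^{\sharp}$, $\delta'=h_K\delta$ for the unramified extension, and $[H:\QBbb]=h_K[K:\QBbb]$ — is a correct expansion of what the paper leaves implicit.
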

\begin{proof}
After Theorem \ref{theorem:arithmetic-RR}, it is enough to base change to the Hilbert class field, and use the functoriality of the arithmetic degree and the compatibility of the determinant of cohomology with base change.
\end{proof}

\paragraph{\textbf{Variant over $\Spec\CBbb$}} There is an interesting version of Theorem \ref{theorem:arithmetic-RR} when the base scheme $\Spec\CBbb$, when the argument is still well defined and with values in $\RBbb/2\pi\ZBbb$. The formula dramatically simplifies:
\begin{theorem}[{\sc Argument of arithmetic Riemann-Roch}]\label{theorem:argument-ARR}
When the base scheme is $\Spec\CBbb$, there is the following equality of arguments in $\RBbb/2\pi\ZBbb$:
\begin{displaymath}
	12\arg^{\sharp}\lambda(\Lcal^{\sharp})_{Q}=6\arg^{\sharp}\langle\Lcal^{\sharp},\Lcal^{\sharp}\rangle-6\arg^{\sharp}\langle\Lcal^{\sharp},\overline{\omega}_{\Xcal/S}\rangle.
\end{displaymath}
\end{theorem}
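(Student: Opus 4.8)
The plan is to read off the formula from the general arithmetic Riemann--Roch identity of Theorem~\ref{theorem:arithmetic-RR} by specializing the base to $\Spec\CBbb$ and then taking imaginary parts; no new analysis is required. All the substance already lives in the $\PIC^{\sharp}$-level isomorphism underlying that theorem, which combines Deligne's isomorphism~\eqref{eqn:deligne} for the conjugate pair $(\Lcal,\Lcal^{c})$, Mumford's isomorphism, the Gillet--Soul\'e quasi-isometry for $\Ocal_{\Xcal}$, and Theorem~\ref{theorem:Iso-Deligne}, which matches $\LOG_{Q}$ with $\LOG_{int}$ modulo $\pi i\,\ZBbb$. Evaluated over $\Spec\CBbb$, taken as in Section~\ref{subsection:argument} with its identity and conjugation embeddings, this reproduces the relation of Theorem~\ref{theorem:arithmetic-RR}, except that the finite-prime terms $\delta$ and the Deligne discriminant are simply absent because $\Spec\CBbb$ has no finite places.

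The argument $\arg^{\sharp}=\Imag\circ\deg^{\sharp}$ is additive, so I would apply it to this specialized identity term by term. The decisive point is that two of the terms are real and are therefore killed by $\Imag$. First, the hermitian self-intersection $(\overline{\omega}_{\Xcal/S},\overline{\omega}_{\Xcal/S})$ is a classical Arakelov number: the connection on $\omega_{\Xcal/S}$ is a Chern connection, so by the unitary case of Section~\ref{subsec:real-families} its intersection logarithm is the logarithm of a smooth hermitian metric, hence real-valued, and $\arg^{\sharp}$ annihilates it. Second, the analytic-torsion constant attached to $\Ocal_{\Xcal}$ (the multiple of $\zeta'(-1)/\zeta(-1)+1/2$) is a positive real number, whose logarithm is again real. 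What remains of the identity is exactly
\begin{displaymath}
	12\arg^{\sharp}\lambda(\Lcal^{\sharp})_{Q}=6\arg^{\sharp}\langle\Lcal^{\sharp},\Lcal^{\sharp}\rangle-6\arg^{\sharp}\langle\Lcal^{\sharp},\overline{\omega}_{\Xcal/S}\rangle,
\end{displaymath}
the remaining three terms being genuine period-type quantities, in accordance with the fact (Section~\ref{subsection:argument}) that over $\Spec\CBbb$ the degree $\deg^{\sharp}$ takes values in $i\RBbb/2\pi i\,\ZBbb$ and is thus recovered entirely by its argument.

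Finally I would settle the modular bookkeeping. The matching of Theorem~\ref{theorem:Iso-Deligne} holds only modulo $\pi i\,\ZBbb$ because of the sign indeterminacy of Deligne's isomorphism (Remark~\ref{remark:iso-Deligne}). Over $\Spec\CBbb$, however, there are exactly two embeddings, an even number, so the two $\log(\pm1)$ contributions combine to an element of $2\pi i\,\ZBbb$ and the isomorphism becomes canonical modulo $2\pi i\,\ZBbb$; taking imaginary parts then places the equality in $\RBbb/2\pi\ZBbb$, as asserted. The main obstacle is precisely this reality bookkeeping: one must verify that the surviving contributions under $\Imag$ are exactly the period quantities, i.e.\ that the real parts of the intersection logarithms cancel while their imaginary parts persist. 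This reduces to checking the explicit formulas of Section~\ref{subsec:real-families} and formula~\eqref{eq:202} against the interchange of the two embeddings; once that dichotomy is in place, the theorem drops out of Theorem~\ref{theorem:arithmetic-RR} in a single line.
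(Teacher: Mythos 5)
Your proposal is correct and follows essentially the same route the paper intends: Theorem \ref{theorem:argument-ARR} is presented there as a direct specialization of the $\PIC^{\sharp}$-level identity behind Theorem \ref{theorem:arithmetic-RR} to the base $\Spec\CBbb$, where the finite-place terms disappear and the remaining real contributions (the hermitian self-intersection of $\overline{\omega}_{\Xcal/S}$ and the $\zeta'(-1)/\zeta(-1)+\tfrac12$ constant) are annihilated by $\Imag$. Your bookkeeping of the sign ambiguity via the even number of embeddings, landing the equality in $\RBbb/2\pi\ZBbb$, likewise matches the argument given at the end of the proof of Theorem \ref{theorem:arithmetic-RR}.
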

\begin{example}
Let $X$ be a compact Riemann surface with a fixed base point $p$. Let $L^{\sharp}$ be a conjugate pair of rigidified line bundles with connections. Assume the connections have real holonomies, that $L^{c}=L^{\vee}$ and the rigidification is induced by a trivialization of $L$ alone. Because we are in the real holonomy case, the explicit description of the intersection logarithm in Section \ref{subsec:real-holonomies} shows that $\arg^{\sharp}\langle L^{\sharp}, L^{\sharp}\rangle=0$.
For the other intersection product, recall we saw in Section \ref{subsection:argument} that $L^{\sharp}$ determines a harmonic differential form $\theta_{L^{\sharp}}$ with periods in $2\pi i\, \ZBbb$. Then, if $\omega_{X/\CBbb}=\Ocal(K)$ for some canonical divisor $K$, we have
\begin{displaymath}
	\arg^{\sharp} \langle L^{\sharp},\omega_{X/\CBbb}\rangle=\Imag\left(\int_{\tilde{p}}^{K}\theta_{L^{\sharp}}\right).
\end{displaymath}
Now the argument of the arithmetic Riemann-Roch theorem in this particular case specializes to
\begin{displaymath}
	12\arg^{\sharp}\lambda(L^{\sharp})_{Q}=-6\Imag\left(\int_{\tilde{p}}^{K}\theta_{L^{\sharp}}\right)\quad\text{in }\RBbb/2\pi\ZBbb.
\end{displaymath}
This can be seen as an anomaly formula for the imaginary part of the Quillen-Cappell-Miller logarithm, under a change of connection (within the real holonomy assumption).
\end{example}

\bibliographystyle{amsplain}
\bibliography{biblio}{}

 \end{document}